\documentclass[12pt]{article}

\usepackage[a4paper,margin=30mm]{geometry}
\usepackage{amsmath,amssymb,amsthm,mathtools}
\usepackage[T1]{fontenc}
\usepackage{lmodern}
\usepackage{microtype}
\usepackage[hidelinks]{hyperref}
\usepackage{enumerate}
\usepackage{xcolor}
\usepackage{etoolbox}

\newtheorem{theorem}{Theorem}[section]
\newtheorem{lemma}[theorem]{Lemma}
\newtheorem{proposition}[theorem]{Proposition}
\newtheorem{corollary}[theorem]{Corollary}
\newtheorem{definition}[theorem]{Definition}
\theoremstyle{remark}
\newtheorem{remark}{Remark}

\numberwithin{equation}{section}

\newcommand{\E}{\mathbb E}
\newcommand{\Pp}{\mathbb P}
\newcommand{\one}{\mathbf 1}
\newcommand{\mesh}{\operatorname{mesh}}
\newcommand{\tr}{\operatorname{tr}}
\newcommand{\HS}{\mathrm{HS}}

\usepackage[disable]{todonotes}
\newcommand{\purba}[2][]{\todo[color=yellow!60!white, size=\footnotesize, #1]{PD: #2}}

\title{Quadratic and $p$-th variation of random signed\\
Takagi--Landsberg bridges}
\author{Purba Das\footnote{Department of Mathematics, King's College London. E-mail: purba.das@kcl.ac.uk} \, and \, Alexander Schied\footnote{Department of Statistics and Actuarial Science, University of Waterloo. E-mail: aschied@uwaterloo.ca}}
\date{September 18, 2026}

\begin{document}
\maketitle

\begin{abstract}
We study random signed Takagi--Landsberg bridges with independent
Rademacher Faber--Schauder coefficients. At index $H=1/2$, we prove that
every fixed deterministic refining sequence of partitions with vanishing
mesh yields quadratic variation $t$, almost surely and uniformly in
$t\in[0,1]$. For partitions that need not be refining, the mesh condition
$o(1/\log n)$ is sufficient, and its order is sharp. Our proofs use an
operator representation of the quadratic sums and moment bounds for
Rademacher chaos. We also show that asymmetric signs can destroy this
invariance. At every index $H\ne1/2$, we construct a deterministic refining
sequence along which the critical $p$-power sums, with $p=1/H$, have two
distinct finite accumulation points almost surely. At $H=1/4$, uniform
thirds grids provide an explicit alternative to the dyadic limit.
Thus $H=1/2$ is the unique index in the symmetric family at which critical
variation is invariant along every fixed deterministic refining sequence. Nevertheless the variation index equals $1/H$ almost surely across a large sequence of partitions.
\end{abstract}

\section{Introduction}

Paul L\'evy~\cite{Levy1940} proved that Brownian motion has linear quadratic variation along every fixed deterministic
refining sequence of partitions.  More precisely, let $B$ be standard Brownian motion and,
for every $n$, write
\[
  \pi_n=\{0=t^n_0<t^n_1<\cdots<t^n_{N_n}=T\},\qquad n\in\mathbb N,
\]
for partitions of the time interval $[0,T]$. If $(\pi_n)$ is any fixed deterministic refining sequence with vanishing
mesh, then
\[
  \sum_{i=0}^{N_n-1}(B_{t^n_{i+1}}-B_{t^n_i})^2
  \longrightarrow T
  \qquad\text{almost surely}.
\]
Applying the terminal-time statement at rational times and using
monotonicity yields the familiar pathwise formulation $\langle B\rangle_t=t$
simultaneously for all $t\in[0,T]$.

For L\'evy's later systematic treatment of Brownian sample path properties,
see his 1948 monograph~\cite{Levy1948}.  Wiener~\cite{WienerQV} had earlier
introduced a class of functions with bounded quadratic variation in his
study of Fourier coefficients, although his notion of $2$-variation
differs from the limit along a single partition sequence used here.
Protter~\cite[Chapter~I, Theorem~28]{Protter} gives a short,
modern proof of L\'evy's theorem via the backward martingale convergence theorem.  

For Brownian motion, the condition that the partitions $(\pi_n)$ are nested can be relaxed.  
The conclusion of $L^2$-convergence 
holds whenever the mesh sizes $\mesh(\pi_n)$ converge to zero as $n\uparrow\infty$.  Dudley~\cite[p.~89]{Dudley1973} proved almost-sure
convergence for arbitrary deterministic partitions satisfying
$\mesh(\pi_n)=o(1/\log n)$.  Fern\'andez de la
Vega~\cite{DeLaVega1974} showed that this condition is best possible by constructing a partition sequence with $\mesh(\pi_n)=O(1/\log n)$ for which almost-sure convergence to the usual quadratic variation fails.

The order of the quantifiers in L\'evy's theorem is essential.  It says that
for each fixed deterministic refining sequence there is a probability-one
event on which convergence holds; it does not supply one event working for
all partition sequences simultaneously.  This distinction became
especially important after F\"ollmer~\cite{Follmer} defined
quadratic variation along a prescribed partition sequence as a property of
an individual path and derived the corresponding pathwise It\^o formula.
In that setting the limit can depend strongly on the chosen partitions,
particularly when they are selected after the path is known.  The resulting
problem is to identify conditions on the path, on the partitions, or on
their interaction that restore stability.

The possible dependence of quadratic variation on the choice of a partition sequence is extreme.  Building on observations of L\'evy
and Freedman~\cite[pp.~47--48]{Freedman}, Davis, Ob{\l}\'oj, and
Siorpaes~\cite[Theorem~7.4]{DavisOblojSiorpaes} proved that, for Brownian
motion, suitable random refining partitions can realize any prescribed
jointly measurable nondecreasing process starting from zero as a limit of
quadratic sums.  They also show in Proposition~2.3 that, for optional
partitions whose oscillation along the path tends to zero almost surely,
one can extract
a subsequence along which the canonical semimartingale quadratic variation
is recovered almost surely.  This sharp contrast with the
fixed-deterministic-sequence theorem is another reason to keep the
quantifiers explicit.

Cont and Das give conditions for stability with respect to the partition choice.  In
\cite[Definition~3.2]{ContDasRoughness},  they introduce the notion of quadratic roughness, and their Theorem~4.2 shows that if a path is
$\alpha$-H\"older and quadratically rough along a balanced partition
sequence with coarsening index $\beta\in(0,1\wedge 2\alpha) $,
then its quadratic variation along that sequence exists and agrees with its
dyadic quadratic variation.  For Brownian motion, they prove
in their Theorem~3.4 that, for each $0<\beta<1$, this
roughness property holds almost surely along any fixed balanced sequence
satisfying $(\log n)^2\mesh(\pi_n)\to0$.

In \cite{ContDas}, Cont and Das
construct nonuniform Haar
and Faber--Schauder systems associated with finitely refining partitions.
For a specific class of random Schauder expansions, they prove
that the quadratic sums along the reference
partition and every finitely refining coarsening converge to $t$ in probability.  If the reference partition and its coarsening are both
balanced and complete refining, the corresponding quadratic variations
exist and agree almost surely.  They also exhibit a deterministic Schauder series for which a balanced finitely refining coarsening
changes the limiting quadratic variation.

We study partition dependence for processes that connect Brownian bridges
with classical fractal functions. Replacing the independent standard normal
coefficients in the L\'evy--Ciesielski expansion of a Brownian bridge by
independent symmetric Rademacher variables gives a non-Gaussian process
$X$ with the same covariance, $\E[X_sX_t]=s\wedge t-st$. Its sample paths
belong to the flexible class of Takagi functions introduced by
Allaart~\cite{Allaartflexible} and studied further by Mishura and
Schied~\cite{MishuraSchied}. Unlike Brownian paths, these paths are
$1/2$-H\"older continuous. A Brownian bridge inherits L\'evy's theorem from
Brownian motion by subtracting the finite-variation path $t\mapsto tB_1$.
For the Rademacher bridge, this argument is unavailable. Nevertheless, we
prove that every fixed deterministic refining sequence with vanishing mesh
yields quadratic variation $t$, almost surely and uniformly in time.

We also examine the roles of refinement and symmetry. For partitions that
need not be refining, the mesh condition $o(1/\log n)$ remains sufficient.
At order $O(1/\log n)$, we construct one deterministic sequence along which
the terminal quadratic sums have distinct lower and upper limits for every
sign array. If the independent signs have a common nonzero mean, even
refining sequences can give a limit different from the dyadic one or fail
to give a limit at all.

Our second purpose is to study critical variation at other indices.
Mishura and Schied~\cite{MishuraSchied} proved that every signed
Takagi--Landsberg function of index $H\in(0,1)$ has linear dyadic
$p$-th variation at $p=1/H$. For every $H\ne1/2$, however, we construct a
fixed deterministic refining sequence along which the terminal $p$-power
sums have two distinct finite accumulation points almost surely. At
$H=1/4$, a separate calculation gives an explicit limit on uniform thirds
grids. These results identify both the scope and the limitations of the
quadratic invariance theorem.

The results of this paper were obtained independently of the very recent preprint by Cont~\cite{Cont2026}, which establishes intrinsic quadratic roughness for the same Rademacher Faber--Schauder process and demonstrates failure of intrinsic roughness at fourth order. His concentration estimates also imply the sufficient logarithmic mesh condition considered here. Our additional contributions include quadratic invariance along every fixed deterministic refining sequence without a mesh-rate assumption, a matching sharpness construction for the logarithmic threshold, and explicit results for biased independent signs. At every $H\ne1/2$, we construct a fixed deterministic refining sequence along which the critical sums have two distinct almost-sure accumulation points, extending the partition-dependence phenomenon beyond the fourth-order case. We also establish concentration of the critical sums around their expectations for arbitrary deterministic partitions under a mesh condition, yielding a first-moment characterization of convergence and a partition-independent variation index within this class of sequences.

Section~\ref{sec:main-results} introduces the model and states the results.
The proofs are collected in Section~\ref{sec:proofs}.

\section{Main results}
\label{sec:main-results}

We first introduce the bridges and the notions of variation used below.
We then state the quadratic invariance theorem, examine the effects of
partition mesh and asymmetric signs, and turn to critical variation at
$H\ne1/2$.

\subsection{The model and notions of variation}
\purba[inline]{Sep 1: This subsection I have updated the notations/ rewritten some part of it}
Let
\[
  e_{n,k}(t)=2^{-n/2}e_{0,0}(2^nt-k),
  \qquad
  e_{0,0}(u)=\max\{0,u\wedge(1-u)\},
\]
be the Faber--Schauder functions, where $n\ge0$ and $0\le k<2^n$.
If $(\xi_{n,k})$ are independent standard normal random variables, then the series 
\[
B^\circ_t:=\sum_{n=0}^{\infty}
       \sum_{k=0}^{2^n-1}\xi_{n,k}e_{n,k}(t)
       \]
converges uniformly in $t\in[0,1]$ almost surely, and $B^\circ$ is a standard Brownian bridge. 

Let $\{\theta_{n,k}\}$ be independent Rademacher random variables on a
probability space $(\Omega,\mathcal F,\Pp)$, so that 
\[
  \Pp(\theta_{n,k}=1)
  =\Pp(\theta_{n,k}=-1)=\frac12.
\]
For $H\in(0,1)$, we define
\begin{equation}
  X^H_t
  :=\sum_{n=0}^{\infty}2^{n(1/2-H)}
       \sum_{k=0}^{2^n-1}\theta_{n,k}e_{n,k}(t),
  \qquad 0\le t\le1.
  \label{eq:XH}
\end{equation}
We write $X:=X^{1/2}$.  At level $n$, the sum in \eqref{eq:XH} has sup norm
at most $2^{-nH-1}$; hence the series converges uniformly for every sign
array.  Every realization is therefore continuous and vanishes at both
endpoints. The functions in \eqref{eq:XH} belong to the flexible Takagi class introduced
by Allaart~\cite{Allaartflexible}. Mishura and Schied~\cite{MishuraSchied}
studied this subclass under the name signed Takagi--Landsberg functions.  In the present paper, we emphasize the fact that $X^H$ is a stochastic process and call it  the
\emph{random signed Takagi--Landsberg bridge of index $H$}.  We use
``random signed'' to mean that every tent sign is chosen independently,
while we use ``bridge'' only for the endpoint pinning; the term does not
assert that the law arises by conditioning another process.

Since the Rademacher and Gaussian coefficients have the same means and
variances, the covariance at index $1/2$ is
\begin{equation}
  \E[X_sX_t]
  =s\wedge t-st,
  \label{eq:bridge-covariance}
\end{equation}
which is the covariance kernel of a standard Brownian bridge.  Thus $X$ has
the covariance of a Brownian bridge, although its law is not Gaussian.

To compare partition sequences, we use stopped power sums and their
terminal values.

For a finite partition $\tau=\{0=t_0<\cdots<t_N=1\}$, a continuous path $x$, and $p\ge1$, we set
\[
  \mesh(\tau):=\max_{0\le i<N}(t_{i+1}-t_i)
\]
and define
\begin{align}
  [x]_{(\tau)}^{(p)}(t)
  &:=\sum_{i=0}^{N-1}
       |x(t_{i+1}\wedge t)-x(t_i\wedge t)|^p,
       \qquad 0\le t\le1,
       \label{eq:stopped-p-variation}\\
  [x]_{(\tau)}^{(p)}
  &:=[x]_{(\tau)}^{(p)}(1)
    =\sum_{i=0}^{N-1}|x(t_{i+1})-x(t_i)|^p.
       \label{eq:terminal-p-sum}
\end{align}
When $p=2$, we omit the superscript and write $[x]_{(\tau)}(t)$ and
$[x]_{(\tau)}$ for the stopped and terminal quadratic sums, respectively.
\purba[inline]{ I have replaced $V_\tau^{(p)}(x;t)$ with $[x]^{(p)}_\tau (t)$ and for $p=2$ we say $[x]_\tau (t)$. We do not need the notation $S_\tau^{(p)}(x)$ that way.}
\begin{definition}[Continuous $p$-th variation]\label{def: p-th var}
Let $\pi=(\pi_m)_{m\ge1}$ be a sequence of finite partitions with vanishing
mesh. A continuous path $x$ has \emph{continuous $p$-th variation along
$\pi$} if $[x]_{(\pi_m)}^{(p)}(t)$ converges uniformly in $t\in[0,1]$ to a
continuous function.
\end{definition}

\begin{remark}
Another convention includes the entire increment whose left endpoint does
not exceed $t$. We denote these sums by
\[
  V_\tau^{(p)}(x;t)
  :=\sum_{\substack{0\le i<N\\t_i\le t}}
       |x(t_{i+1})-x(t_i)|^p,
  \qquad 0\le t\le1,
\]
and write $V_\tau(x;t):=V_\tau^{(2)}(x;t)$ when $p=2$.
\purba[inline]{17th Sep: We work with this version and replace $[x]_{(\tau),\mathrm{succ}}^{(p)}(t)$ with $V_\tau^{(p)}(x;t)$? -- Done}
Writing $\omega_x(\delta):=\sup_{|u-v|\le\delta}|x(u)-x(v)|$, we have
\begin{equation}
\sup_{0\le t\le1}
\left|V_\tau^{(p)}(x;t)-[x]_{(\tau)}^{(p)}(t)\right| \le \omega_x(\mesh(\tau))^p.
  \label{eq:successor-comparison}
\end{equation}
Indeed, only the interval beginning at $\max(\tau\cap[0,t])$ can contribute
to the difference, and both corresponding terms lie between zero and the
right-hand side. Thus the conventions have the same limits as the mesh
vanishes. The successor sums are nondecreasing in $t$; consequently,
pointwise convergence to a continuous limit also implies uniform
convergence. We will use this observation to pass from rational times to
all times.
\end{remark}

Throughout, partition sequences are deterministic and have vanishing mesh,
unless stated otherwise. We write $\pi=(\pi_m)_{m\ge1}$, where
\[
  \pi_m=\{0=t^m_0<t^m_1<\cdots<t^m_{N_m}=1\}.
\]
In particular, the partition points do not depend on the path.

\begin{definition}[Refining partitions]
A sequence $\pi$ is \emph{refining} if $\pi_m\subseteq\pi_{m+1}$ for every
$m$. It is \emph{finitely refining} if, in addition, there is a finite
constant $M$ such that each interval of $\pi_m$ contains at most $M$ new
points of $\pi_{m+1}$, uniformly in $m$.
\label{finite.refining}
\end{definition}

We denote the dyadic sequence by $\mathbb T=(\mathbb T_m)$, where
\[
  \mathbb T_m:=\{k2^{-m}:0\le k\le2^m\}.
\]
These partitions provide our benchmark. Mishura and
Schied~\cite[corrected arXiv version, Theorem~2.1]{MishuraSchied} proved
that, for $p=1/H$ and every choice of the signs,
\begin{equation}
  [X^H]_{(\mathbb T_n)}^{(p)}(t)
  \longrightarrow t C_{\mathrm{dyad}}(H),
  \qquad
  C_{\mathrm{dyad}}(H):=2^{1-p}\E|Z_H|^p,
  \label{eq:known-dyadic-limit}
\end{equation}
uniformly in $t\in[0,1]$, where
\[
  Z_H:=\sum_{m=0}^{\infty}2^{m(H-1)}Y_m
\]
and $(Y_m)$ is an auxiliary independent Rademacher sequence. The factor
$2^{1-p}$ is included in the corrected arXiv version; it is missing from
the published theorem statement and the corresponding terminal-time
calculation. We also recover the terminal limit in
Lemma~\ref{lem:finite-template} below.

A variation index can be defined even when the critical power sums do not
converge; see, e.g., \cite{das2022thesis,HanSchied2025}.
\begin{definition}[Variation index]\label{def. variation index}
For $x\in C([0,1])$, the \emph{variation index along $\pi$} is
\begin{equation}
  p^\pi(x):=\inf\left\{p\ge1:
     \limsup_{m\to\infty}[x]_{(\pi_m)}^{(p)}=0\right\},
  \label{def. alter variation index}
\end{equation}
with $\inf\varnothing=\infty$.
\end{definition}

For $1\le q<r$, continuity gives
\[
  [x]_{(\pi_m)}^{(r)}
  \le\omega_x(\mesh(\pi_m))^{r-q}[x]_{(\pi_m)}^{(q)}.
\]
It follows that
\begin{equation}
  \limsup_{m\to\infty}[x]_{(\pi_m)}^{(q)}
  =\begin{cases}
    0,&q>p^\pi(x),\\
    \infty,&1\le q<p^\pi(x).
  \end{cases}
  \label{eq : limsup q variation}
\end{equation}
Since $C_{\mathrm{dyad}}(H)$ is finite and positive,
\eqref{eq:known-dyadic-limit} shows that $p^{\mathbb T}(X^H)=1/H$ for every
sign array. We therefore call $p=1/H$ the \emph{critical exponent}.

\subsection{Quadratic variation at index \texorpdfstring{$H=1/2$}{H=1/2}}
\purba[inline]{Sep 1: Brownian index was not defined in the paper, I have replaced  Brownian index with "variation index" along $\pi$. I think the word "Brownian index" is confusing.}
At $H=1/2$, the process has the covariance of a Brownian bridge and
$1/2$-H\"older continuous paths. Our first theorem shows that it also shares
the Brownian bridge's quadratic variation along every fixed refining
sequence.

\begin{theorem}[Quadratic variation along refining partitions]
  \label{thm:main refining}
For every fixed deterministic refining sequence $\pi$, there is an
event $\Omega_\pi$ with $\Pp(\Omega_\pi)=1$ such that, for every
$\omega\in\Omega_\pi$,
\[
  \lim_{m\to\infty}\sup_{t\in[0,1]}
  \left|[X(\omega)]_{(\pi_m)}(t)-t\right|=0.
\]
\end{theorem}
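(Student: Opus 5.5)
The plan is to first prove almost-sure convergence of the \emph{terminal} sums $[X]_{(\pi_m)}(t)$ for each fixed rational $t$, and then upgrade to uniformity using the successor sums. For fixed $t$, the sequence $\pi_m\cup\{t\}$ is still refining with vanishing mesh, and $[X]_{(\pi_m)}(t)$ is a terminal quadratic sum of the stopped path along it, so it suffices to treat $t=1$ for a general refining sequence (and the obvious analogue on $[0,t]$). Once $[X]_{(\pi_m)}(t)\to t$ on a countable dense set, \eqref{eq:successor-comparison} with $\omega_X(\delta)\le C\delta^{1/2}$ transfers this to $V_{\pi_m}(X;t)$. Monotonicity in $t$ and continuity of the limit then give uniform convergence, and applying \eqref{eq:successor-comparison} once more returns to $[X]_{(\pi_m)}(t)$.

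For the terminal sum I would use the operator representation. Each increment $X(t_{i+1})-X(t_i)=\langle \theta,v_i\rangle$ is linear in the sign array, where $v_i=(e_{n,k}(t_{i+1})-e_{n,k}(t_i))_{n,k}$. Hence $S_m:=[X]_{(\pi_m)}=\theta^{\top}A_m\theta$ with $A_m=\sum_i v_iv_i^{\top}$. Since $\theta_{n,k}^2=1$, the diagonal part is deterministic and equals $\E S_m=\sum_i\bigl(\Delta_i-\Delta_i^2\bigr)$ by \eqref{eq:bridge-covariance}, where $\Delta_i=t_{i+1}-t_i$. This tends to $1$ because $\sum_i\Delta_i^2\le\mesh(\pi_m)$. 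The fluctuation $Q_m=S_m-\E S_m$ is a homogeneous Rademacher chaos of order two. The key estimate is $\|A_m\|_{\mathrm{op}}=\|G_m\|_{\mathrm{op}}\le\mesh(\pi_m)$, where $G_m=(\langle v_i,v_j\rangle)=\operatorname{diag}(\Delta)-\Delta\Delta^{\top}$ is the covariance matrix of the increments. Together with $\tr A_m\le1$, this gives $\|A_m\|_{\HS}^2\le\mesh(\pi_m)$. Hanson--Wright (equivalently, hypercontractive moment bounds $\E|Q_m|^q\le (Cq)^q\|A_m\|_{\HS}^q$ combined with the operator-norm refinement) then yields
\[
\Pp(|Q_m|>\varepsilon)\le 2\exp\bigl(-c\min(\varepsilon^2,\varepsilon)/\mesh(\pi_m)\bigr).
\]
This already gives Borel--Cantelli along any subsequence with $\sum_m e^{-c/\mesh(\pi_m)}<\infty$, which is the $o(1/\log n)$ statement. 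In particular, it gives a.s. convergence along a sparse subsequence $m_j$ with $\mesh(\pi_{m_j})\le 2^{-j}$.

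The main obstacle is filling the gaps between $m_j$ and $m_{j+1}$, since there may be arbitrarily many partitions in between and no mesh rate is assumed. Here refinement must be used. For $m_j\le m\le m_{j+1}$, the difference $S_m-S_{m_j}$ is a sum over the intervals $I$ of $\pi_{m_j}$ of local terms $D_I(m)=[X]_{(\pi_m\cap I)}-(X\text{-increment over }I)^2$. These local terms change only inside one interval at a time as points are added. I would try a maximal inequality for $\max_{m_j\le m\le m_{j+1}}|Q_m-Q_{m_j}|$ exploiting the nested, chain-like structure. The first attempt is a chaining (Dudley-type) argument over the chain of partitions. The increments $Q_m-Q_{m'}$ are again chaoses with matrix $A_m-A_{m'}$, whose operator norm is at most $\mesh(\pi_{m_j})$ and whose Hilbert--Schmidt norm is controlled by $\sum_I|I|^2$ restricted to refined intervals, so a dyadic grouping of the added points should give a $\log$-cardinality-free bound. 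As a fallback, I would condition on the coarse levels $\theta_{n,k}$, $2^{-n}\gtrsim\mesh(\pi_{m_j})$, under which the low-level part of $X$ is nearly linear on each coarse interval. The local contributions from different coarse intervals then become conditionally nearly independent, and a Lévy/Ottaviani-type maximal inequality over the increasing chain would yield $\Pp(\max_m|Q_m-Q_{m_j}|>\varepsilon)\le Ce^{-c/\mesh(\pi_{m_j})}$. This bound is summable in $j$, and Borel--Cantelli would finish the terminal statement.
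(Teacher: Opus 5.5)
Your reduction to a fixed $t$, the mean identity $\E[X]_{(\rho_m(t))}=t-s(\rho_m(t))$, the single-partition Hanson--Wright bound, and the passage from rational times to uniform convergence via the successor sums are all correct. The Hanson--Wright step is essentially the paper's proof of Theorem~\ref{thm:sharp-logarithmic-mesh}(a). But it only gives convergence along a sparse subsequence. The step that carries the content of the refining theorem, controlling $\max_{m_j\le m\le m_{j+1}}|Q_m-Q_{m_j}|$ when arbitrarily many partitions lie in between, is only announced (``should give'', ``would yield''), not proved. So there is a genuine gap exactly where the theorem's content lies.

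The control you suggest for this step is also too weak. Bounding $\|A_m-A_{m'}\|_{\HS}^2$ by $\sum|I|^2$ over the intervals $I\in\pi_m$ refined in $\pi_{m'}$ is not superadditive along the chain. Suppose that, inside an interval of length $\ell$, pieces of length $\varepsilon\ell$ are split off one at a time. Every pairwise bound is then at most $\ell^2$, and the exact energy decrements sum to at most $\ell^2$. Yet the one-step bounds add up to order $\ell^2/\varepsilon$. A pairwise bound that is merely uniformly small does not control a maximum over an unbounded number of indices. The fallback fails as stated. The increments $Q_{m+1}-Q_m$ are not independent, even given the coarse levels, because insertions at different times into the same coarse interval interact, so L\'evy--Ottaviani inequalities do not apply. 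Moreover, the coarse levels are not negligible: on an interval of length $h$ their increment has the same order $h^{1/2}$ as the fine part.

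The missing idea is that, for nested partitions, the Hilbert--Schmidt energy telescopes exactly. Put $K_\rho=\sum_{I\in\rho}\one_I\otimes\one_I$; your $A_m$ is its compression onto mean-zero functions, written in the Haar basis. If $\sigma$ refines $\rho$, both partitions of $[0,t]$, then $\tr(K_\rho K_\sigma)=s(\sigma)$, and hence $\|K_\rho-K_\sigma\|_{\HS}^2=s(\rho)-s(\sigma)$. Compression and orthogonality of the products $\theta_\alpha\theta_\beta$ give $\E|Z_\rho-Z_\sigma|^2\le2(s(\rho)-s(\sigma))$ for $Z_\rho=[X]_{(\rho)}-t+s(\rho)$, which is your $Q_m$. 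Bonami's inequality upgrades this to $\E|Z_\rho-Z_\sigma|^4\le324(s(\rho)-s(\sigma))^2$.

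Now let $s_m:=s(\rho_m(t))$, which is nonincreasing. The bound above says the fourth moment of every block sum of $Z_j-Z_{j+1}$ is at most a constant times the square of the additive control $\sum_{j=a}^{b}(s_j-s_{j+1})$. Because the exponent $2$ exceeds one, the M\'oricz--Serfling--Stout maximal inequality applies to these dependent increments. It yields $\E\sup_{m\ge a}|Z_m-Z_a|^4\le Cs_a^2$, hence $\E\sup_{m\ge a}|Z_m|^4\le C's_a^2\to0$, and therefore $Z_m\to0$ almost surely. This would also close your own scheme, since it gives $\E\max_{m_j\le m\le m_{j+1}}|Q_m-Q_{m_j}|^4\le C4^{-j}$. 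More to the point, it makes the subsequence, the operator-norm estimate and Hanson--Wright unnecessary: the paper's proof uses only these moment bounds and needs no rate for the mesh.
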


\begin{remark}
The event $\Omega_\pi$ may depend on the fixed sequence $\pi$. The theorem
does not assert that one probability-one event works for all partition
sequences simultaneously. This is the same order of quantifiers as in
L\'evy's theorem and permits the path-dependent constructions discussed in
the introduction.
\end{remark}

Without refinement, a sufficiently fast mesh decay still guarantees convergence.
The next result gives the analogue of Dudley's sufficient condition and
shows that its order is sharp as Fern\'andez de la Vega~\cite{DeLaVega1974} showed it to be for Brownian motion. In the sharpness assertion, failure occurs
for every sign array.

\begin{theorem}
\label{thm:sharp-logarithmic-mesh}
Let \(X=X^{1/2}\) be the random signed Takagi--Landsberg bridge.
\begin{enumerate}[{\rm (a)}]
\item
Let \((\pi_n)\) be a deterministic sequence of partitions, not necessarily
refining. If 
\[
  \mesh(\pi_n)=o\Big(\frac1{\log n}\Big),
\]
then
\[
  \sup_{t\in[0,1]}\left|[X]_{(\pi_n)}(t)-t\right|
  \longrightarrow0\qquad\text{almost surely}.
\]

\item
Set
\[
  \gamma:=\frac{4+\sqrt2}{24}
          =\frac{2+1/\sqrt2}{12}.
\]
There exists a fixed deterministic, nonrefining sequence
\((\widehat\pi_n)\) such that
\[
  \mesh(\widehat\pi_n)
  =O\Big(\frac1{\log n}\Big)
\]
and, for every sign array,
\[
  \liminf_{n\to\infty}[X]_{(\widehat\pi_n)}=1-\gamma,
  \qquad
  \limsup_{n\to\infty}[X]_{(\widehat\pi_n)}=1+\gamma.
\]
In particular, the little-\(o\) condition in the first assertion
cannot be replaced by a big-\(O\) condition.
\end{enumerate}
\end{theorem}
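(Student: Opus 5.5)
For part~(a) we write the quadratic sum as a quadratic form in the signs and apply a Rademacher chaos tail bound; for part~(b) we build an explicit partition sequence out of finitely many templates.

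\textbf{Part (a): operator representation.} Fix a partition $\tau$ and $t$. Each increment is linear in the signs:
\[
  X(t_{i+1}\wedge t)-X(t_i\wedge t)=\sum_{n,k}\theta_{n,k}\,a^{(i)}_{n,k},
  \qquad a^{(i)}_{n,k}=e_{n,k}(t_{i+1}\wedge t)-e_{n,k}(t_i\wedge t).
\]
Hence $[X]_{(\tau)}(t)=\langle\theta,A_{\tau,t}\theta\rangle$ with $A_{\tau,t}=\sum_i a^{(i)}\otimes a^{(i)}$, a positive operator on $\ell^2$ of the index set. Since $\theta_{n,k}^2=1$,
\[
  [X]_{(\tau)}(t)=\tr A_{\tau,t}+\sum_{\alpha\ne\beta}A_{\alpha\beta}\theta_\alpha\theta_\beta .
\]
The diagonal term is deterministic and equals $\E[X]_{(\tau)}(t)$. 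By the bridge covariance \eqref{eq:bridge-covariance} it is the Brownian-bridge expectation,
\[
  \sum_i\bigl[(\Delta_i)-(\Delta_i)^2\bigr]=t-O(\mesh(\tau)),
\]
where $\Delta_i$ is the length of the $i$-th stopped interval. The off-diagonal part is a homogeneous Rademacher chaos of order two. By Hanson--Wright for Rademacher variables (or hypercontractivity, giving $\|\cdot\|_{L^q}\lesssim q\|A\|_{\HS}$),
\[
  \Pp(|Q-\E Q|>\varepsilon)\le 2\exp\bigl(-c\min(\varepsilon^2/\|A\|_{\HS}^2,\ \varepsilon/\|A\|_{op})\bigr).
\]

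\textbf{Part (a): norm bounds and Borel--Cantelli.} The key estimate is that $\|A_{\tau,t}\|_{op}$ and $\|A_{\tau,t}\|_{\HS}^2$ are $O(\mesh(\tau))$, up to at worst a bounded factor. The operator norm is the supremum of $\E\bigl[\sum_i c_i\,\delta X_i\bigr]^2$ over unit $c$, viewed in the Gram representation. Since $\|A\|_{op}=\|G\|_{op}$ with $G_{ij}=\langle a^{(i)},a^{(j)}\rangle=\mathrm{Cov}(\delta X_i,\delta X_j)$, this is the Brownian-bridge increment covariance matrix: diagonal $\Delta_i-\Delta_i^2$ and off-diagonal $-\Delta_i\Delta_j$. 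Its norm is $\le\mesh$, and $\|G\|_{\HS}^2\le\sum\Delta_i^2\le\mesh$. With $\mesh(\pi_n)=o(1/\log n)$, the tail $2\exp(-c\varepsilon/\mesh(\pi_n))$ is summable in $n$ for every fixed $\varepsilon$. Borel--Cantelli at each rational $t$ then gives a.s.\ convergence, and the successor-comparison remark with \eqref{eq:successor-comparison} and monotonicity upgrades this to uniform convergence.

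\textbf{Part (b): construction.} The lower-order $O(1/\log n)$ scale saturates the exponential bound, so a deterministic counterexample must exploit structure rather than randomness. We use the self-similarity of $X$: on a dyadic interval $I$ of level $\ell$, the restriction of $X$ minus its chord is a rescaled copy driven by the signs below $I$. We choose two templates, i.e.\ finite partitions $\rho_\pm$ of $[0,1]$ that are not dyadic (for instance involving points like $1/3$ or $\sqrt2$-related offsets that produce $\gamma$). We then compute exactly, for \emph{every} sign configuration of the relevant finitely many levels, the quadratic sum of $X$ along the template. The plan is to arrange that the result is deterministic given the signs, so that placing the template inside each dyadic interval of a level $L$, with dyadic refinement elsewhere, yields a sum converging to $1\pm\gamma$ uniformly in signs. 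This computation is presumably Lemma~\ref{lem:finite-template}. To obtain sign-independence, the templates will be taken symmetric so that cross terms between tents cancel pairwise. Finally we enumerate sequences: index $n$ cycles through levels $L$ with roughly $2^{L}$ consecutive indices per level, so mesh $\approx2^{-L}\approx1/\log n$, while alternating the $\pm$ templates on blocks yields both limits.

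\textbf{Main obstacle.} The hardest step is the exact, sign-independent template computation producing precisely $\gamma=(2+1/\sqrt2)/12$. There the quadratic sum must be computed deterministically for every sign array, which requires canceling cross terms $\theta_\alpha\theta_\beta$ through a careful choice of partition points relative to the tent geometry.
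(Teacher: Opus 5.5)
Your part (a) is essentially the paper's argument. You use Hanson--Wright with $\lVert A\rVert_{\mathrm{op}}\le\mesh$ and $\lVert A\rVert_{\HS}^2\le t\,\mesh$; your Gram-matrix bound $0\le G\le\operatorname{diag}(\Delta_i)$ is equivalent to the paper's compression $A_\rho=PK_\rho P$. You then apply Borel--Cantelli at rational $t$ and use the monotone successor sums. Two caveats:
\begin{itemize}
\item Hanson--Wright is finite-dimensional. You must apply it to the truncation $X^{[N]}$ and pass to the limit; the paper does this via Fatou on the strict exceedance events.
\item Your parenthetical hypercontractivity alternative does not suffice. It only yields a tail $\exp(-c\varepsilon/\lVert A\rVert_{\HS})\approx\exp(-c\varepsilon/\sqrt{\mesh})$, which need not be summable when $\mesh(\pi_n)=o(1/\log n)$. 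The operator-norm branch is essential.
\end{itemize}

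Part (b) has a genuine gap: a fixed template repeated in dyadic cells cannot produce $1\pm\gamma$. Lemma~\ref{lem:finite-template} holds at $H=1/2$. For every fixed template $\boldsymbol u$ it gives the almost-sure limit $\E G_{\boldsymbol u,2}(W,X)=\sum_j\bigl[(u_{j+1}-u_j)^2\E W^2+(u_{j+1}-u_j)-(u_{j+1}-u_j)^2\bigr]=1$, since $\E W=0$, $\E W^2=1$, and $W$ is independent of $X$. Symmetric or $\sqrt2$-offset points change nothing.

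Your index counting also fails. With about $2^L$ indices at level $L$, one has $\log n\asymp L$, so $\mesh\asymp2^{-L}\asymp1/n=o(1/\log n)$. Part (a) itself then forces the limit $1$ almost surely. Getting $\mesh\asymp1/\log n$ with mesh $2^{-L}$ requires about $2^{2^L}$ indices per level.

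The missing idea is to spend this doubly exponential room on enumeration.
\begin{itemize}
\item Let block $\ell$ contain all $2^{2^{\ell-1}}$ partitions obtained from $\mathbb T_{\ell-1}$ by inserting or omitting the midpoint separately in each cell.
\item Set $W_{r,k}=2^{r/2}(X_{(k+1)2^{-r}}-X_{k2^{-r}})$. Omitting the midpoint of cell $k$ changes the sum by $2^{-\ell}(W_{\ell-1,k}^2-1)$.
\item The recursion $W_{r+1,2k}=2^{-1/2}(W_{r,k}+\theta_{r,k})$, $W_{r+1,2k+1}=2^{-1/2}(W_{r,k}-\theta_{r,k})$ shows that the empirical law of $(W_{r,k})_{k<2^r}$ is exactly the law of $W_r=\sum_{j\le r}2^{-j/2}Y_j$, for every sign array.
\item Hence the block maximum and minimum are the deterministic numbers $1-2^{-\ell}+\frac12\E[(W_{\ell-1}^2-1)_+]$ and $1-2^{-\ell}-\frac12\E[(1-W_{\ell-1}^2)_+]$.
\item These converge to $1\pm\gamma$, where $2\gamma=\E[(W^2-1)_+]$ and $W=\sqrt2\,U+V$ with $U,V$ independent uniform on $[-1,1]$.
\end{itemize}
The extremizing partition depends on the signs. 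What is sign-independent is the extremal value, and it is attained for every sign array because all insertion patterns appear in the block. No cancellation of cross terms through symmetric templates is involved. The constant $\gamma$ comes from the law of $W$, not from special offsets in a template.
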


The symmetry assumption also matters. The next theorem shows that a
common nonzero mean of the signs can change the quadratic variation along
refining sequences. Uniform thirds grids yield a limit larger than the
dyadic one, whereas triadic grids yield two distinct subsequential limits.

\purba[inline]{For the theorem below it's probably better to use the notation Y instade of X. I am making these changes below}
\begin{theorem}
  \label{thm:biased-signs}
Fix any $\nu\in(0,1)\setminus\{1/2\}$, and let
$\{\theta_{m,k}^{(\nu)}\}$ be independent random variables with
\[
\Pp\bigl(\theta_{m,k}^{(\nu)}=1\bigr)=\nu,
  \qquad
  \Pp\bigl(\theta_{m,k}^{(\nu)}=-1\bigr)=1-\nu.
\]
Define
\[
  Y_t^{(\nu)} :=\sum_{m=0}^{\infty}\sum_{k=0}^{2^m-1} \theta_{m,k}^{(\nu)}e_{m,k}(t),
  \qquad 0\le t\le1,
\]
and put $b:=2\nu-1$.  Consider the uniform thirds grids and the
triadic grids
\[
  \mathbb U_n
  :=\left\{\frac{j}{3\cdot2^n}:0\le j\le3\cdot2^n\right\},
  \qquad
  \mathbb T_n^{(3)}
  :=\left\{\frac{j}{3^n}:0\le j\le3^n\right\}.
\]
Then
\begin{equation}
  [Y^{(\nu)}]_{(\mathbb U_n)}
  \longrightarrow
  1+\frac{6+8\sqrt2}{9}\,b^2
  \qquad\text{almost surely}.
  \label{eq:biased-thirds-limit}
\end{equation}
In particular, this limit is strictly larger than the dyadic limit of $1$.

Along the triadic grids, the quadratic sums do not converge almost surely.
More precisely, there are deterministic subsequences $(n_j)$ and $(m_j)$
such that, almost surely,
\begin{align}
  [Y^{(\nu)}]_{(\mathbb T_{n_j}^{(3)})}      
  &\longrightarrow 1-\frac49b^2,
  \label{eq:biased-triadic-limit-one}\\
  [Y^{(\nu)}]_{(\mathbb T_{m_j}^{(3)})}
  &\longrightarrow 1-\frac{28}{81}b^2.
  \label{eq:biased-triadic-limit-two}
\end{align}
The two limits differ by $8b^2/81>0$.  By contrast, for every realization
of the signs,
\begin{equation}
  [Y^{(\nu)}]_{(\mathbb T_n)}=1-2^{-n},
  \label{eq:biased-dyadic-exact}
\end{equation}
and hence the dyadic quadratic sums converge to $1$.
\end{theorem}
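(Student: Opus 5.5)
The plan is to split off the bias. Set $\varepsilon_{m,k}:=\theta^{(\nu)}_{m,k}-b$, which are independent, centered, with variance $1-b^2$. Then
\[
Y^{(\nu)}=bT+W,\qquad T:=\sum_{m,k}e_{m,k},\qquad W:=\sum_{m,k}\varepsilon_{m,k}e_{m,k},
\]
where $T$ is the deterministic Takagi--Landsberg function of index $1/2$. For a partition $\tau$ with increments $\Delta_i$, the quadratic sum $[Y^{(\nu)}]_{(\tau)}$ is a polynomial of degree at most two in the independent centered variables $\varepsilon_{m,k}$. Its expectation is
\[
\E[Y^{(\nu)}]_{(\tau)}=b^2[T]_{(\tau)}+(1-b^2)\Bigl(1-\sum_i\Delta_i^2\Bigr),
\]
because $W$ has covariance $(1-b^2)(s\wedge t-st)$ by the same computation as \eqref{eq:bridge-covariance}. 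Hence, along any sequence with vanishing mesh, $\E[Y^{(\nu)}]_{(\tau)}$ converges to $1-b^2+b^2L$ exactly when $[T]_{(\tau)}\to L$. The theorem thus reduces to two tasks: a concentration statement, and deterministic computations of $[T]_{(\tau)}$.

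\textbf{Dyadic identity \eqref{eq:biased-dyadic-exact}.} This is deterministic and holds for every sign array. At the points of $\mathbb T_n$ only the levels $m<n$ contribute, so $Y^{(\nu)}$ agrees there with its piecewise linear interpolant $L_n=\sum_{m<n}\sum_k\theta_{m,k}e_{m,k}$. The derivative $L_n'=\sum_{m<n}\sum_k\theta_{m,k}h_{m,k}$ is a Haar expansion with $\|h_{m,k}\|_{L^2}=1$, and $L_n'$ is constant on dyadic intervals of length $2^{-n}$. Therefore
\[
[Y^{(\nu)}]_{(\mathbb T_n)}=2^{-n}\int_0^1(L_n')^2\,dt=2^{-n}\sum_{m<n}2^m\theta_{m,k}^2=1-2^{-n}
\]
by orthonormality and $\theta^2=1$.

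\textbf{Concentration.} For the partitions $\mathbb U_n$ and $\mathbb T^{(3)}_n$, I would bound $\mathrm{Var}([Y^{(\nu)}]_{(\tau)})$ by splitting the sum into a linear part $2b\sum_i\Delta_iT\,\Delta_iW$ and a chaos part $\sum_i(\Delta_iW)^2$.
\begin{itemize}
\item For the linear part, one has $|\Delta_iT|\lesssim\Delta_i^{1/2}$. The covariance of $W$ then gives a variance of order $\sum_i\Delta_i^2$, up to logarithmic factors.
\item For the chaos part, the centered second-order chaos in the $\varepsilon_{m,k}$ (which are bounded, so hypercontractive) has variance bounded by $C\sum_{i,j}\mathrm{Cov}(\Delta_iW,\Delta_jW)^2\lesssim\mesh(\tau)$. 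The diagonal terms $\varepsilon_{m,k}^2-(1-b^2)$ contribute at most the same order, since each tent meets only $O(1)$ intervals once its level exceeds the grid level.
\end{itemize}
Since the mesh decays geometrically along both grid families, Chebyshev's inequality and Borel--Cantelli give $[Y^{(\nu)}]_{(\tau_n)}-\E[Y^{(\nu)}]_{(\tau_n)}\to0$ almost surely. This also holds along any deterministic subsequence.

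\textbf{Deterministic Takagi sums (main obstacle).} It remains to show $[T]_{(\mathbb U_n)}\to2+\frac{6+8\sqrt2}{9}=\frac{24+8\sqrt2}{9}$, and that $[T]_{(\mathbb T^{(3)}_{n_j})}\to\frac59$ and $[T]_{(\mathbb T^{(3)}_{m_j})}\to\frac{53}{81}$ along suitable subsequences. These values are what the reduction above forces.

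For $\mathbb U_n$, I will use self-similarity: on a dyadic interval $I_{n,k}$,
\[
T=\ell_{n,k}+2^{-n/2}\,T(2^n\cdot-k),
\]
where $\ell_{n,k}$ is affine with slope $s_{n,k}=\sum_{m<n}\pm2^{m/2}$ coming from the levels below $n$. The three increments inside $I_{n,k}$ equal $s_{n,k}\cdot\frac{2^{-n}}{3}$ plus $2^{-n/2}$ times the increments of $T$ across $\{0,\frac13,\frac23,1\}$; the latter are computable from the binary expansion $\frac13=0.\overline{01}$ as explicit geometric series in $2^{-1/2}$. Squaring and summing over $k$ produces three kinds of terms:
\begin{itemize}
\item terms in $\sum_k s_{n,k}^2$, which equals $2^n-1$ by the Haar computation above;
\item cross terms in $\sum_k s_{n,k}$, which is $O(2^{n/2})$;
\item terms involving the fixed thirds-increments of $T$.
\end{itemize}
Together these give the limit. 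I expect this bookkeeping, and in particular the $\sqrt2$ arising from the period-two expansion of $\frac13$, to be the delicate part.

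For the triadic grids, the points $j/3^n$ have purely periodic binary expansions whose period is the order of $2$ modulo $3^n$. I would write $[T]_{(\mathbb T^{(3)}_n)}$ via the same decomposition at the dyadic level $\approx n\log_23$. The result depends on the position of $n\log_23$ modulo $1$, equivalently on the ratio $3^{-n}2^{\lfloor n\log_23\rfloor}$. I would choose $(n_j)$ and $(m_j)$ so that this ratio converges to two different values, which is possible by the density of $\{n\log_23\}$, and evaluate the two limits explicitly. Identifying which accumulation values give exactly $\frac59$ and $\frac{53}{81}$ is the step I am least sure of; I would first try the subsequences along which the ratio tends to $1$ and to $\frac12$ (or to $\frac23$), and compute those limits explicitly.
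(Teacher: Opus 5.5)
\textbf{Gap: the triadic grids.} Your reduction $Y^{(\nu)}=bT+W$ is sound, and your $T$ is the paper's $F$. So are the mean formula, the $O(\mesh(\tau))$ variance bounds with Borel--Cantelli, and your Parseval proof of \eqref{eq:biased-dyadic-exact}; the paper gets that identity from the increment recursion instead. The triadic part, however, contains no argument.

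Decomposing at level $L=\lfloor n\log_2 3\rfloor$ does not by itself produce a limit, because $\mathbb T^{(3)}_n$ is not a fixed template repeated in level-$L$ cells:
\begin{itemize}
\item the points $j/3^n$ sit at cell-dependent offsets;
\item a positive fraction of the increments straddle a level-$L$ point and see two coarse slopes;
\item the cross terms between coarse slopes and fine increments no longer cancel cell by cell.
\end{itemize}
What must be proved is that $[T]_{(\mathbb T^{(3)}_n)}-G(c_{3^n})\to0$ for a \emph{continuous} profile $G$, where $c_q=q2^{-L}\in[1,2)$. Your sketch has no mechanism for controlling the joint behaviour of slopes and offsets.

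The paper supplies this mechanism as follows.
\begin{itemize}
\item For odd $q$, write $[T]_{(\tau_q)}=q^{-1}\sum_jD_q(j/q)^2$ with $D_q(t)=\sqrt q\,(T(t+q^{-1})-T(t))$.
\item Discard the levels outside $L-R,\dots,L+R$, at uniform cost $O(2^{-R/2})$. What remains is $D_q^{[R]}(t)=H_{R,c_q}(2^{L-R}t)$ for a periodic $H_{R,c}$ with an $R$-dependent Lipschitz bound.
\item Because $q$ is odd, $j\mapsto2^{L-R}j$ permutes the residues mod $q$. So the discrete sum is a Riemann sum of $H_{R,c_q}^2$, and hence equals $q\int_0^1(T(t+q^{-1})-T(t))^2\,dt$ up to $O(2^{-R/2})+O_R(q^{-1})$.
\item The level increments are orthogonal in $L^2(0,1)$, since $\varphi(x+\tfrac12)=\tfrac12-\varphi(x)$ for $\varphi=\operatorname{dist}(\cdot,\mathbb Z)$. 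This gives $G(c)=c\sum_{\ell\in\mathbb Z}2^{-\ell}\mathcal R(2^\ell/c)$, where $\mathcal R(u)=d(u)^2-\tfrac43d(u)^3$ and $d(u)=\operatorname{dist}(u,\mathbb Z)$.
\item Evaluating gives $G(1)=5/9$ and $G(3/2)=53/81$.
\end{itemize}
This also settles your uncertainty about the subsequences. The formula for $G$ is invariant under $c\mapsto2c$, so $G(2)=G(1)$. Your candidate ratio $2^L/3^n\to\tfrac12$ therefore reproduces the value $5/9$ obtained for ratio $\to1$. The right choices are $c_{3^n}\to1$ and $c_{3^n}\to\tfrac32$ (ratio $\to\tfrac23$), which exist by density of $\{n\log_2 3\}$.

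\textbf{Thirds grids: wrong target.} Here your method is the paper's, but your target constant is off. The equation $1-b^2+b^2L=1+\frac{6+8\sqrt2}{9}b^2$ forces $L=1+\frac{6+8\sqrt2}{9}=\frac{15+8\sqrt2}{9}$, not $\frac{24+8\sqrt2}{9}$.

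The bookkeeping is also easier than you expect. Since $T(\tfrac13)=T(\tfrac23)=a:=\tfrac13\sum_{m\ge0}2^{-m/2}=\tfrac{2+\sqrt2}{3}$, the three increments in the $k$th level-$n$ cell are $\tfrac{d_{n,k}}{3}+2^{-n/2}a$, $\tfrac{d_{n,k}}{3}$ and $\tfrac{d_{n,k}}{3}-2^{-n/2}a$, with $d_{n,k}=2^{-n}s_{n,k}$. The cross terms therefore cancel exactly in every cell, and $[T]_{(\mathbb U_n)}=\tfrac13(1-2^{-n})+2a^2$. No estimate on $\sum_ks_{n,k}$ is needed. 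Note also that $\sum_ks_{n,k}^2=2^n(2^n-1)$; the value $2^n-1$ you wrote is the average over $k$.
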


\subsection{Critical variation at indices \texorpdfstring{$H\ne1/2$}{H different from 1/2}}
\purba[inline]{Have not checked/ modified this section so far}
For $H\ne1/2$, the critical exponent $p=1/H$ differs from two.  The next theorem
shows that the corresponding variation is not invariant under the choice
of a fixed deterministic refining sequence.

\begin{theorem}
  \label{thm:all-H-failure}
Let $H\in(0,1)\setminus\{1/2\}$ and $p=1/H$.  There is a fixed
deterministic refining sequence $\pi^H=(\pi_n^H)$ with vanishing mesh such
that
\[
  [X^H]_{(\pi_n^H)}^{(p)}
\]
fails to converge almost surely.  Hence $X^H$ does not admit continuous
critical $p$-th variation along $\pi^H$.

More precisely, for some integer $r=r(H)\ge1$, let
\begin{equation}
  \mathbb V_n^{(r)}
  :=\mathbb T_n\cup
    \left\{2^{-n}(k+2^{-r}):0\le k<2^n\right\}.
  \label{eq:split-grid-main}
\end{equation}
There is a deterministic constant $C_r(H)$ such that, almost surely,
\[
  [X^H]_{(\mathbb V_n^{(r)})}^{(p)}\longrightarrow C_r(H),
\]
and
\[
  C_r(H)>C_{\mathrm{dyad}}(H)\quad\text{if }H<\frac12,
  \qquad
  C_r(H)<C_{\mathrm{dyad}}(H)\quad\text{if }H>\frac12.
\]
With such an $r$ fixed, one may take, for $j\ge1$,
\begin{equation}
  \pi^H_{2j-1}:=\mathbb V_{jr}^{(r)},
  \qquad
  \pi^H_{2j}:=\mathbb T_{(j+1)r}.
  \label{eq:alternating-partition-main}
\end{equation}
The odd terminal sums then converge almost surely to $C_r(H)$ and the even
terminal sums to $C_{\mathrm{dyad}}(H)$.
\end{theorem}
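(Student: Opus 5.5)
The plan is to reduce the $p$-power sums along $\mathbb V_n^{(r)}$ to an empirical average over the $2^n$ dyadic intervals of level $n$. On $I_{n,k}=[k2^{-n},(k+1)2^{-n}]$, split $X^H$ into a coarse part (levels $<n$), which is affine on $I_{n,k}$, and a fine part (levels $\ge n$). By self-similarity of \eqref{eq:XH}, the fine part on $I_{n,k}$ equals $2^{-nH}\widetilde X^{H,(n,k)}(2^n\cdot-k)$, where $\widetilde X^{H,(n,k)}$ is a copy of $X^H$ built from the signs of the descendants of $(n,k)$. Write $2^{-nH}W_{n,k}$ for the coarse increment over $I_{n,k}$, so that $W_{n,k}=\sum_{m<n}2^{(m-n)(1-H)}\theta_{m,k_m}\varepsilon_m(k)$, with $\varepsilon_m(k)=\pm1$ read off from the binary digits of $k$. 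Put $F_{n,k}:=\widetilde X^{H,(n,k)}(2^{-r})$. Since the copy vanishes at $0$ and $1$, the two increments over $I_{n,k}$ produce
\[
  [X^H]^{(p)}_{(\mathbb V_n^{(r)})}
  =2^{-n}\sum_{k<2^n} g_r(W_{n,k},F_{n,k}),
  \qquad
  g_r(w,f):=|2^{-r}w+f|^p+|(1-2^{-r})w-f|^p ,
\]
using $2^{-nHp}=2^{-n}$. Moreover, $F_{n,k}$ involves only the finitely many signs at levels $n,\dots,n+r-1$ below $(n,k)$, because deeper tents vanish at level-$r$ dyadic points.

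Next I handle the two sources of randomness separately. First, for every fixed sign array, the map from the digits of $k$ to the pattern $(\theta_{m,k_m}\varepsilon_m(k))_{m<n}$ is a triangular bijection of $\{\pm1\}^n$. Hence the empirical law of $(W_{n,k})_k$ is deterministic, namely the law of $\sum_{m<n}2^{(m-n)(1-H)}Y_m$, which converges to the law of $Z_H$ (this is the mechanism behind \eqref{eq:known-dyadic-limit}). Second, the $F_{n,k}$ are i.i.d. in $k$, bounded, independent of all coarse signs, and have the law of $F:=X^H(2^{-r})$. The summands are uniformly bounded, so conditionally on the levels $<n$ Hoeffding's inequality gives deviations of order $\varepsilon$ with probability $\le 2e^{-c\varepsilon^22^n}$. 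Borel--Cantelli then yields almost surely
\[
  [X^H]^{(p)}_{(\mathbb V_n^{(r)})}\to C_r(H):=\E\,g_r(Z_H,F),
\]
with $Z_H$ independent of $F$, after checking that the normalization reproduces $C_{\mathrm{dyad}}(H)$ when $r=0$. As a sanity check, Lemma~\ref{lem:finite-template} should give the same value for the dyadic terminal sums.

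The main obstacle is the strict comparison of $C_r(H)$ with $C_{\mathrm{dyad}}(H)$. I will not look for a closed form; instead I will show it for $r$ large by an asymptotic expansion as $r\to\infty$. In $g_r$, the first term is a short-interval term of order $2^{-r}$. Since $F=O(2^{-rH})$, it equals $2^{-r}$ times a dyadic-type constant $c_1$ plus $o(2^{-r})$ for $H>1/2$; for $H<1/2$ it is dominated anyway. For the second term I Taylor expand in $f$ and in $2^{-r}$, using that $F$ is symmetric and independent of $Z_H$:
\[
  \E|(1-2^{-r})Z_H-F|^p
  =(1-p2^{-r})\E|Z_H|^p+\tfrac{p(p-1)}2\E|Z_H|^{p-2}\,\E F^2+\text{h.o.t.},
\]
where $\E F^2\asymp2^{-2rH}$. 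For $H<1/2$ the positive term $2^{-2rH}$ dominates every $O(2^{-r})$ term, so $C_r>C_{\mathrm{dyad}}$. For $H>1/2$ (so $1<p<2$), the variance term is $o(2^{-r})$, and the leading correction is $2^{-r}\bigl(c_1-p\,C_{\mathrm{dyad}}\bigr)$, up to the normalization. I expect $c_1\approx C_{\mathrm{dyad}}$, since the short interval again behaves like a dyadic piece, and this makes the correction negative because $p>1$. The delicate points are the uniform control of the remainders and, for $p<2$, the integrability of $|Z_H|^{p-2}$. For the latter I plan to use that $Z_H$ has no atom and a bounded density near $0$, or else to expand with the smooth convolution by $F$ kept inside the expectation.

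Finally, the alternating sequence is refining. Points of $\mathbb V_{jr}^{(r)}$ are multiples of $2^{-(j+1)r}$, so $\mathbb V^{(r)}_{jr}\subseteq\mathbb T_{(j+1)r}$. In turn $\mathbb T_{(j+1)r}\subseteq\mathbb V^{(r)}_{(j+1)r}$, and both meshes vanish. The odd terms converge almost surely to $C_r(H)$ by the above, and the even terms to $C_{\mathrm{dyad}}(H)$ by \eqref{eq:known-dyadic-limit}. Since $C_r(H)\neq C_{\mathrm{dyad}}(H)$, the sums fail to converge almost surely. In particular $X^H$ has no continuous critical $p$-th variation along $\pi^H$.
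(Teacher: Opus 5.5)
\paragraph{Overall assessment.} Your architecture is essentially the paper's. The paper also decomposes into dyadic cells, uses the deterministic empirical law of the coarse slopes, and uses conditionally independent fine parts for almost-sure convergence; your Hoeffding step replaces its conditional second moment and Tonelli, which is equally fine. It also expands as $r\to\infty$ with a second-order Taylor argument for $H<1/2$, and your refining check is the same. The case $H<1/2$ is essentially complete.

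\paragraph{The sign for $H>1/2$ is not established.} This is exactly where the sign is decided. Your leading correction is $2^{-r}\bigl(c_1-pC_{\mathrm{dyad}}(H)\bigr)$. Since $p=1/H$ can be arbitrarily close to $1$, you need $c_1<pC_{\mathrm{dyad}}(H)$ for all such $p$. ``I expect $c_1\approx C_{\mathrm{dyad}}$'' does not give this. You have not even shown that the short-interval term has the form $c_12^{-r}+o(2^{-r})$.

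The missing fact is an exact identity. Since $[0,2^{-r}]$ is a dyadic cell,
\[
X^H_{2^{-r}}=2^{-rH}\sum_{j=1}^r\lambda^j\theta_{r-j,0}\stackrel d=2^{-rH}U_r,\qquad \lambda=2^{H-1},\quad U_r=\sum_{j=1}^r\lambda^j\xi_j .
\]
Because $2^{-r}=2^{-rH}\lambda^r$ and $U_r+\lambda^rW\stackrel d=W$ for independent $W$, you get
\[
\E|2^{-r}W+X^H_{2^{-r}}|^p=2^{-r}\E|W|^p=2^{-r}C_{\mathrm{dyad}}(H)
\]
exactly. Hence $c_1=C_{\mathrm{dyad}}(H)$. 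This identity depends on the correct normalization. Your coarse slopes $\sum_{j=1}^n\lambda^j(\pm1)$ converge in law to $W=\lambda Z_H$, not to $Z_H$; with $Z_H$ the dyadic sums would give $2^{p-1}C_{\mathrm{dyad}}(H)$. So your limit should be $C_r(H)=\E g_r(W,F)$.

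\paragraph{Remainder control for $1<p<2$.} This step is also not in hand. You plan a second-order expansion involving $\E|Z_H|^{p-2}$, justified by a bounded density near $0$. But for $H>1/2$ the law of $W$ is a Bernoulli convolution with ratio $\lambda\in(2^{-1/2},1)$. It is singular whenever $1/\lambda$ is a Pisot number, for example when $2^{1-H}$ is the plastic number ($H\approx0.594$). So in general there is no density at all.

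You need neither a density nor integrability of $|W|^{p-2}$. Here is the route the paper takes:
\begin{enumerate}
\item By symmetry of $F$, write the second term as $(1-2^{-r})^p\E|W+c_rU_r|^p$ with $c_r=2^{-rH}/(1-2^{-r})$.
\item Use the global bound $\bigl||x+y|^p-|x|^p-p|x|^{p-2}xy\bigr|\le K_p|y|^p$, which follows from $(p-1)$-H\"older continuity of the derivative. It dominates the quotients of the remainder by $c_r^p$.
\item On $\{W\ne0\}$ these quotients are $O_W(c_r^{2-p})\to0$ by the ordinary Taylor expansion.
\item The linear term vanishes by independence and $\E U_r=0$.
\item Dominated convergence then gives $\E|W+c_rU_r|^p=\E|W|^p+o(c_r^p)=\E|W|^p+o(2^{-r})$, using only that $W$ has no atoms.
\end{enumerate}
You should prove the absence of atoms. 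The paper does it via $W\stackrel d=\lambda(Y+W')$ and a maximal-atom argument. Combined with $c_1=C_{\mathrm{dyad}}(H)$, this yields
\[
C_r(H)-C_{\mathrm{dyad}}(H)=(1-p)2^{-r}C_{\mathrm{dyad}}(H)+o(2^{-r})<0 .
\]
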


Thus $H=1/2$ is the only index in this family at which critical variation
is stable along every fixed deterministic refining sequence.  The theorem
uses a split at an $H$-dependent dyadic fraction.  At $H=1/4$, the following
example identifies a competing limit explicitly on the simpler uniform
thirds grids.

\begin{proposition}[Explicit fourth-variation example]
  \label{prop:thirds-fourth}
Let $H=1/4$, and let $(\mathbb U_n)$ be the uniform thirds grids defined
in Theorem~\ref{thm:biased-signs}. Then
\begin{equation}
  [X^{1/4}]_{(\mathbb U_n)}^{(4)}
  \longrightarrow
  C_{\mathrm{thirds}}
  :=\frac{1301+1020\sqrt2}{1323}
  \qquad\text{almost surely}.
  \label{eq:thirds-limit}
\end{equation}
This differs from the dyadic value:
\begin{equation}
  C_{\mathrm{dyad}}(1/4)
  =\frac{13+12\sqrt2}{49},
  \qquad
  C_{\mathrm{thirds}}-C_{\mathrm{dyad}}(1/4)
  =\frac{950+696\sqrt2}{1323}>0.
  \label{eq:different-constants}
\end{equation}
\end{proposition}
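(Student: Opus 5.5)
Fix $n$ and take an interval $I=[j2^{-n}/3,(j+1)2^{-n}/3]$ of $\mathbb U_n$, contained in the dyadic interval $J=[k2^{-n},(k+1)2^{-n}]$. Split $X^{1/4}=X^{\le n-1}+X^{\ge n}$ according to level. On $J$ the coarse part $X^{\le n-1}$ is affine, with slope $s_J=\sum_{m<n}2^{m/4}2^{m/2}\theta_{m,k_m}\epsilon_m$, where $\epsilon_m=\pm1$ is determined by the position of $J$ in its level-$m$ parent. Scaling by $2^{n}$ in time and $2^{-n/4}$ in space maps the fine part $X^{\ge n}$ on $J$ onto an independent copy $\tilde X$ of $X^{1/4}$ on $[0,1]$, driven by the signs below $J$. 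The same scaling turns the coarse increment over $I$ into $2^{-3n/4}s_J/3=2^{-n/4}\cdot\frac13 Z_J$, where
\[
Z_J=2^{-3n/4}s_J=\sum_{m<n}2^{(m-n)3/4}\theta_{m,k_m}\epsilon_m .
\]
This is a sum $\sum_{\ell\ge1}2^{-3\ell/4}Y_\ell$ with Rademacher $Y_\ell$, so $Z_J$ is a truncated copy of $Z_{1/4}$ shifted by one level: $Z_J\stackrel{d}{=}2^{-3/4}Z_{1/4}$ up to an exponentially small tail. The fourth-power sum is therefore
\[
[X^{1/4}]^{(4)}_{(\mathbb U_n)}=2^{-n}\sum_{J}\ \sum_{i=0}^{2}\Bigl(\tfrac13 Z_J+\Delta_i\tilde X_J\Bigr)^4 ,
\]
where $\Delta_i\tilde X_J$ is the increment of $\tilde X_J$ over $[i/3,(i+1)/3]$. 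This is an average over $2^n$ blocks of $F(Z_J,\tilde X_J)$, with $\tilde X_J$ independent of $Z_J$.

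The first step is the expectation. Expand $(a+b)^4$ and use the symmetry of $\tilde X$ together with its independence from $Z_J$, so that the odd terms drop out:
\[
\E\sum_i(\tfrac13Z+\Delta_i\tilde X)^4=\tfrac{3}{81}\E Z^4+\tfrac{6}{9}\E Z^2\sum_i\E(\Delta_i\tilde X)^2+\sum_i\E(\Delta_i\tilde X)^4 .
\]
The moments of $Z=2^{-3/4}Z_{1/4}$ follow from $\E Z^2=\sum 2^{-3\ell/2}$ and $\E Z^4=3(\E Z^2)^2-2\sum2^{-3\ell}$. The increments of $\tilde X$ over the thirds $[0,\frac13],[\frac13,\frac23],[\frac23,1]$ are handled by self-similarity. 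Writing $\tilde X=\theta e_{0,0}+\frac{1}{2^{1/4}}(\text{rescaled copies on halves})$, the point $1/3$ maps under doubling to $2/3$ and $2/3$ maps to $1/3$. This gives closed linear recursions for $a=\E\tilde X_{1/3}^2$ and for the fourth moments $\E\tilde X_{1/3}^4$, $\E\tilde X_{1/3}^2\tilde X_{2/3}^2$ and the like, since the binary expansion of $1/3$ is periodic with period 2. Solving these recursions gives the fourth moments of the three increments. Combining everything should produce $(1301+1020\sqrt2)/1323$. The $\sqrt2$ terms come from $2^{\pm1/2}$ factors in the geometric sums with ratio $2^{-3/2}$ and its square, which is why the denominator carries factors of $7=8-1$, as in $49$ and $1323=27\cdot49$.

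The second step is almost-sure convergence. The summands $F(Z_J,\tilde X_J)$ are bounded uniformly, since the paths are $1/4$-Hölder with a deterministic constant. They are not independent, because different $J$ share coarse signs. For concentration I plan to compute the variance $\mathrm{Var}\bigl(2^{-n}\sum_JF_J\bigr)$. Two blocks $J,J'$ whose common ancestor sits at level $n-\ell$ share only signs at levels below $n-\ell$, and these enter $Z_J$ with weights $O(2^{-3\ell/4})$. Hence $|\mathrm{Cov}(F_J,F_{J'})|\lesssim2^{-3\ell/4}$ (a Lipschitz coupling argument: replace the shared signs by independent ones). There are about $2^{n}2^{\ell}$ such pairs, so the variance is $\lesssim2^{-n}\sum_\ell2^{\ell/4}\lesssim2^{-3n/4}$, which is summable in $n$. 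Chebyshev and Borel–Cantelli then give almost-sure convergence, and the truncation error in $Z_J$ vanishes deterministically. If an earlier concentration or template lemma, such as Lemma~\ref{lem:finite-template}, is available for grids built from a fixed pattern inside dyadic blocks, I will invoke it instead.

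The main obstacle is the exact moment bookkeeping: solving the self-similar recursions for the joint fourth moments of $(\tilde X_{1/3},\tilde X_{2/3})$ and assembling the constant. The final difference in \eqref{eq:different-constants} is then arithmetic, using $C_{\mathrm{dyad}}(1/4)=2^{-3}\E Z_{1/4}^4$ computed from the same moment formula.
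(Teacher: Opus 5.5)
Your proposal is correct and follows essentially the paper's route: almost-sure convergence comes from Lemma~\ref{lem:finite-template} with the template $\{0,1/3,2/3,1\}$ (so your covariance-decay argument is unnecessary, though it also works), and the constant comes from the independence of $W=2^{-3/4}Z_{1/4}$ from $X^{1/4}$ together with the period-two dyadic orbit of $1/3$. The paper unrolls your recursion into $X^{1/4}_{1/3}=\frac13\sum_{m\ge0}2^{-m/4}\xi_m$ and applies $\E(\sum_i a_i\eta_i)^4=3(\sum_i a_i^2)^2-2\sum_i a_i^4$ to the full increments instead of expanding binomially, but your bookkeeping closes and gives the stated value: $\E(X^{1/4}_{1/3})^2=(2+\sqrt2)/9$, $\E(X^{1/4}_{1/3})^4=(14+12\sqrt2)/81$ and $\E(X^{1/4}_{2/3}-X^{1/4}_{1/3})^4=(32+24\sqrt2)/81$, so your three terms are $(39+36\sqrt2)/3969$, $(924+672\sqrt2)/3969$ and $(2940+2352\sqrt2)/3969$, which sum to $(1301+1020\sqrt2)/1323$.
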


\begin{remark}
Proposition~\ref{prop:thirds-fourth} makes the partition dependence concrete
at $H=1/4$: the dyadic and uniform thirds grids yield different
fourth-variation constants.  We do not claim that the thirds-grid constant
differs from $C_{\mathrm{dyad}}(H)$ for every $H\ne1/2$;
Theorem~\ref{thm:all-H-failure} establishes the general result through the
$H$-dependent split grids.
\end{remark}

\subsection{Invariance of variation index across partition}\label{sec:positive-critical}

Theorem~\ref{thm:all-H-failure} shows that the value of the critical
variation depends on the partition sequence when $H\ne1/2$. The results
of this subsection show that the critical generalized $p$-th variations are uniformly bounded
above over all partitions, for every sign array. For each fixed
deterministic sequence satisfying \eqref{eq:mesh-condition-critical},
their fluctuations around their expectations vanish almost surely,
their lower limit is positive almost surely, and the variation index is
$1/H$. Under this mesh condition, convergence of the terminal critical
sums is therefore entirely a first-moment property. The probability-one
event may depend on the prescribed sequence; no balance assumption is
needed.

Put 
\begin{equation}
  \kappa_H:=\min\{1,\,2-2H\}\in(0,1],
  \qquad H\in(0,1).
  \label{eq:kappa-H}
\end{equation}

\begin{proposition}[Deterministic upper bound]
\label{prop:deterministic-upper}
Let $H\in(0,1)$, $p=1/H$, and let $L_H$ be the constant of
Lemma~\ref{lem:uniform-holder}. For every finite partition $\tau$ of
$[0,1]$ and every sign array,
\begin{equation}
  [X^H]^{(p)}_{(\tau)}\le L_H^{\,p}.
  \label{eq:deterministic-upper}
\end{equation}
Consequently, for every partition sequence $\pi$ with vanishing mesh and
every sign array, $[X^H]^{(q)}_{(\pi_n)}\to0$ for all $q>p$, and
$p^\pi(X^H)\le1/H$.
\end{proposition}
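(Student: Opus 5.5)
The plan is to reduce everything to a uniform H\"older bound that holds for every sign array. I take Lemma~\ref{lem:uniform-holder} to state that there is a constant $L_H<\infty$, depending only on $H$, such that
\[
  |X^H_t-X^H_s|\le L_H|t-s|^H,\qquad s,t\in[0,1],
\]
for every realization of the signs. If that lemma were phrased differently, I would rederive the bound directly. At level $n$ the summand in \eqref{eq:XH} has sup norm at most $2^{-nH-1}$. Each tent $e_{n,k}$ has slope $\pm2^{n/2}$, so after multiplying by $2^{n(1/2-H)}$ the level-$n$ function is Lipschitz with constant $2^{n(1-H)}$, since the supports at a fixed level do not overlap. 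Given $|t-s|\in(2^{-N-1},2^{-N}]$, I would use the Lipschitz bound for levels $n\le N$, which gives a geometric sum of order $2^{N(1-H)}|t-s|\lesssim|t-s|^H$. For levels $n>N$ I would use the sup-norm bound, which gives a geometric sum of order $2^{-NH}\lesssim|t-s|^H$. Both geometric series converge because $0<H<1$, and nothing in this argument depends on the signs.

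With the H\"older bound in hand, \eqref{eq:deterministic-upper} is a one-line computation. Since $pH=1$, for $\tau=\{t_0<\cdots<t_N\}$ we get
\[
  [X^H]^{(p)}_{(\tau)}
  \le L_H^{p}\sum_{i}(t_{i+1}-t_i)^{pH}
  =L_H^{p}\sum_i(t_{i+1}-t_i)=L_H^{p}.
\]

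For the consequences, take $q>p$. I would apply the interpolation inequality displayed before \eqref{eq : limsup q variation} with exponents $p<q$, together with $\omega_{X^H}(\delta)\le L_H\delta^H$. This gives
\[
  [X^H]^{(q)}_{(\pi_n)}
  \le \bigl(L_H\,\mesh(\pi_n)^H\bigr)^{q-p}L_H^{p},
\]
and the right-hand side tends to $0$ because the mesh vanishes. Hence every $q>p$ belongs to the set in Definition~\ref{def. variation index}, so its infimum satisfies $p^\pi(X^H)\le p=1/H$.

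There is no real obstacle here once Lemma~\ref{lem:uniform-holder} is available. The only points needing care are that the H\"older constant is uniform over all sign arrays, and that the exact identity $pH=1$ is what makes the sum telescope to the total length $1$.
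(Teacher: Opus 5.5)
Your proposal is correct and follows the paper's proof essentially step for step. The paper also combines the sign-independent H\"older bound of Lemma~\ref{lem:uniform-holder} with $pH=1$ to bound the critical sum by $L_H^p$. It then uses the interpolation inequality with $\omega_{X^H}(\delta)\le L_H\delta^H$ to get $[X^H]^{(q)}_{(\pi_n)}\le L_H^{q}\mesh(\pi_n)^{(q-p)H}\to0$, which gives $p^\pi(X^H)\le 1/H$.
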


\begin{theorem}[Asymptotic determinism of the critical sums]
\label{thm:critical-concentration}
Let $H\in(0,1)\setminus\{1/2\}$, $p=1/H$, and let $\pi$
be a fixed deterministic partition sequence with
\begin{equation}
  \mesh(\pi_n)=o\bigl((\log n)^{-1/\kappa_H}\bigr).
  \label{eq:mesh-condition-critical}
\end{equation}
Then there is a constant $c_H>0$, depending only on $H$, such that for all
sufficiently large $n$,
\begin{equation}
  c_H\le\E\,[X^H]^{(p)}_{(\pi_n)}\le L_H^{\,p},
  \label{eq:critical-mean-bounds}
\end{equation}
and
\begin{equation}
  [X^H]^{(p)}_{(\pi_n)}-\E\,[X^H]^{(p)}_{(\pi_n)}\longrightarrow0
  \qquad\text{almost surely}.
  \label{eq:critical-asymptotic-determinism}
\end{equation}
In particular, almost surely,
\begin{equation}
  c_H\le\liminf_{n\to\infty}[X^H]^{(p)}_{(\pi_n)}
  \le\limsup_{n\to\infty}[X^H]^{(p)}_{(\pi_n)}\le L_H^{\,p},
  \label{eq:critical-liminf-limsup}
\end{equation}
and $p^\pi(X^H)=1/H$.
\end{theorem}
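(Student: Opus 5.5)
The plan has three parts: a deterministic upper bound on the mean, a lower bound on the mean via a single tent level, and bounded-difference concentration followed by Borel--Cantelli.

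\textbf{Upper bound on the mean.} The bound $\E\,[X^H]^{(p)}_{(\pi_n)}\le L_H^{\,p}$ follows at once by taking expectations in Proposition~\ref{prop:deterministic-upper}.

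\textbf{Lower bound on the mean.} For an interval $[t_i,t_{i+1}]$ of $\pi_n$ with length $h$, choose the level $m$ with $2^{-m-1}<h\le2^{-m}$. Write the increment as
\[
  \Delta_i:=X^H(t_{i+1})-X^H(t_i)
  =\sum_{(\ell,k)} a_{\ell,k}\,\theta_{\ell,k}.
\]
This is a Rademacher sum. Apply Khintchine's inequality in the form $\E|\Delta_i|^p\ge c_p\,(\E\Delta_i^2)^{p/2}$. It then suffices to find a lower bound $\E\Delta_i^2\ge c\,h^{2H}$ for a positive proportion of intervals, weighted by length. To get it, I single out the coefficients at the level nearest to $h$ and use $\E\Delta_i^2=\sum a_{\ell,k}^2$. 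For at least one tent $e_{\ell,k}$ at level $\ell\approx m+2$ whose support meets the interval substantially, the coefficient satisfies $|a_{\ell,k}|\gtrsim h^H$. Choosing the level shift so that a tent of comparable size fits well inside $[t_i,t_{i+1}]$, or straddles an endpoint, gives this. Summing $h^{2H\cdot p/2}=h$ over all intervals then yields $c_H$. This bound works for every partition and does not use the mesh condition.

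\textbf{Concentration.} View $F:=[X^H]^{(p)}_{(\pi_n)}$ as a function of the independent signs, and bound the effect of flipping one sign $\theta_{\ell,k}$. Flipping it changes the path by $2\cdot2^{\ell(1/2-H)}e_{\ell,k}$, which has sup norm $2^{-\ell H}$. The support has length $2^{-\ell}$ and meets about $2^{-\ell}/\delta_n+2$ partition intervals, where $\delta_n$ denotes the local interval lengths. Each increment is at most $L_H\,h^H$ by Lemma~\ref{lem:uniform-holder}, and $|\,|a|^p-|b|^p|\le p(|a|+|b|)^{p-1}|a-b|$. For $H<1/2$ (so $p>2$) this gives a per-coordinate influence $D_{\ell,k}$ with
\[
  \sum_{\ell,k}D_{\ell,k}^2\lesssim\mesh(\pi_n)^{\kappa_H}
\]
up to constants. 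For $H>1/2$, $p<2$ and $|x|^p$ is only Hölder near $0$, so the crude bound degrades to exponent $2-2H$; this explains $\kappa_H=\min\{1,2-2H\}$. McDiarmid's inequality (or Efron--Stein plus exponential Rademacher bounds) then gives
\[
  \Pp\bigl(|F-\E F|>\varepsilon\bigr)\le2\exp\bigl(-c\,\varepsilon^2\,\mesh(\pi_n)^{-\kappa_H}\bigr).
\]
Under \eqref{eq:mesh-condition-critical} we have $\mesh(\pi_n)^{-\kappa_H}=\omega(\log n)$, so the right-hand side is summable in $n$. Borel--Cantelli then yields \eqref{eq:critical-asymptotic-determinism}.

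\textbf{Conclusion and main obstacle.} Combining the two mean bounds with \eqref{eq:critical-asymptotic-determinism} gives \eqref{eq:critical-liminf-limsup}. The liminf is positive and finite, so by \eqref{eq : limsup q variation} we get $p^\pi(X^H)\ge1/H$. Together with Proposition~\ref{prop:deterministic-upper} this gives equality. The main obstacle is the bounded-difference estimate: summing $D_{\ell,k}^2$ over levels coarser than the mesh, where one flip affects many intervals, and making the geometric series in $\ell$ produce exactly $\mesh^{\kappa_H}$. For this I would split the sum at the level $\ell_n$ with $2^{-\ell_n}\approx\mesh(\pi_n)$. For coarse levels, I would bound $D_{\ell,k}$ by the first-order change $p\sum_i|\Delta_i|^{p-1}|\delta_i e|$. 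Using the Lipschitz bound $2^{\ell(1-H)}\cdot|I_i|$ on the tent increment, this gives $D_{\ell,k}\lesssim2^{\ell(1-H)}\mesh^{1-H}\cdot2^{-\ell}$, and squaring and summing over the $2^\ell$ tents at each level gives the stated rate.
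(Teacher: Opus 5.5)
Your architecture matches the paper's: Proposition~\ref{prop:deterministic-upper} for the upper bound, a lower bound $\E\Delta_I^2\gtrsim|I|^{2H}$ turned into $\E|\Delta_I|^p\gtrsim|I|$, then McDiarmid and Borel--Cantelli. Khintchine is a fine substitute for the paper's Jensen/H\"older step.

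The step that fails as written is the variance lower bound. At level $\ell\approx m+2$ the tents are shorter than $I=(t_i,t_{i+1}]$. A tent supported inside $I$ vanishes at both endpoints, so its coefficient $a_{\ell,k}=2^{\ell(1/2-H)}\bigl(e_{\ell,k}(t_{i+1})-e_{\ell,k}(t_i)\bigr)$ is zero. A fine tent containing an endpoint has $|a_{\ell,k}|=2^{-\ell H}\operatorname{dist}(2^\ell t_i,\mathbb Z)$, which vanishes at dyadic endpoints. For $I=(0,2^{-m}]$, every coefficient at every level $\ell\ge m$ is zero.

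The large coefficient must come from coarser levels. Let $2^{-N-1}<h\le2^{-N}$ and $\ell\le N-3$. Then $I$ does one of three things:
\begin{enumerate}[(i)]
\item it lies on one monotone piece of a single level-$\ell$ tent, giving coefficient $2^{\ell(1-H)}h$;
\item it straddles a cell endpoint, giving one coefficient of size at least $2^{\ell(1-H)}h/2$;
\item it contains that tent's peak.
\end{enumerate}
$I$ contains at most one dyadic point of level $\le N-2$, so case (iii) occurs at one level at most. Hence one of the levels $N-3$, $N-4$ gives $|a_{\ell,k}|\gtrsim h^H$. Lemma~\ref{lem:increment-second-moment} runs exactly this argument in Haar form, summing over all coarse levels. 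Note also that the bound does not hold ``for every partition'': for $I=(0,1-\varepsilon]$ the variance is at most $L_H^2\varepsilon^{2H}$. You need $\mesh(\pi_n)$ bounded away from $1$, which is harmless for large $n$.

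In the concentration step, your coarse-level estimate $D_{\ell,k}\lesssim\mesh(\pi_n)^{1-H}2^{-\ell H}$ holds for arbitrary partitions. It gives $\mesh(\pi_n)^{2-2H}\sum_{\ell\le\ell_n}2^{\ell(1-2H)}$, which is $O(\mesh(\pi_n))$ for $H<1/2$ and $O(\mesh(\pi_n)^{2-2H})$ for $H>1/2$. This convergent coarse sum is the actual source of $\kappa_H=2-2H$. The behaviour of $|x|^p$ at $0$ plays no role, since the mean-value bound holds for all $p\ge1$.

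What is missing is the fine levels $2^{-\ell}<\mesh(\pi_n)$. Your count of ``about $2^{-\ell}/\delta_n+2$'' intervals per tent presupposes locally comparable interval lengths, but the theorem allows arbitrary partitions. If you bound $|I|^{1-H}\le\mesh(\pi_n)^{1-H}$ there, you are left with the number of level-$\ell$ tents containing partition points, which the mesh does not control. You must keep the weights $|I|^{1-H}$:
\begin{enumerate}[(i)]
\item Intervals with $|I|\le2^{-\ell}$ contribute $O(2^{-\ell})$ to each $D_{\ell,k}$. After squaring and summing over $k$ and fine $\ell$, this gives $O(\mesh(\pi_n))$.
\item An interval with $|I|>2^{-\ell}$ affects at most two tents per level, each by at most a constant times $|I|^{1-H}2^{-\ell H}$. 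Summing the squares over $\ell$ gives $O(\sum_I|I|^2)=O(\mesh(\pi_n))$.
\end{enumerate}
The paper gets this bound in one step by a weighted Cauchy--Schwarz inequality, using $\sum_I|\Delta_If_{m,k}|\le2^{-mH}$ and $\sum_k|\Delta_If_{m,k}|\le\min\{2^{m(1-H)}|I|,2^{-mH}\}$.
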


For $H<1/2$ the mesh condition \eqref{eq:mesh-condition-critical} is
Dudley's condition $\mesh(\pi_n)=o(1/\log n)$ of
Theorem~\ref{thm:sharp-logarithmic-mesh}(a). For $H>1/2$, the
bounded-differences estimate below decays as $\mesh(\pi_n)^{2-2H}$,
which gives the stronger sufficient condition
$\mesh(\pi_n)=o((\log n)^{-1/(2-2H)})$.

\begin{corollary}[Convergence is a first-moment property]
\label{cor:first-moment}
Under the assumptions of Theorem~\ref{thm:critical-concentration}, the
following are equivalent:
\begin{enumerate}[{\rm (i)}]
\item $[X^H]^{(p)}_{(\pi_n)}$ converges almost surely;
\item $[X^H]^{(p)}_{(\pi_n)}$ converges in probability;
\item the deterministic sequence $\E\,[X^H]^{(p)}_{(\pi_n)}$ converges.
\end{enumerate}
When they hold, the almost-sure limit is $\lim_n\E[X^H]^{(p)}_{(\pi_n)}$.
\end{corollary}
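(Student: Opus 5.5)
The corollary follows directly from Theorem~\ref{thm:critical-concentration}. Write $S_n:=[X^H]^{(p)}_{(\pi_n)}$ and $a_n:=\E S_n$. By \eqref{eq:critical-asymptotic-determinism}, $D_n:=S_n-a_n\to0$ almost surely, and $a_n$ is a deterministic real sequence, bounded by \eqref{eq:critical-mean-bounds}. The plan is to prove (i)$\Rightarrow$(ii)$\Rightarrow$(iii)$\Rightarrow$(i). Throughout, we use only the decomposition $S_n=a_n+D_n$ with $D_n\to0$ almost surely, and hence also in probability.

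The implication (i)$\Rightarrow$(ii) is standard, since almost-sure convergence implies convergence in probability.

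For (ii)$\Rightarrow$(iii), suppose $S_n\to S$ in probability for some random variable $S$. Then $a_n=S_n-D_n\to S$ in probability. We now show that a deterministic sequence converging in probability must converge, and that its limit is an almost-sure constant.
\begin{enumerate}[(1)]
\item Take two subsequences of $(a_n)$ converging to accumulation points $\alpha$ and $\beta$. These exist and are finite because $(a_n)$ is bounded.
\item Along each subsequence, $a_n\to S$ in probability. Pass to further subsequences that converge to $S$ almost surely.
\item On these, $a_n\to S$ almost surely, which forces $S=\alpha$ almost surely and $S=\beta$ almost surely. Hence $\alpha=\beta$.
\end{enumerate}
So the bounded sequence $(a_n)$ has a unique accumulation point and converges, with $\lim a_n=S$ almost surely.

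For (iii)$\Rightarrow$(i), if $a_n\to a$, then $S_n=a_n+D_n\to a$ on the probability-one event where $D_n\to0$.

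Finally, the identification of the limit follows. Whenever (i) holds, (iii) holds as well, and then $S_n\to\lim_n a_n$ almost surely, which gives the stated limit $\lim_n\E[X^H]^{(p)}_{(\pi_n)}$.

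The only point needing care is the step in (ii)$\Rightarrow$(iii) that the in-probability limit of the deterministic sequence $(a_n)$ is a constant. The subsequence argument handles this, and the boundedness from \eqref{eq:critical-mean-bounds} rules out divergence to infinity. Otherwise the argument is routine, because all the substantive work is contained in Theorem~\ref{thm:critical-concentration}.
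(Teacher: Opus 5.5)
Your proposal is correct and follows the paper's proof essentially line by line: the same cycle (i)$\Rightarrow$(ii)$\Rightarrow$(iii)$\Rightarrow$(i), driven by writing $S_n=\E S_n+(S_n-\E S_n)$, where the second term tends to zero almost surely by \eqref{eq:critical-asymptotic-determinism}. The only difference is that you prove, by a subsequence argument, that a deterministic sequence converging in probability converges, which the paper simply asserts; your use of boundedness there is harmless but not actually needed.
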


Thus the limits in Theorem~\ref{thm:all-H-failure} and
Proposition~\ref{prop:thirds-fourth} can be identified through first
moments. For the sequence \eqref{eq:alternating-partition-main},
\[
  \mesh(\pi^H_{2j-1})=(1-2^{-r})2^{-jr},
  \qquad
  \mesh(\pi^H_{2j})=2^{-(j+1)r},
\]
so \eqref{eq:mesh-condition-critical} holds. Its terminal sums fail to
converge because their expectations converge along the odd and even
subsequences to the distinct constants $C_r(H)$ and
$C_{\mathrm{dyad}}(H)$, respectively. Similarly, the thirds grids satisfy
the mesh condition. Conversely, for each fixed deterministic sequence
satisfying \eqref{eq:mesh-condition-critical},
\eqref{eq:critical-liminf-limsup} places all subsequential limits in
$[c_H,L_H^p]$ almost surely.

The reciprocal $H$ of the variation index $1/H$ is the roughness exponent
in the convention of Han and Schied~\cite{HanSchied2025}. It is also the
Besov smoothness exponent $1/p$ appearing in
\cite[Lemma~3.2]{Cont2026}. That lemma assumes intrinsic $p$-roughness,
a property not established by the present concentration theorem. Here
the conclusion is that the variation index equals $1/H$ almost surely
along each fixed deterministic sequence satisfying
\eqref{eq:mesh-condition-critical}.

\section{Proofs}
\label{sec:proofs}

We now prove the results stated in Section~\ref{sec:main-results}.

\subsection{Probabilistic and regularity estimates}

We first record the moment inequalities used in the refining-partition
argument and a deterministic regularity estimate.

Let $\theta_1,\theta_2,\ldots$ be independent Rademacher variables.  A \emph{finite homogeneous Rademacher chaos of order two} is a
random variable of the form
\begin{equation}
  F=\sum_{1\le i<j\le N}a_{ij}\theta_i\theta_j,
  \qquad a_{ij}\in\mathbb R.
  \label{eq:order-two-chaos}
\end{equation}
Thus every monomial contains exactly two distinct Rademacher variables; in
particular, there are no constant, linear, or diagonal terms.  We use the
same terminology for an $L^2$ limit of finite chaoses of this form,
equivalently for an expansion
\[
  F=\sum_{i<j}a_{ij}\theta_i\theta_j,
  \qquad \sum_{i<j}a_{ij}^2<\infty,
\]
where the series converges in $L^2$.

\begin{lemma}[Rademacher hypercontractivity]\label{lem:bonami}
Let $F$ be a homogeneous Rademacher chaos of order two. Then
\[
  \lVert F\rVert_{L^4}\le 3\lVert F\rVert_{L^2}.
\]
\end{lemma}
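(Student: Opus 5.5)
The plan is to prove the bound directly by expanding the fourth moment and computing it in terms of the coefficients. Reducing to a finite chaos is harmless: if $F$ is an $L^2$ limit of finite chaoses $F_N$, then the bound for each $F_N$ makes $(F_N)$ Cauchy in $L^4$. Hence $F_N\to F$ in $L^4$ and the inequality passes to the limit. So assume $F=\sum_{i<j}a_{ij}\theta_i\theta_j$ is a finite sum. Symmetrize by setting $a_{ji}:=a_{ij}$ and $a_{ii}:=0$, so that $F=\tfrac12\sum_{i\ne j}a_{ij}\theta_i\theta_j$. Orthonormality of the products $\theta_i\theta_j$ gives $\E F^2=\sum_{i<j}a_{ij}^2=:\sigma^2$.

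Next expand
\[
  \E F^4=\sum_{e_1,e_2,e_3,e_4}a_{e_1}a_{e_2}a_{e_3}a_{e_4}\,\E[\theta_{e_1}\theta_{e_2}\theta_{e_3}\theta_{e_4}],
\]
where each $e_k$ is an unordered pair $\{i<j\}$ and $\theta_e$ denotes the corresponding product. The expectation equals $1$ exactly when every index occurs an even number of times in the multiset union of the four pairs, and it is $0$ otherwise. The surviving configurations fall into two classes. The first consists of pairings of the edges, $e_1=e_2$, $e_3=e_4$ up to relabelling. The second consists of four \emph{distinct} edges forming a $4$-cycle $i\!-\!j\!-\!k\!-\!l\!-\!i$ on four distinct vertices. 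No other configuration is possible: four edges with all vertex degrees even must be a union of doubled edges or a cycle, and on at most four edges the only cycle without repeated edges is a $4$-cycle.

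The pairing terms contribute at most $3\big(\sum_e a_e^2\big)^2=3\sigma^4$, since there are three ways to pair four slots. For the cycle terms, each $4$-cycle on distinct vertices appears with its ordered edge sequences, and the total is bounded by
\[
  C\sum_{i,j,k,l}|a_{ij}a_{jk}a_{kl}a_{li}|=C\,\tr\big(|A|^4\big)\le C\,\|\,|A|\,\|_{\HS}^4 .
\]
Here $|A|$ is the matrix of absolute values, and the last step uses $\tr(M^4)\le(\tr M^2)^2$ for a symmetric matrix $M$. Since $\||A|\|_{\HS}^2=2\sigma^2$, the cycle part is $O(\sigma^4)$.

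Tracking the constant $C$ (the number of orderings of the four edges modulo the $8$-fold overcount of the symmetric sum) yields $\E F^4\le c\,\sigma^4$ with an explicit $c$. This constant bookkeeping is the step I expect to be the main obstacle. The claimed bound only needs $c\le 81$, which is very generous, and a crude count gives roughly $3+24\cdot 4/8\cdot$(small factor), far below $81$.

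Alternatively, and as a cleaner fallback, I would invoke the Bonami--Beckner hypercontractive inequality $\|T_\rho F\|_4\le\|F\|_2$ with $\rho=1/\sqrt3$. For a homogeneous chaos of degree $d$ this gives $\|F\|_4\le 3^{d/2}\|F\|_2$, which for $d=2$ is exactly the constant $3$ in the statement.
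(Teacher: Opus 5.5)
Your proof is correct. Your main route differs from the paper's, which simply cites the Bonami--Beckner estimate $\lVert F\rVert_{L^q}\le(q-1)^{d/2}\lVert F\rVert_{L^2}$ (O'Donnell, Theorem~9.21) at $q=4$, $d=2$. Your fallback is exactly the paper's argument.

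The bookkeeping you flagged as the main obstacle closes cleanly.
\begin{itemize}
\item The pairing configurations contribute exactly $3\sigma^4-2\sum_e a_e^4$.
\item Each $4$-cycle is produced by $4!=24$ ordered edge tuples, while $\sum_{i,j,k,l\text{ distinct}}a_{ij}a_{jk}a_{kl}a_{li}$ counts it $8$ times. So the cycle part equals $3\sum_{i,j,k,l\text{ distinct}}a_{ij}a_{jk}a_{kl}a_{li}$.
\item The absolute value of the cycle part is at most $3\tr(|A|^4)\le3(2\sigma^2)^2=12\sigma^4$.
\end{itemize}
Hence $\E F^4\le15\sigma^4$, that is, $\lVert F\rVert_{L^4}\le15^{1/4}\lVert F\rVert_{L^2}<2\lVert F\rVert_{L^2}$. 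No ``small factor'' is needed. As a check, $F=(\theta_1+\theta_3)(\theta_2+\theta_4)$ gives $40+24=64=\E F^4$.

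The direct route is self-contained and gives a better constant: it would lower $324$ in Lemma~\ref{lem:haar-energy} to $60$. It is, however, tied to order two and the fourth moment. The cited hypercontractivity covers all degrees and moments with no combinatorics, which is all the paper needs.

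One precision in your limit step. The Cauchy property in $L^4$ comes from applying the finite bound to $F_N-F_M$, which is itself a finite order-two chaos, not to each $F_N$ separately. The $L^4$ and $L^2$ limits then coincide, and the inequality passes to $F$; this is how the paper phrases it.
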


For a finite chaos, this is the $q=4$, $d=2$ case of the
Bonami--Beckner hypercontractive inequality.  O'Donnell
\cite[Theorem~9.21]{ODonnell} proves the more general estimate
\[
  \lVert F\rVert_{L^q}
  \le(q-1)^{d/2}\lVert F\rVert_{L^2},
  \qquad q\ge2,
\]
for every Rademacher polynomial of degree at most $d$.  O'Donnell
\cite[Section~9.1]{ODonnell} also gives a direct proof of the $q=4$ case
under the name \emph{Bonami Lemma}.

For an $L^2$ limit of finite chaoses, we apply the finite inequality to the
difference of two finite truncations.  The truncations are then Cauchy in
$L^4$, and passage to the limit yields the stated inequality.

\purba[inline]{ Using \cite[Theorem 1.1]{RudelsonVershynin2013} and the Hanson–Wright tail bound one can prove the following lemma.
\begin{lemma}[Subgaussian hypercontractivity]\label{Lem: sub-gussian hypercontractivity} Let $(\xi_i)_{i\geq 1}$ be independent centered subgaussian random variables with $E[\xi_i^2]=1$ and the subgaussian Orlicz norm is bounded above i.e. $\|\xi_i\|_{\psi_2}\le K$. Then there exists a constant $C_K<\infty$, depending only on $K$, such that every finite degree--2 polynomial 
\[ F = \sum_i a_{ii}(\xi_i^2-1) + \sum_{i<j} a_{ij}\xi_i\xi_j \] satisfies \[ \|F\|_{L^4} \le C_K\, \|F\|_{L^2}. \]  \end{lemma}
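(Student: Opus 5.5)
The plan is to combine the Hanson--Wright tail bound with a direct computation of $\lVert F\rVert_{L^2}$. Write $F=\xi^\top A\xi-\E[\xi^\top A\xi]$, where $A$ is the symmetric matrix with diagonal entries $a_{ii}$ and off-diagonal entries $a_{ij}/2$. By \cite[Theorem~1.1]{RudelsonVershynin2013}, for all $t\ge0$,
\[
  \Pp(|F|>t)\le 2\exp\Bigl(-c\min\Bigl\{\frac{t^2}{K^4\lVert A\rVert_{\HS}^2},\frac{t}{K^2\lVert A\rVert_{\mathrm{op}}}\Bigr\}\Bigr).
\]
Integrating $\E F^4=\int_0^\infty 4t^3\Pp(|F|>t)\,dt$ and using $\lVert A\rVert_{\mathrm{op}}\le\lVert A\rVert_{\HS}$, both regimes contribute at most a constant times $K^8\lVert A\rVert_{\HS}^4$. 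Hence
\[
  \lVert F\rVert_{L^4}\le C'K^2\lVert A\rVert_{\HS}.
\]

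The second step is a lower bound for $\lVert F\rVert_{L^2}$. By independence and centering, the monomials $\xi_i^2-1$ and $\xi_i\xi_j$ ($i<j$) are pairwise orthogonal. For example, $\E[(\xi_i^2-1)\xi_i\xi_j]=\E[\xi_i^3-\xi_i]\,\E\xi_j=0$, and every other mixed product contains some $\xi_k$ to the first power. Therefore
\[
  \E F^2=\sum_i a_{ii}^2\operatorname{Var}(\xi_i^2)+\sum_{i<j}a_{ij}^2 .
\]
The off-diagonal part of this identity already dominates the off-diagonal part of $\lVert A\rVert_{\HS}^2$, up to the factor $2$ coming from the convention $A_{ij}=a_{ij}/2$. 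So the lemma follows with $C_K$ depending only on $K$, once the diagonal terms are controlled as well.

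The diagonal terms are the main obstacle, and I expect the lemma to need an extra hypothesis there. The computation gives only $\sum_i a_{ii}^2\operatorname{Var}(\xi_i^2)$, and $\operatorname{Var}(\xi_i^2)$ is not bounded below by any function of $K$. Indeed, take $\xi=\pm1$ with probability $1-\varepsilon$ and $\xi=\pm a_\varepsilon$ with probability $\varepsilon$, normalized so that $\E\xi^2=1$. This variable has uniformly bounded $\psi_2$-norm, yet $F=\xi^2-1$ satisfies $\lVert F\rVert_{L^4}/\lVert F\rVert_{L^2}\asymp\varepsilon^{-1/4}\to\infty$. So the statement as written cannot hold with $C_K$ depending only on $K$.

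I would repair it in one of two ways. The first is to add the assumption $\operatorname{Var}(\xi_i^2)\ge\sigma^2>0$. The argument above then yields $\E F^2\ge\min\{\sigma^2,1/2\}\lVert A\rVert_{\HS}^2$, and hence $C_{K,\sigma}=C'K^2/\min\{\sigma,2^{-1/2}\}$. The second is to restrict to the diagonal-free case $a_{ii}=0$, which covers Rademacher chaos as in Lemma~\ref{lem:bonami}. In that case $\E F^2=2\lVert A\rVert_{\HS}^2$ and the constant depends on $K$ alone.

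Finally, to pass from finite polynomials to $L^2$-limits, I would apply the inequality to differences of truncations, exactly as in the remark following Lemma~\ref{lem:bonami}.
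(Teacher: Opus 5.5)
Your conclusion agrees with the paper. The paper does not prove this lemma: right after Lemma~\ref{lem:bonami} it refutes it with essentially your counterexample, $\zeta=R\sqrt{1+J_q-q}$, for which $\|\zeta^2-1\|_{L^4}/\|\zeta^2-1\|_{L^2}\sim q^{-1/4}$. What the paper keeps is exactly your first step, the integrated Hanson--Wright bound $\|\zeta^{\mathsf T}B\zeta-\tr B\|_{L^4}\le C_K\|B\|_{\HS}$ in \eqref{eq:subgaussian-operator-moment}.

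One correction to your example. As literally written (values $\pm1$ and $\pm a_\varepsilon$ with $\E\xi^2=1$), it forces $a_\varepsilon=1$, so $\xi^2-1\equiv0$. State it instead as $\xi^2\in\{1-\varepsilon,2-\varepsilon\}$ with probabilities $1-\varepsilon,\varepsilon$ and an independent random sign, which is the paper's example; alternatively, fix $a\ne1$ and rescale.

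Your two repairs are correct but not needed for the paper's purposes. The subgaussian extension of Theorem~\ref{thm:main refining} uses only $\|\widetilde Z_\rho-\widetilde Z_\sigma\|_{L^4}\le C_K\|A_\rho-A_\sigma\|_{\HS}$ and never requires a lower bound on $\|F\|_{L^2}$.
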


}

A uniform subgaussian bound alone does not give the $L^4$--$L^2$ estimate
in Lemma~\ref{lem:bonami} for centered quadratic polynomials with diagonal
terms. For example, let $J_q$ be Bernoulli with parameter $q$, let $R$ be
an independent Rademacher variable, and set
$\zeta=R\sqrt{1+J_q-q}$. Then $\zeta$ is centered, has variance one, and is
bounded by $\sqrt2$, whereas
\[
  \frac{\|\zeta^2-1\|_{L^4}}{\|\zeta^2-1\|_{L^2}}
  \sim q^{-1/4}\qquad(q\downarrow0).
\]
For the extension to general subgaussian coefficients, we instead use the
operator moment bound \eqref{eq:subgaussian-operator-moment} below.

\begin{lemma}[Maximal moment inequality]\label{lem:moricz}
Let $D_1,D_2,\ldots$ be arbitrary random variables and let
$w_1,w_2,\ldots\ge0$.  Suppose that, for some $C<\infty$ and every
$p\le q$,
\[
  \E\left|\sum_{j=p}^{q}D_j\right|^4
  \le C\left(\sum_{j=p}^{q}w_j\right)^2.
\]
Then a constant $C'$ depending only on $C$ exists such that
\[
  \E\max_{p\le r\le q}
       \left|\sum_{j=p}^{r}D_j\right|^4
  \le C'\left(\sum_{j=p}^{q}w_j\right)^2.
\]
\end{lemma}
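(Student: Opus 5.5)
This is the classical Móricz maximal inequality. I would prove it by the standard dyadic-splitting induction on the length of the index block, in the superadditive form. Set $g(p,q):=\sum_{j=p}^q w_j$; it is superadditive over adjacent blocks, $g(p,r)+g(r+1,q)\le g(p,q)$. With exponent $4$ and power $2$ on $g$, the power $\alpha=2>1$ is exactly what makes Móricz's argument close. Write $S_{p,r}:=\sum_{j=p}^r D_j$ and $M_{p,q}:=\max_{p\le r\le q}|S_{p,r}|$. We claim
\[
\E M_{p,q}^4\le C' g(p,q)^2 ,
\]
with $C'$ depending only on $C$, and we prove it by strong induction on $n=q-p+1$. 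The case $n=1$ is trivial whenever $C'\ge C$.

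For the inductive step, fix $p<q$ with $g(p,q)>0$; if $g(p,q)=0$, the hypothesis gives $S_{p,r}=0$ a.s. for all $r$. Choose a splitting index $m$ with $p\le m\le q$ such that
\[
g(p,m-1)\le \tfrac12 g(p,q)\qquad\text{and}\qquad g(m+1,q)\le\tfrac12 g(p,q),
\]
where an empty sum is zero. Such an $m$ exists: take $m$ to be the largest index with $g(p,m-1)\le\tfrac12 g(p,q)$. For $r<m$ we have $|S_{p,r}|\le M_{p,m-1}$. For $r\ge m$ we have $|S_{p,r}|\le |S_{p,m}|+M_{m+1,q}$. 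Hence
\[
M_{p,q}\le |S_{p,m}|+\max\bigl(M_{p,m-1},M_{m+1,q}\bigr),
\]
and therefore
\[
M_{p,q}^4\le |S_{p,m}|^4+\ldots
\]
To keep the constant from blowing up, I will use Minkowski in $L^4$ rather than crude convexity:
\[
\|M_{p,q}\|_4\le \|S_{p,m}\|_4+\bigl(\E M_{p,m-1}^4+\E M_{m+1,q}^4\bigr)^{1/4},
\]
since $\max(a,b)^4\le a^4+b^4$. By hypothesis, $\|S_{p,m}\|_4\le C^{1/4}g(p,q)^{1/2}$. By the induction hypothesis on the two strictly shorter blocks,
\[
\E M_{p,m-1}^4+\E M_{m+1,q}^4\le C'\bigl(g(p,m-1)^2+g(m+1,q)^2\bigr).
\]
This is the key step where the power $2$ enters. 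Set $a=g(p,m-1)$, $b=g(m+1,q)$ and $G=g(p,q)$. Then $a,b\le G/2$ and $a+b\le G$, so
\[
a^2+b^2\le \max(a,b)\,(a+b)\le G^2/2 .
\]

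Combining the pieces gives
\[
\|M_{p,q}\|_4\le \bigl(C^{1/4}+(C'/2)^{1/4}\bigr)G^{1/2}.
\]
The induction therefore closes provided
\[
C^{1/4}+2^{-1/4}C'^{1/4}\le C'^{1/4},
\]
that is, $C'^{1/4}\ge C^{1/4}/(1-2^{-1/4})$. This also dominates the base case. So we may take
\[
C'=C\,(1-2^{-1/4})^{-4}.
\]

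The only delicate point is choosing the split so that both remaining subblocks carry at most half of the weight. The midpoint index $m$ itself is absorbed by the single-sum term $S_{p,m}$, so it does not need to belong to either half. Then $a^2+b^2\le G^2/2$ gives a geometric gain that beats the additive constant from Minkowski. No independence or other structure of the $D_j$ is used, consistent with the statement allowing arbitrary random variables. Alternatively, one can cite Móricz (1976), Theorem 1, for $\gamma=4$ and $\alpha=2$.
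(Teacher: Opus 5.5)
Your proof is correct and takes essentially the paper's route. The paper simply cites M\'oricz--Serfling--Stout, Theorem~3.1, with moment order $4$, power exponent $2$, and the additive (hence superadditive) control $g(p,q)=\sqrt C\sum_{j=p}^q w_j$. You instead carry out the splitting induction behind that theorem, so the argument is self-contained and yields the explicit constant $C'=C(1-2^{-1/4})^{-4}$, which is M\'oricz's $(1-2^{(1-\alpha)/\gamma})^{-\gamma}$ at $\gamma=4$, $\alpha=2$; the choice of $m$, the bound $a^2+b^2\le G^2/2$, and the Minkowski step all check out.
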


M\'oricz, Serfling, and Stout
\cite[Theorem~3.1]{MoriczSerflingStout} prove the required maximal
power-moment inequality for arbitrary random variables.  To obtain the
displayed form, we take their moment order to be $4$, their power exponent
to be $2$, and
\[
  g(p,q)=\sqrt C\sum_{j=p}^{q}w_j.
\]
This control is additive and hence superadditive, so the cited theorem
requires neither independence nor a martingale structure in this
application.  Its power exponent $2$ is strictly greater than one, as
required.

We also record a uniform deterministic regularity estimate \cite{MishuraSchied}.

\begin{lemma}[Uniform H\"older estimate]
  \label{lem:uniform-holder}
For every $H\in(0,1)$ there is a finite constant $L_H$, independent of the
sign array, such that the function $X^H$ defined by \eqref{eq:XH} satisfies
\begin{equation}
  |X^H_t-X^H_s|\le L_H|t-s|^H,
  \qquad s,t\in[0,1].
  \label{eq:H-holder}
\end{equation}
\end{lemma}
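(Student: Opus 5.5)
We bound each dyadic level of the series \eqref{eq:XH} separately and then split the sum over levels at the level matched to the time increment $|t-s|$. Write $X^H_t=\sum_{n\ge0}f_n(t)$ with
\[
  f_n(t):=2^{n(1/2-H)}\sum_{k=0}^{2^n-1}\theta_{n,k}e_{n,k}(t).
\]
For each $n$ the tents $e_{n,k}$, $0\le k<2^n$, have disjoint interiors. Hence $f_n$ is continuous and piecewise linear, with slope of absolute value $2^{n(1/2-H)}\cdot2^{-n/2}\cdot2^n=2^{n(1-H)}$ on each dyadic interval of length $2^{-n-1}$. As already noted after \eqref{eq:XH}, $\sup|f_n|\le2^{-nH-1}$. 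These two facts hold for every sign array. They give the two per-level estimates
\[
  |f_n(t)-f_n(s)|\le 2^{n(1-H)}|t-s|,
  \qquad
  |f_n(t)-f_n(s)|\le 2\cdot2^{-nH-1}=2^{-nH}.
\]

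Fix $s\ne t$ in $[0,1]$ and set $h:=|t-s|\le1$. Choose the integer $N\ge0$ with $2^{-N-1}<h\le2^{-N}$. For levels $n\le N$ we use the Lipschitz bound, and the geometric sum is dominated by its last term:
\[
  \sum_{n=0}^{N}2^{n(1-H)}h\le\frac{2^{N(1-H)}h}{1-2^{-(1-H)}}
  \le\frac{h^{H}}{1-2^{H-1}},
\]
because $2^{N(1-H)}\le h^{-(1-H)}$. For levels $n>N$ we use the sup-norm bound:
\[
  \sum_{n>N}2^{-nH}=\frac{2^{-(N+1)H}}{1-2^{-H}}\le\frac{h^H}{1-2^{-H}},
\]
because $2^{-(N+1)}<h$. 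Uniform convergence of the series, which holds for every sign array, lets us interchange the sum with the difference $X^H_t-X^H_s$. Therefore \eqref{eq:H-holder} holds with
\[
  L_H:=\frac1{1-2^{H-1}}+\frac1{1-2^{-H}}.
\]
This constant is finite for every $H\in(0,1)$ and independent of the signs.

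There is no genuine obstacle. The only points needing care are, first, the exact slope $2^{n(1-H)}$ of $f_n$ together with the disjointness of the tent supports within a level; and second, choosing $N$ so that both geometric tails are controlled by $h^H$. Both tails converge precisely because $0<H<1$.
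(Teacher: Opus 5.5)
Your proof is correct and is essentially the paper's argument: split the levels at $N$ with $2^{-N-1}<|t-s|\le2^{-N}$, apply the per-level Lipschitz bound $2^{n(1-H)}|t-s|$ on the coarse levels and the oscillation bound $2^{-nH}$ on the fine ones, and sum both geometric series. The only differences are cosmetic: you place level $N$ on the Lipschitz side rather than the sup-norm side, and you record the explicit constant $L_H=(1-2^{H-1})^{-1}+(1-2^{-H})^{-1}$.
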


\begin{proof}
For $s=t$, both sides of \eqref{eq:H-holder} vanish.  We may therefore
assume $s\ne t$, set $\delta=|t-s|>0$, and choose the integer $N\ge0$ so that
$2^{-(N+1)}<\delta\le2^{-N}$.  The level-$m$ term of \eqref{eq:XH} has
Lipschitz constant at most $2^{m(1-H)}$ and sup norm at most
$2^{-mH-1}$.  Consequently,
\[
  |X^H_t-X^H_s|
  \le \delta\sum_{m<N}2^{m(1-H)}
      +\sum_{m\ge N}2^{-mH}.
\]
Each geometric sum is bounded by a constant depending only on $H$ times
$\delta^H$.  Hence \eqref{eq:H-holder} follows.
\end{proof}

\subsection{Quadratic variation at index \texorpdfstring{$H=1/2$}{H=1/2}}

We express every quadratic sum in the original dyadic Haar basis. This
keeps the independent Rademacher coefficients fixed as the partition
changes. A change to a partition-dependent Schauder basis would generally
destroy their independence: for independent Rademacher variables
$\theta_1,\theta_2$, the orthogonal transform
\[
  \eta_1=\frac{\theta_1+\theta_2}{\sqrt2},
  \qquad \eta_2=\frac{\theta_1-\theta_2}{\sqrt2}
\]
has uncorrelated coordinates of variance one, but
$\eta_1\eta_2=0$ almost surely. Coefficient transformations between
partition-dependent Schauder systems are studied in
\cite[Section~6.1]{ContDas}.

For a finite partition $\rho$ of $[0,t]$, with $0<t\le1$, let $I\in\rho$
denote its adjacent half-open intervals. We extend the terminal-sum
notation to this interval and set
\begin{equation}
  [X]_{(\rho)}:=\sum_{(u,v]\in\rho}(X_v-X_u)^2,
  \qquad s(\rho):=\sum_{(u,v]\in\rho}(v-u)^2,
  \qquad Z_\rho:=[X]_{(\rho)}-t+s(\rho).
  \label{eq:q-s-z-definitions}
\end{equation}
\purba[inline]{Maybe better to say $s(\rho):=\sum_{[u,v]\in\rho}|v-u|^2$  so that we completely avoid $|I|$ notation.}

\begin{lemma}[Quadratic-sum energy bounds]\label{lem:haar-energy}
For every finite partition $\rho$ of $[0,t]$, $Z_\rho$ is a homogeneous
Rademacher chaos of order two, and
\begin{equation}
  \E\bigl([X]_{(\rho)}\bigr)=t-s(\rho),\qquad
  \E|Z_\rho|^2\le2s(\rho),\qquad
  \E|Z_\rho|^4\le324s(\rho)^2.
  \label{eq:mean-q}
\end{equation}
If $\rho$ and $\sigma$ are partitions of the same interval $[0,t]$, and
$\tau=\rho\cup\sigma$ is their common refinement, then
\begin{align}
  \E|Z_\rho-Z_\sigma|^2
    &\le2\bigl(s(\rho)+s(\sigma)-2s(\tau)\bigr),
    \label{eq:L2-block-gen}\\
  \E|Z_\rho-Z_\sigma|^4
    &\le324\bigl(s(\rho)+s(\sigma)-2s(\tau)\bigr)^2.
    \label{eq:L4-block-gen}
\end{align}
In particular, if $\sigma$ refines $\rho$, then
\begin{align}
  \E|Z_\rho-Z_\sigma|^2&\le2\bigl(s(\rho)-s(\sigma)\bigr),
    \label{eq:L2-block}\\
  \E|Z_\rho-Z_\sigma|^4&\le324\bigl(s(\rho)-s(\sigma)\bigr)^2.
    \label{eq:L4-block}
\end{align}
\end{lemma}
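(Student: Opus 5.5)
We express every increment in the dyadic Haar basis. Since $e_{n,k}'=h_{n,k}$ (the $L^2$-normalized Haar function), we have $X_v-X_u=\sum_{n,k}\theta_{n,k}\langle h_{n,k},\one_{(u,v]}\rangle$. For $\rho$ a partition of $[0,t]$, let $P_\rho$ be the $L^2(0,1)$ orthogonal projection onto the span $V_\rho$ of $\{\one_I:I\in\rho\}$, and let $Q$ be the projection onto the span of the Haar functions $\{h_{n,k}\}$ restricted to $[0,t]$. Write $\theta=(\theta_{n,k})$ as a coordinate vector. Then
\[
  [X]_{(\rho)}=\sum_{I\in\rho}\frac{\langle \theta\cdot h,\one_I\rangle^2}{|I|}\,|I|
  =\sum_{i,j}\theta_i\theta_j\,a^\rho_{ij},
  \qquad a^\rho_{ij}:=\sum_{I\in\rho}\langle h_i,\one_I\rangle\langle h_j,\one_I\rangle .
\]
Thus $A_\rho=(a^\rho_{ij})$ is the matrix of the operator $h\mapsto \sum_I |I|\,\langle h,\one_I/|I|\rangle\,\one_I/|I|$, that is, of $P_\rho$ weighted by interval lengths, compressed to the Haar basis. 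The diagonal part is $\sum_i a^\rho_{ii}=\sum_I\sum_i\langle h_i,\one_I\rangle^2$. Since the Haar system together with the constant $1$ is an orthonormal basis, Parseval gives $\sum_i\langle h_i,\one_I\rangle^2=|I|-|I|^2$. Summing over $I\in\rho$ yields $t-s(\rho)$. Because $\theta_i^2=1$, this shows $[X]_{(\rho)}=t-s(\rho)+2\sum_{i<j}a^\rho_{ij}\theta_i\theta_j$. Hence $Z_\rho$ is a homogeneous chaos of order two, and $\E[X]_{(\rho)}=t-s(\rho)$. The off-diagonal sum converges in $L^2$ because $A_\rho$ is Hilbert--Schmidt, as we show next.

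For the second moment, $\E Z_\rho^2=4\sum_{i<j}a_{ij}^2\le 2\|A_\rho\|_{\HS}^2$. The key identity is $\|A_\rho\|_{\HS}^2=\sum_{I,J\in\rho}\bigl(\sum_i\langle h_i,\one_I\rangle\langle h_i,\one_J\rangle\bigr)^2$. By Parseval including the constant, the inner sum equals $\langle\one_I,\one_J\rangle-|I||J|=\delta_{IJ}|I|-|I||J|$. Therefore
\[
  \|A_\rho\|_{\HS}^2=\sum_I|I|^2-2\sum_I|I|^3+t^2\,s(\rho)\cdot{}
\]
More precisely, $\sum_{I,J}(\delta_{IJ}|I|-|I||J|)^2=s(\rho)-2\sum_I|I|^3+s(\rho)^2\le s(\rho)$, using $\sum|I|^3\ge s(\rho)^2/t\ge s(\rho)^2$ by Cauchy--Schwarz with $t\le1$. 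This gives $\E Z_\rho^2\le 2s(\rho)$. Lemma~\ref{lem:bonami} then gives $\E Z_\rho^4\le 81(\E Z_\rho^2)^2\le 324 s(\rho)^2$.

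For the difference bounds, $Z_\rho-Z_\sigma$ is again an order-two chaos with coefficient matrix $A_\rho-A_\sigma$, so it suffices to bound $\|A_\rho-A_\sigma\|_{\HS}^2=\|A_\rho\|^2_{\HS}+\|A_\sigma\|^2_{\HS}-2\tr(A_\rho A_\sigma)$. The same Parseval computation gives
\[
  \tr(A_\rho A_\sigma)=\sum_{I\in\rho,J\in\sigma}\bigl(|I\cap J|-|I||J|\bigr)^2 .
\]
Here $\sum_{I,J}|I\cap J|^2=s(\tau)$, since the sets $I\cap J$ are exactly the intervals of $\tau$. The main obstacle is controlling the correction terms $-2\sum|I\cap J||I||J|+\sum|I|^2|J|^2$ and the cubic terms from the two HS norms, so that the total stays at most $s(\rho)+s(\sigma)-2s(\tau)$. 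I would first try the cleaner route of noting that the rank-one correction comes from dropping the constant function. Concretely, $A_\rho=M_\rho-u_\rho u_\rho^{\top}$ in the full basis including the constant, where $M_\rho$ is the length-weighted projection and $u_\rho$ is the vector $(|I|)$. The exact expression for $\|M_\rho-M_\sigma\|_{\HS}^2$ is $s(\rho)+s(\sigma)-2s(\tau)$. The removal of the constant direction is a compression by the projection $\Pi$ onto the orthogonal complement of $1$, and compression does not increase the Hilbert--Schmidt norm. Hence $\|A_\rho-A_\sigma\|_{\HS}^2\le s(\rho)+s(\sigma)-2s(\tau)$, and \eqref{eq:L2-block-gen} follows. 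The bound \eqref{eq:L4-block-gen} then follows again from Lemma~\ref{lem:bonami}. The same compression argument also recovers the single-partition bound cleanly. In the refining case, $\tau=\sigma$, and the formulas reduce to \eqref{eq:L2-block}--\eqref{eq:L4-block}.
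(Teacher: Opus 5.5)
Your proposal is correct and, in its final form, is the paper's proof. The matrix $(a^\rho_{ij})$ is the Haar matrix of $A_\rho=\Pi K_\rho\Pi$ with $K_\rho=\sum_{I\in\rho}\one_I\otimes\one_I$. The identity $\tr(K_\rho K_\sigma)=\sum_{I,J}|I\cap J|^2=s(\tau)$ gives $\|K_\rho-K_\sigma\|_{\HS}^2=s(\rho)+s(\sigma)-2s(\tau)$, compression by $\Pi$ does not increase the Hilbert--Schmidt norm, and Lemma~\ref{lem:bonami} gives the fourth moments. Your exact formula $\|A_\rho\|_{\HS}^2=s(\rho)-2\sum_I|I|^3+s(\rho)^2$ is a valid alternative to the compression argument for the single-partition bound.

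Two misstatements should be fixed, though neither affects the argument once your $M_\rho$ is read as $K_\rho$. First, the displayed operator $h\mapsto\sum_I\langle h,\one_I\rangle\one_I/|I|$ is the unweighted projection $P_\rho$; the length weighting requires $\one_I/\sqrt{|I|}$. Second, on $L^2[0,1]$ the correct relation is $A_\rho=\Pi K_\rho\Pi$, not $K_\rho-u_\rho u_\rho^{\top}$. The matrix $\operatorname{diag}(|I|)-u_\rho u_\rho^{\top}$ is the Gram matrix of $\{\Pi\one_I\}_{I\in\rho}$, indexed by the intervals of $\rho$, so it cannot be used to compare different partitions.
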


\begin{proof}
Let $h_{n,k}=e'_{n,k}$ almost everywhere. These Haar functions form an
orthonormal basis of
\[
  H_0:=\left\{f\in L^2[0,1]:\int_0^1f(u)\,du=0\right\};
\]
adjoining the constant function $\one_{[0,1]}$ gives an orthonormal basis
of $L^2[0,1]$. Let $P$ be the orthogonal projection onto $H_0$, so that
$Pf=f-(\int_0^1f)\one_{[0,1]}$. For $I=(u,v]$,
\begin{equation}
  X_v-X_u=\sum_{n,k}\theta_{n,k}\langle\one_I,h_{n,k}\rangle
  \quad\text{in }L^2(\Omega).
  \label{eq:increment-haar}
\end{equation}
All inner products below are in $L^2[0,1]$.

Define the positive finite-rank operators
\[
  K_\rho:=\sum_{I\in\rho}\one_I\otimes\one_I,
  \qquad A_\rho:=PK_\rho P
     =\sum_{I\in\rho}P\one_I\otimes P\one_I,
\]
where $(f\otimes g)v=\langle v,g\rangle f$. Their trace and
Hilbert--Schmidt norm satisfy
\begin{align}
  \tr(A_\rho)&=\sum_{I\in\rho}(|I|-|I|^2)=t-s(\rho),
     \label{eq:trace}\\
  \|K_\rho\|_{\HS}^2&=\sum_{I\in\rho}|I|^2=s(\rho).
     \label{eq:Knorm}
\end{align}
Here $|I|=v-u$ for $I=(u,v]$. The second identity follows because the
nonzero eigenvalues of $K_\rho$ are the interval lengths $|I|$.

To justify the quadratic expansion, set
\[
  \mathcal I:=\{(n,k):n\ge0,\ 0\le k<2^n\},\qquad
  \mathcal I_N:=\{(n,k)\in\mathcal I:n<N\},
\]
and order $\mathcal I$ lexicographically. We write $\theta_\alpha$, $e_\alpha$, and
$h_\alpha$ for the corresponding signs, tents, and Haar functions, and $P_N$ for
the orthogonal projection onto their span with $\alpha\in\mathcal I_N$.
The truncated series
\[
  X^{[N]}:=\sum_{\alpha\in\mathcal I_N}\theta_\alpha e_\alpha
\]
is the linear interpolation of $X$ on $\mathbb T_N$ and converges
uniformly to $X$. With
$(A_\rho)_{\alpha\beta}:=\langle h_\alpha,A_\rho h_\beta\rangle$, its
quadratic sum is
\[
  [X^{[N]}]_{(\rho)}
  =\sum_{\alpha,\beta\in\mathcal I_N}
       (A_\rho)_{\alpha\beta}\theta_\alpha\theta_\beta.
\]
Since $\theta_\alpha^2=1$, the diagonal terms tend to $\tr(A_\rho)$.
The products $\theta_\alpha\theta_\beta$, $\alpha<\beta$, are orthonormal
in $L^2(\Omega)$, and $A_\rho$ is Hilbert--Schmidt. Thus the off-diagonal
terms converge in $L^2$, giving
\begin{equation}
  [X]_{(\rho)}=\tr(A_\rho)
    +2\sum_{\alpha<\beta}(A_\rho)_{\alpha\beta}
           \theta_\alpha\theta_\beta.
  \label{eq:quadratic-form}
\end{equation}
In particular,
\begin{equation}
  Z_\rho=2\sum_{\alpha<\beta}(A_\rho)_{\alpha\beta}
                 \theta_\alpha\theta_\beta
  \quad\text{in }L^2(\Omega).
  \label{eq:centered-chaos}
\end{equation}

For another partition $\sigma$ of $[0,t]$, the nonempty intersections of
intervals of $\rho$ and $\sigma$ are the intervals of $\tau$. Hence
\begin{equation}
  \tr(K_\rho K_\sigma)
    =\sum_{I\in\rho,J\in\sigma}|I\cap J|^2=s(\tau),
  \label{eq:cross-trace}
\end{equation}
and therefore
\begin{equation}
  \|K_\rho-K_\sigma\|_{\HS}^2
    =s(\rho)+s(\sigma)-2s(\tau).
  \label{eq:exact-energy-decrement}
\end{equation}
Compression by $P$ cannot increase the Hilbert--Schmidt norm. Applying
orthogonality to \eqref{eq:centered-chaos}, we obtain
\begin{align}
  \E|Z_\rho-Z_\sigma|^2
  &=4\sum_{\alpha<\beta}
       |(A_\rho-A_\sigma)_{\alpha\beta}|^2\notag\\
  &\le2\|A_\rho-A_\sigma\|_{\HS}^2
   \le2\bigl(s(\rho)+s(\sigma)-2s(\tau)\bigr).
  \label{eq: Z-increment}
\end{align}
The same calculation with $A_\rho$ alone gives
$\E|Z_\rho|^2\le2s(\rho)$. Lemma~\ref{lem:bonami} gives all the
fourth-moment bounds, since $3^4\cdot2^2=324$. If $\sigma$ refines $\rho$,
then $\tau=\sigma$, yielding the final assertions.
\end{proof}

\begin{remark}[Subgaussian coefficients]
The mean and moment bounds above extend, with constants depending only on $K$, to
\begin{equation}
  \widetilde X_t:=\sum_{n,k}\zeta_{n,k}e_{n,k}(t),
  \qquad 0\le t\le1,
  \label{eq:Y}
\end{equation}
where the coefficients are independent, centered, have variance one, and
satisfy $\sup_{n,k}\|\zeta_{n,k}\|_{\psi_2}\le K<\infty$. Here
$\|\zeta\|_{\psi_2}:=\inf\{a>0:\E\exp(\zeta^2/a^2)\le2\}$.
The subgaussian tail bound and a union bound give
$\max_{k<2^n}|\zeta_{n,k}|=O(\sqrt{n+1})$ almost surely, so the series
converges uniformly and defines a continuous bridge.

For a finite coefficient vector $\zeta$ and a symmetric matrix $B$,
integration of the Hanson--Wright inequality~\cite[Theorem~1.1]{RudelsonVershynin2013} gives
\begin{equation}
  \left\|\zeta^{\mathsf T}B\zeta-\tr B\right\|_{L^4}
  \le C_K\|B\|_{\HS}.
  \label{eq:subgaussian-operator-moment}
\end{equation}
Indeed, the tail is bounded by
$2\exp[-c_K\min(u^2/\|B\|_{\HS}^2,u/\|B\|_{\mathrm{op}})]$, and
$\|B\|_{\mathrm{op}}\le\|B\|_{\HS}$. Applying
\eqref{eq:subgaussian-operator-moment} to differences of finite Haar
compressions permits passage to the limit in $L^4$. Thus
\begin{equation}
  \widetilde Z_\rho
  :=[\widetilde X]_{(\rho)}-t+s(\rho)
  =2\sum_{\alpha<\beta}(A_\rho)_{\alpha\beta}\zeta_\alpha\zeta_\beta
     +\sum_\alpha(A_\rho)_{\alpha\alpha}(\zeta_\alpha^2-1).
  \label{eq:centered-chaos-gen}
\end{equation}
The diagonal term is generally random. In particular, this is not a
Rademacher chaos, and the constants in Lemma~\ref{lem:haar-energy} need
not remain $2$ and $324$. Nevertheless,
\[
  \|\widetilde Z_\rho-\widetilde Z_\sigma\|_{L^4}
  \le C_K\|A_\rho-A_\sigma\|_{\HS}
  \le C_K\bigl(s(\rho)+s(\sigma)-2s(\rho\cup\sigma)\bigr)^{1/2}.
\]
The same bound with a single partition proves
$\|\widetilde Z_\rho\|_{L^4}\le C_Ks(\rho)^{1/2}$. Consequently the proof
of Theorem~\ref{thm:main refining} below applies to $\widetilde X$ as well.
This includes standard normal coefficients and the bounded variance-one
laws $\operatorname{Uniform}(-\sqrt3,\sqrt3)$,
$\sqrt{20}(\operatorname{Beta}(2,2)-1/2)$, and
$\sqrt8(\operatorname{Beta}(1/2,1/2)-1/2)$.
Related Schauder constructions with non-Gaussian coefficients appear in
\cite{BayraktarDasKim2025}.
\end{remark}

\begin{proof}[Proof of Theorem~\ref{thm:main refining}]
Fix $t\in(0,1]$ and restrict the partitions to $[0,t]$ by setting
\[
  \rho_m(t):=(\pi_m\cap[0,t])\cup\{t\},\qquad
  s_m:=s(\rho_m(t)),\qquad Z_m:=Z_{\rho_m(t)}.
\]
Then $[X]_{(\rho_m(t))}=[X]_{(\pi_m)}(t)$, and $(\rho_m(t))$ is refining.
Consequently, $s_m$ is nonincreasing and
\begin{equation}
  0\le s_m\le t\,\mesh(\pi_m)\longrightarrow0.
  \label{eq:s-to-zero}
\end{equation}
Set $D_j=Z_j-Z_{j+1}$ and $w_j=s_j-s_{j+1}$. By
\eqref{eq:L4-block}, for $a\le b$,
\[
  \E\left|\sum_{j=a}^{b}D_j\right|^4
  \le324\left(\sum_{j=a}^{b}w_j\right)^2.
\]
Lemma~\ref{lem:moricz} and monotone convergence therefore give
\[
  \E\sup_{m\ge a}|Z_m-Z_a|^4\le C s_a^2.
\]
Together with \eqref{eq:mean-q} and $(u+v)^4\le8(u^4+v^4)$ for
$u,v\ge0$, this yields
\begin{equation}
  \E\sup_{m\ge a}|Z_m|^4\le C' s_a^2\longrightarrow0.
  \label{eq:max-tail}
\end{equation}
Fatou's lemma now implies $\limsup_m|Z_m|=0$ almost surely. Since
$[X]_{(\pi_m)}(t)=Z_m+t-s_m$, we obtain
\begin{equation}
  [X]_{(\pi_m)}(t)\longrightarrow t\qquad\text{almost surely}.
  \label{eq:fixed-t}
\end{equation}
At $t=0$, this holds by definition.

Intersect the probability-one events in \eqref{eq:fixed-t} over rational
$t\in[0,1]$. On this event, \eqref{eq:successor-comparison} implies that
the nondecreasing functions $V_{\pi_m}(X;t)$ converge to
$t$ at every rational time. For each integer $k\ge1$, monotonicity between
the points $j/k$, $0\le j\le k$, gives
\[
    \sup_{t\in[0,1]}
  \left|V_{\pi_m}(X;t)-t\right|
  \le\max_{0\le j\le k}
     \left|V_{\pi_m}(X;j/k)-j/k\right|+\frac1k.
\]
First let $m\to\infty$ and then $k\to\infty$. The successor sums converge
uniformly to $t$, and \eqref{eq:successor-comparison} transfers this
convergence to the stopped sums. This proves the theorem.
\end{proof}

\begin{proof}[Proof of Theorem~\ref{thm:sharp-logarithmic-mesh}]
(a) We first derive a concentration bound for a finite partition $\rho$
of $[0,t]$, where $0<t\le1$. The normalized indicators
$|I|^{-1/2}\one_I$, $I\in\rho$, are eigenvectors of $K_\rho$ with
eigenvalues $|I|$. Together with orthogonal compression, this gives
\begin{equation}
  \lVert A_\rho\rVert_{\mathrm{op}}
  \le\lVert K_\rho\rVert_{\mathrm{op}}=\mesh(\rho),
  \qquad
  \lVert A_\rho\rVert_{\HS}^2
  \le s(\rho)\le t\mesh(\rho).
  \label{eq:A-op-HS-log}
\end{equation}
Let $A_{\rho,N}=P_NA_\rho P_N$ and retain the truncation $X^{[N]}$
from the proof of Lemma~\ref{lem:haar-energy}. Its quadratic sum is
the quadratic form associated with $A_{\rho,N}$ and the independent
Rademacher coefficients $(\theta_\alpha)_{\alpha\in\mathcal I_N}$.
The Hanson--Wright inequality
\cite[Theorem~1.1]{RudelsonVershynin2013} therefore yields a universal
constant $c>0$ such that, for every $u>0$,
\begin{equation}
  \Pp\left(
    \left|[X^{[N]}]_{(\rho)}-\tr(A_{\rho,N})\right|>u
  \right)
  \le2\exp\left(
    -c\min\left\{
      \frac{u^2}{s(\rho)},\frac{u}{\mesh(\rho)}
    \right\}\right).
  \label{eq:finite-HW-log}
\end{equation}
For each fixed $\rho$, uniform convergence of $X^{[N]}$ gives
$[X^{[N]}]_{(\rho)}\to[X]_{(\rho)}$ for every sign array, and
trace-norm convergence gives
$\tr(A_{\rho,N})\to\tr(A_\rho)=t-s(\rho)$. Fatou's lemma applied
to the indicators of the strict exceedance events in
\eqref{eq:finite-HW-log} now yields
\begin{equation}
  \Pp(|Z_\rho|>u)
  \le2\exp\left(
    -c\min\left\{
      \frac{u^2}{s(\rho)},\frac{u}{\mesh(\rho)}
    \right\}\right)
  \le2\exp\left(-\frac{c\min\{u^2,u\}}{\mesh(\rho)}\right).
  \label{eq:infinite-HW-log}
\end{equation}

Fix $t\in(0,1]$ and put
$\rho_n(t)=(\pi_n\cap[0,t])\cup\{t\}$. Since
$\mesh(\rho_n(t))\le\mesh(\pi_n)=o(1/\log n)$, the last bound is
summable in $n$ for every fixed $u>0$. Borel--Cantelli, applied with
$u=1/j$, gives $Z_{\rho_n(t)}\to0$ almost surely. Moreover,
$s(\rho_n(t))\le t\mesh(\pi_n)\to0$, and hence
\[
  [X]_{(\pi_n)}(t)
  =Z_{\rho_n(t)}+t-s(\rho_n(t))\longrightarrow t
  \qquad\text{almost surely}.
\]
We intersect these events over rational $t\in[0,1]$. The monotone
successor sums $V_{\pi_m}(X;\cdot)$ and \eqref{eq:successor-comparison} then give uniform
convergence on this event, by the argument at the end of the proof of
Theorem~\ref{thm:main refining}.

\medskip
(b) For $\ell\ge1$, let $\mathcal P_\ell$ contain every partition
obtained from $\mathbb T_{\ell-1}$ by choosing separately in each of
its $2^{\ell-1}$ intervals whether to insert the midpoint. Thus
\begin{equation}
  |\mathcal P_\ell|=2^{\,2^{\ell-1}},
  \qquad
  \mesh(\pi)\le2^{1-\ell}
  \quad(\pi\in\mathcal P_\ell).
  \label{eq:block-cardinality-mesh}
\end{equation}
List the blocks $\mathcal P_1,\mathcal P_2,\ldots$ in order, and
choose a fixed ordering within each block, beginning with
$\mathbb T_\ell$ and ending with $\mathbb T_{\ell-1}$. This defines
a deterministic nonrefining sequence $(\widehat\pi_n)$. If
$\widehat\pi_n$ occurs in block $\ell$, then
\[
  n\le\sum_{q=1}^{\ell}2^{\,2^{q-1}}
  <2^{\,1+2^{\ell-1}}.
\]
Consequently, for $n\ge2$,
\[
  \mesh(\widehat\pi_n)\log n
  <(1+2^{1-\ell})\log2\le2\log2,
\]
which proves the required mesh bound.

We determine the extrema within each block. Set $\lambda=2^{-1/2}$
and define the normalized dyadic increments
\[
  W_{r,k}:=2^{r/2}
    \bigl(X_{(k+1)2^{-r}}-X_{k2^{-r}}\bigr),
  \qquad 0\le k<2^r.
\]
The levels below $r$ are affine on the $k$th level-$r$ interval,
the level-$r$ tent has midpoint value $2^{-r/2-1}\theta_{r,k}$,
and all finer levels vanish at the endpoints and midpoint. Therefore
\begin{equation}
  W_{r+1,2k}=\lambda(W_{r,k}+\theta_{r,k}),
  \qquad
  W_{r+1,2k+1}=\lambda(W_{r,k}-\theta_{r,k}).
  \label{eq:dyadic-increment-recursion-log}
\end{equation}
Since $W_{0,0}=0$, induction shows that, for every function $f$,
\begin{equation}
  2^{-r}\sum_{k=0}^{2^r-1}f(W_{r,k})=\E f(W_r),
  \qquad
  W_r:=\sum_{j=1}^{r}\lambda^jY_j,
  \label{eq:exact-cube-average-log}
\end{equation}
where $(Y_j)$ is an auxiliary independent Rademacher sequence.
Indeed, each pair in \eqref{eq:dyadic-increment-recursion-log}
contains both signs, regardless of $\theta_{r,k}$. In particular,
\begin{equation}
  [X]_{(\mathbb T_\ell)}
  =\E W_\ell^2=\sum_{j=1}^{\ell}2^{-j}=1-2^{-\ell}.
  \label{eq:exact-dyadic-qv-log}
\end{equation}

Fix $\ell$ and set $r=\ell-1$ and $h=2^{-\ell}$. In the $k$th
level-$r$ interval, the contributions obtained by inserting and by
omitting the midpoint are, respectively,
\[
  h(W_{r,k}^2+1)
  \qquad\text{and}\qquad
  2hW_{r,k}^2.
\]
Thus omitting the midpoint changes the quadratic sum by
$h(W_{r,k}^2-1)$. Every combination of these choices occurs in
$\mathcal P_\ell$, so \eqref{eq:exact-cube-average-log} gives
\begin{align}
  M_\ell
  &:=\max_{\pi\in\mathcal P_\ell}[X]_{(\pi)}
   =1-2^{-\ell}
     +\frac12\E[(W_{\ell-1}^2-1)_+],
  \label{eq:block-maximum-log}\\
  m_\ell
  &:=\min_{\pi\in\mathcal P_\ell}[X]_{(\pi)}
   =1-2^{-\ell}
     -\frac12\E[(1-W_{\ell-1}^2)_+].
  \label{eq:block-minimum-log}
\end{align}
Both extrema are deterministic, although the partitions attaining
them may depend on the signs.

The variables $W_r$ converge uniformly in the auxiliary signs to
\[
  W:=\sum_{j=1}^{\infty}2^{-j/2}Y_j=\sqrt2\,U+V,
  \qquad
  U:=\sum_{j=1}^{\infty}2^{-j}Y_{2j-1},
  \quad
  V:=\sum_{j=1}^{\infty}2^{-j}Y_{2j}.
\]
The variables $U$ and $V$ are independent and uniform on $[-1,1]$.
Writing $b=\sqrt2$, the convolution density of $W$ is
$f_W(x)=(1+b-|x|)/(4b)$ for $1\le|x|\le1+b$. Hence
\begin{equation}
  \E[(W^2-1)_+]
  =\frac1{2b}\int_1^{1+b}(x^2-1)(1+b-x)\,dx
  =\frac{4+\sqrt2}{12}=2\gamma.
  \label{eq:positive-part-constant-log}
\end{equation}
Since $\E W^2=1$, we also have $\E[(1-W^2)_+]=2\gamma$.
Bounded convergence in
\eqref{eq:block-maximum-log}--\eqref{eq:block-minimum-log} yields
$M_\ell\to1+\gamma$ and $m_\ell\to1-\gamma$. Every quadratic sum
in block $\ell$ lies between these extrema, and both extrema are
attained. It follows that
\[
  \liminf_{n\to\infty}[X]_{(\widehat\pi_n)}=1-\gamma,
  \qquad
  \limsup_{n\to\infty}[X]_{(\widehat\pi_n)}=1+\gamma.
\]
The construction and all identities in part~(b) hold for every sign
array.
\end{proof}

\begin{proof}[Proof of Theorem~\ref{thm:biased-signs}]
The identity \eqref{eq:exact-dyadic-qv-log} holds for every sign array,
so it also proves \eqref{eq:biased-dyadic-exact}.

To separate the effect of the bias, define the periodic tent function and
the all-positive path by
\[
  \varphi(t):=\operatorname{dist}(t,\mathbb Z),
  \qquad
  F(t):=\sum_{m=0}^{\infty}2^{-m/2}\varphi(2^mt).
\]
Set $\zeta_{m,k}^{(\nu)}:=\theta_{m,k}^{(\nu)}-b$ and
$\sigma^2:=\E(\zeta_{m,k}^{(\nu)})^2=1-b^2$.  Then
\begin{equation}
  Y^{(\nu)}=bF+\widetilde Y,
  \qquad
  \widetilde Y_t
  :=\sum_{m=0}^{\infty}\sum_{k=0}^{2^m-1}\zeta_{m,k}^{(\nu)}e_{m,k}(t),
  \label{eq:biased-decomposition}
\end{equation}
where the coefficients of $\widetilde Y$ are independent, centered,
and bounded. We abbreviate these coefficients to $\zeta_\alpha^{(\nu)}$
when using the single Haar index $\alpha$.

For integers $q\ge1$, let
$\tau_q:=\{j/q:0\le j\le q\}$ and write
$\Delta_j f:=f((j+1)/q)-f(j/q)$ in calculations on this grid.
The operators from Lemma~\ref{lem:haar-energy} satisfy
\[
  \tr(A_{\tau_q})=1-q^{-1},
  \qquad
  \lVert A_{\tau_q}\rVert_{\HS}^2
  \le\lVert K_{\tau_q}\rVert_{\HS}^2=q^{-1}.
\]
Expanding the quadratic form in the Haar basis gives
\begin{equation}
  \E\bigl([\widetilde Y]_{(\tau_q)}\bigr)
  =\sigma^2(1-q^{-1}),
  \qquad
  \operatorname{Var}\bigl([\widetilde Y]_{(\tau_q)}\bigr)
  \le C_\nu q^{-1}.
  \label{eq:biased-centered-qv-bound}
\end{equation}
Indeed, the variance is
\[
  \operatorname{Var}((\zeta_{0,0}^{(\nu)})^2)
       \sum_\alpha(A_{\tau_q})_{\alpha\alpha}^2
  +2\sigma^4\sum_{\alpha\ne\beta}
       (A_{\tau_q})_{\alpha\beta}^2.
\]
These identities follow first for finite truncations; boundedness of the
coefficients and convergence of the compressed operators in trace and
Hilbert--Schmidt norms justify passage to the limit.

For the cross term, set
\[
  C_q:=\sum_{j=0}^{q-1}\Delta_j F\,\Delta_j\widetilde Y,
  \qquad
  g_q:=\sum_{j=0}^{q-1}(\Delta_j F)\one_{(j/q,(j+1)/q]}.
\]
Since $F(0)=F(1)=0$, we have $Pg_q=g_q$.  Parseval's identity and
Lemma~\ref{lem:uniform-holder} yield
\begin{equation}
  \E C_q=0,
  \qquad
  \operatorname{Var}(C_q)
  =\sigma^2\lVert g_q\rVert_2^2
  =\frac{\sigma^2}{q}[F]_{(\tau_q)}
  \le C_\nu' q^{-1}.
  \label{eq:biased-cross-bound}
\end{equation}
The bounds in \eqref{eq:biased-centered-qv-bound} and
\eqref{eq:biased-cross-bound} are summable along both $q=3\cdot2^n$
and $q=3^n$.  Chebyshev's inequality and the Borel--Cantelli lemma
therefore imply, on one event of probability one,
\begin{equation}
  [Y^{(\nu)}]_{(\tau_q)}
  =\sigma^2+b^2[F]_{(\tau_q)}+o(1)
  \label{eq:biased-reduction}
\end{equation}
along both sequences.  It remains to compute the deterministic sums.

For the thirds grids, put
\[
  a:=F(1/3)=F(2/3)
    =\frac13\sum_{m=0}^{\infty}2^{-m/2}
    =\frac{2+\sqrt2}{3}.
\]
On each level-$n$ dyadic cell, the levels below $n$ are affine and
the remaining levels form a copy of $2^{-n/2}F$.  If
$d_{n,k}:=F((k+1)2^{-n})-F(k2^{-n})$, the three increments within
that cell are consequently
\[
  \frac{d_{n,k}}3+2^{-n/2}a,
  \qquad \frac{d_{n,k}}3,
  \qquad \frac{d_{n,k}}3-2^{-n/2}a.
\]
Squaring and summing gives the exact identity
\begin{equation}
  [F]_{(\mathbb U_n)}
  =\frac13[F]_{(\mathbb T_n)}+2a^2
  =\frac13(1-2^{-n})+\frac{2(2+\sqrt2)^2}{9}.
  \label{eq:biased-thirds-deterministic}
\end{equation}
Its limit is $(15+8\sqrt2)/9$.  Since $\sigma^2=1-b^2$,
\eqref{eq:biased-reduction} proves \eqref{eq:biased-thirds-limit}.

To treat the triadic grids, we establish an asymptotic formula for every
odd $q$.  Write
\[
  L:=\lfloor\log_2q\rfloor,
  \qquad c_q:=q2^{-L}\in[1,2),
\]
and define
\begin{align}
  \mathcal R(u)
  &:=\int_0^1\bigl(\varphi(x+u)-\varphi(x)\bigr)^2\,dx\notag\\
  &=d(u)^2-\frac43d(u)^3,
  \qquad d(u):=\operatorname{dist}(u,\mathbb Z),
  \label{eq:biased-tent-energy}\\
  G(c)
  &:=c\sum_{\ell\in\mathbb Z}2^{-\ell}
        \mathcal R(2^\ell/c),
  \qquad 1\le c\le2.
  \label{eq:biased-profile}
\end{align}
The formula for $\mathcal R$ follows by integration on the linear pieces
of $\varphi$.  The summands defining $G$ are bounded uniformly in $c$
by constant multiples of $2^{-\ell}$ for $\ell\ge0$ and $2^\ell$
for $\ell<0$.  Thus the series converges uniformly and $G$ is continuous.
We claim that
\begin{equation}
  [F]_{(\tau_q)}-G(c_q)\longrightarrow0
  \qquad\text{as odd }q\longrightarrow\infty.
  \label{eq:biased-odd-grid-asymptotic}
\end{equation}

The first step is to compare the discrete sum with its spatial average.
Using the periodic extension of $F$, set
\[
  D_q(t):=\sqrt q\bigl(F(t+q^{-1})-F(t)\bigr).
\]
The contribution of level $m$ is bounded by
\[
  \min\left\{\frac{2^{m/2}}{\sqrt q},
                   \frac{\sqrt q}{2}\,2^{-m/2}\right\}.
\]
Consequently, $\lVert D_q\rVert_\infty$ is bounded uniformly in $q$.
For a fixed integer $R\ge0$ and $L\ge R$, let $D_q^{[R]}$ retain
only the levels $L-R,\ldots,L+R$.  Summing the omitted geometric tails
gives
$\lVert D_q-D_q^{[R]}\rVert_\infty=O(2^{-R/2})$, uniformly in $q$.
Define
\[
  H_{R,c}(x)
  :=\sum_{\ell=-R}^{R}\sqrt c\,2^{-\ell/2}
       \left[
         \varphi(2^{\ell+R}x+2^\ell/c)
         -\varphi(2^{\ell+R}x)
       \right].
\]
Then $D_q^{[R]}(t)=H_{R,c_q}(2^{L-R}t)$.
Because $q$ is odd, multiplication by $2^{L-R}$ permutes the residues
modulo $q$.  Periodicity therefore gives
\begin{align*}
  \frac1q\sum_{j=0}^{q-1}\bigl|D_q^{[R]}(j/q)\bigr|^2
  &=\frac1q\sum_{j=0}^{q-1}\bigl|H_{R,c_q}(j/q)\bigr|^2,\\
  \int_0^1\bigl|D_q^{[R]}(t)\bigr|^2\,dt
  &=\int_0^1\bigl|H_{R,c_q}(x)\bigr|^2\,dx.
\end{align*}
For fixed $R$, the functions $H_{R,c}^2$, $1\le c\le2$, have a common
Lipschitz bound, so the difference between the last Riemann sum and
integral is $O_R(q^{-1})$.  Restoring the omitted levels gives an error
of $O(2^{-R/2})+O_R(q^{-1})$.  Letting $q\to\infty$ through odd
integers and then $R\to\infty$ proves
\begin{equation}
  [F]_{(\tau_q)}
  -q\int_0^1\bigl(F(t+q^{-1})-F(t)\bigr)^2\,dt
  \longrightarrow0.
  \label{eq:biased-discrete-continuous}
\end{equation}

The level increments in this integral are orthogonal.  Indeed,
$\varphi(x+1/2)=1/2-\varphi(x)$, so translation by $2^{-m-1}$
changes the sign of the level-$m$ increment and leaves every finer-level
increment unchanged.  Their pairwise integrals therefore vanish, and
\begin{align*}
  q\int_0^1\bigl(F(t+q^{-1})-F(t)\bigr)^2\,dt
  &=q\sum_{m=0}^{\infty}2^{-m}\mathcal R(2^m/q)\\
  &=c_q\sum_{\ell=-L}^{\infty}2^{-\ell}
             \mathcal R(2^\ell/c_q).
\end{align*}
The omitted terms with $\ell<-L$ are $O(2^{-L})$, uniformly in $c_q$.
Together with \eqref{eq:biased-discrete-continuous}, this proves
\eqref{eq:biased-odd-grid-asymptotic}.

The negative-index terms in \eqref{eq:biased-profile} sum to
$c^{-1}-4/(9c^2)$.  For $c=1$, all remaining terms vanish; for
$c=3/2$, they have $d(2^\ell/c)=1/3$ and
$\mathcal R(2^\ell/c)=5/81$.  Hence
\begin{equation}
  G(1)=\frac59,
  \qquad
  G(3/2)=\frac23-\frac{16}{81}+\frac{15}{81}
        =\frac{53}{81}.
  \label{eq:biased-profile-values}
\end{equation}
Finally,
\[
  c_{3^n}=2^{\{n\log_2 3\}}.
\]
Since $\log_2 3$ is irrational, the fractional parts
$\{n\log_2 3\}$ are dense in $[0,1]$.  We may therefore choose
deterministic subsequences $(n_j)$ and $(m_j)$ such that
$c_{3^{n_j}}\to1$ and $c_{3^{m_j}}\to3/2$.
Continuity of $G$, \eqref{eq:biased-reduction}, and
\eqref{eq:biased-odd-grid-asymptotic} now give, almost surely,
\begin{align*}
  [Y^{(\nu)}]_{(\mathbb T_{n_j}^{(3)})}
  &\longrightarrow \sigma^2+\frac59b^2=1-\frac49b^2,\\
  [Y^{(\nu)}]_{(\mathbb T_{m_j}^{(3)})}
  &\longrightarrow \sigma^2+\frac{53}{81}b^2
                   =1-\frac{28}{81}b^2.
\end{align*}
These are \eqref{eq:biased-triadic-limit-one} and
\eqref{eq:biased-triadic-limit-two}, and they are distinct because $b\ne0$.
\end{proof}

\subsection{Critical variation}

We now prove Theorem~\ref{thm:all-H-failure} and
Proposition~\ref{prop:thirds-fourth}.  We first establish a common limit
formula for partitions obtained by repeating a fixed template in each
dyadic cell.  A small dyadic split then gives partition dependence for
every $H\ne1/2$, while the thirds template permits an explicit calculation
at $H=1/4$.

\subsubsection{Finite-template limits}

Fix $H\in(0,1)$, set $p=1/H$ and $\lambda=2^{H-1}$, and let
\[
  W_n:=\sum_{r=1}^{n}\lambda^rY_{r-1},
  \qquad
  W:=\lambda Z_H=\sum_{r=1}^{\infty}\lambda^rY_{r-1},
\]
where the auxiliary Rademacher sequence $(Y_m)$ from
\eqref{eq:known-dyadic-limit} is independent of $X^H$.
For a finite template $\boldsymbol u=\{0=u_0<u_1<\cdots<u_q=1\}$, define
\begin{equation}
  \mathbb P_n(\boldsymbol u)
  :=\left\{2^{-n}(k+u_j):
       0\le k<2^n,\ 0\le j\le q\right\},
  \label{eq:template-partition}
\end{equation}
with repeated endpoints included only once, and set
\begin{equation}
  G_{\boldsymbol u,p}(w,x)
  :=\sum_{j=0}^{q-1}
    \left|(u_{j+1}-u_j)w+x(u_{j+1})-x(u_j)\right|^p.
  \label{eq:template-functional}
\end{equation}
The sequence $\bigl(\mathbb P_n(\boldsymbol u)\bigr)$ need not be
refining; the following limit requires only that the template be fixed.

\begin{lemma}[Finite-template limits]
  \label{lem:finite-template}
For every sign array,
\begin{equation}
  [X^H]_{(\mathbb T_n)}^{(p)}
  =\E|W_n|^p
  \longrightarrow\E|W|^p=C_{\mathrm{dyad}}(H).
  \label{eq:dyadic-pvar}
\end{equation}
For every fixed finite template $\boldsymbol u$,
\begin{equation}
  [X^H]_{(\mathbb P_n(\boldsymbol u))}^{(p)}
  \longrightarrow
  C_{\boldsymbol u}(H)
  :=\E G_{\boldsymbol u,p}(W,X^H)
  \qquad\text{almost surely}.
  \label{eq:template-limit}
\end{equation}
\end{lemma}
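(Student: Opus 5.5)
We would prove both assertions by one self-similar decomposition of $X^H$ on the level-$n$ dyadic cells. In each cell, the coarse levels contribute an affine piece, while the fine levels contribute a rescaled, independent copy of $X^H$. Set
\[
  D_{n,k}:=2^{nH}\bigl(X^H_{(k+1)2^{-n}}-X^H_{k2^{-n}}\bigr),
  \qquad 0\le k<2^n .
\]
As in the proof of Theorem~\ref{thm:sharp-logarithmic-mesh}(b), the levels below $n$ are affine on the $k$th level-$n$ cell. The level-$n$ tent has midpoint value $2^{-nH-1}\theta_{n,k}$, and all finer levels vanish at the endpoints and midpoint of that cell. Hence
\[
  D_{n+1,2k}=\lambda(D_{n,k}+\theta_{n,k}),\qquad
  D_{n+1,2k+1}=\lambda(D_{n,k}-\theta_{n,k}),\qquad D_{0,0}=0,
\]
with $\lambda=2^{H-1}$. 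Each pair contains both signs, whatever $\theta_{n,k}$ is. Induction therefore gives, for every sign array and every function $f$, the exact identity
\[
  2^{-n}\sum_k f(D_{n,k})=\E f(W_n),
\]
which is the analogue of \eqref{eq:exact-cube-average-log}. Taking $f(w)=|w|^p$ and using $p=1/H$, so that $|2^{-nH}D_{n,k}|^p=2^{-n}|D_{n,k}|^p$, yields $[X^H]^{(p)}_{(\mathbb T_n)}=\E|W_n|^p$.

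Since $|W_n|,|W|\le\sum_r\lambda^r<\infty$ and $W_n\to W$ uniformly, bounded convergence gives $\E|W_n|^p\to\E|W|^p$. Because $W=\lambda Z_H$ and $\lambda^p=2^{1-p}$, we have $\E|W|^p=2^{1-p}\E|Z_H|^p=C_{\mathrm{dyad}}(H)$. This proves \eqref{eq:dyadic-pvar}.

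For the template, fix $n$ and a cell $k$. For $u\in[0,1]$ define
\[
  X^{(n,k)}_u:=\sum_{m\ge0}2^{m(1/2-H)}\sum_{j<2^m}\theta_{n+m,2^mk+j}\,e_{m,j}(u).
\]
The scaling $e_{n+m,2^mk+j}(2^{-n}(k+u))=2^{-n/2}e_{m,j}(u)$ shows that the contribution of levels $\ge n$ at the point $2^{-n}(k+u)$ equals $2^{-nH}X^{(n,k)}_u$. Therefore each template increment inside the cell equals
\[
  2^{-nH}\bigl[(u_{j+1}-u_j)D_{n,k}+X^{(n,k)}_{u_{j+1}}-X^{(n,k)}_{u_j}\bigr].
\]
Using $p=1/H$ once more,
\[
  [X^H]^{(p)}_{(\mathbb P_n(\boldsymbol u))}
  =2^{-n}\sum_{k<2^n}G_{\boldsymbol u,p}\bigl(D_{n,k},X^{(n,k)}\bigr).
\]
The key structural facts are the following. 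Each $D_{n,k}$ is measurable with respect to the levels $<n$. The processes $X^{(n,k)}$, $k<2^n$, are i.i.d.\ copies of $X^H$, independent of those levels. Moreover, $G_{\boldsymbol u,p}$ is bounded on the relevant range by a constant depending only on $H$ and $\boldsymbol u$, because $|D_{n,k}|\le\sum_r\lambda^r$ and Lemma~\ref{lem:uniform-holder} bounds $|X^{(n,k)}_v-X^{(n,k)}_u|$ for every sign array.

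Let $\mathcal G_n$ denote the $\sigma$-field generated by the levels $<n$, and put $\Phi(w):=\E\,G_{\boldsymbol u,p}(w,X^H)$. Conditionally on $\mathcal G_n$, the sum above is an average of $2^n$ independent bounded terms. Its conditional mean is
\[
  2^{-n}\sum_k\Phi(D_{n,k})=\E\,\Phi(W_n)=\E\,G_{\boldsymbol u,p}(W_n,X^H),
\]
where the first equality is the exact identity above with $f=\Phi$. This conditional mean is therefore deterministic, and its conditional variance is at most $C2^{-n}$. Chebyshev's inequality (or Hoeffding's) and Borel--Cantelli then show that the difference between the template sum and $\E\,G_{\boldsymbol u,p}(W_n,X^H)$ tends to $0$ almost surely. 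Finally, $G_{\boldsymbol u,p}$ is continuous in $w$ and bounded, and $W_n\to W$ uniformly. Bounded convergence then gives $\E\,G_{\boldsymbol u,p}(W_n,X^H)\to\E\,G_{\boldsymbol u,p}(W,X^H)=C_{\boldsymbol u}(H)$, with $W$ independent of $X^H$. This proves \eqref{eq:template-limit}.

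The main point requiring care is the cellwise decomposition. One must check precisely that the fine levels form the rescaled independent copy $2^{-nH}X^{(n,k)}$, and that the same normalization $2^{nH}$ makes both the affine slope and the fine copy scale out together as the factor $2^{-n}$ under the critical power $p=1/H$. Once this is in place, the exact empirical-law identity removes all dependence on the sign array in the conditional mean, and the rest is a routine law of large numbers.
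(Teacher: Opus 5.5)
Your proposal is correct and follows essentially the same route as the paper: a cellwise split into a coarse affine slope (your $D_{n,k}$, the paper's $W_{n,k}$) plus a conditionally independent rescaled copy of $X^H$; the exact identity $2^{-n}\sum_k f(D_{n,k})=\E f(W_n)$ for every sign array; a deterministic conditional mean; and an $O(2^{-n})$ conditional variance converted into almost sure convergence. The only differences are cosmetic: you derive the sign enumeration from the increment recursion of Theorem~\ref{thm:sharp-logarithmic-mesh}(b) rather than from the coarse-level slope signs directly, and you use Chebyshev and Borel--Cantelli where the paper uses Tonelli.
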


\begin{proof}
On the dyadic cell $I_{n,k}=[k2^{-n},(k+1)2^{-n}]$, the levels $m<n$
are affine.  The level-$m$ term has slope of magnitude $2^{m(1-H)}$.
Writing $r=n-m$, its slope after rescaling the cell and dividing by
$2^{-nH}$ therefore has magnitude
$2^{nH-n}2^{(n-r)(1-H)}=\lambda^r$.
Consequently,
\begin{equation}
  X^H_{2^{-n}(k+u)}-X^H_{2^{-n}k}
  =2^{-nH}\bigl(uW_{n,k}+\widetilde X^{\,n,k}_u\bigr),
  \qquad 0\le u\le1,
  \label{eq:cell-decomposition}
\end{equation}
where, for suitable slope signs $\sigma^{(n)}_{r,k}\in\{-1,1\}$,
\begin{equation}
  W_{n,k}:=\sum_{r=1}^{n}\lambda^r\sigma^{(n)}_{r,k},
  \label{eq:coarse-slope}
\end{equation}
and the descendant terms form the process
\[
  \widetilde X^{\,n,k}_u
  :=\sum_{\ell=0}^{\infty}2^{\ell(1/2-H)}
      \sum_{v=0}^{2^\ell-1}
        \theta_{n+\ell,\,2^\ell k+v}e_{\ell,v}(u).
\]
Let $\mathcal F_{<n}$ be generated by the signs at levels below $n$.
The processes $\widetilde X^{\,n,k}$ use disjoint coefficient subtrees;
conditionally on $\mathcal F_{<n}$, they are independent copies of $X^H$.

For every sign array, the slope vectors
$(\sigma^{(n)}_{1,k},\ldots,\sigma^{(n)}_{n,k})$, $0\le k<2^n$,
enumerate $\{-1,1\}^n$ exactly once.  Indeed, bisecting a cell preserves
all its previous slope signs and adds the two opposite signs
$+\theta_{n,k}$ and $-\theta_{n,k}$.  Induction, with the coordinate order
reversed as in \eqref{eq:coarse-slope}, proves the assertion.
Since $\widetilde X^{\,n,k}_1=0$ and $Hp=1$, it follows that
\[
  [X^H]_{(\mathbb T_n)}^{(p)}
  =2^{-n}\sum_{k=0}^{2^n-1}|W_{n,k}|^p
  =\E|W_n|^p.
\]
The variables $W_n$ converge to $W$ and are uniformly bounded.
Thus bounded convergence and $\lambda^p=2^{1-p}$ give
\eqref{eq:dyadic-pvar}, with the normalization in
\eqref{eq:known-dyadic-limit}.

For the general template, \eqref{eq:cell-decomposition} gives
\begin{equation}
  [X^H]_{(\mathbb P_n(\boldsymbol u))}^{(p)}
  =2^{-n}\sum_{k=0}^{2^n-1}
       G_{\boldsymbol u,p}
       (W_{n,k},\widetilde X^{\,n,k}).
  \label{eq:template-average}
\end{equation}
Set $g(w):=\E G_{\boldsymbol u,p}(w,X^H)$.  Conditional independence
and the slope-vector enumeration imply
\begin{equation}
  \E\left[[X^H]_{(\mathbb P_n(\boldsymbol u))}^{(p)}
       \mid\mathcal F_{<n}\right]
  =2^{-n}\sum_{k=0}^{2^n-1}g(W_{n,k})
  =\E g(W_n)=:m_n.
  \label{eq:template-conditional-mean}
\end{equation}
Uniformly over the signs,
\[
  |W_{n,k}|\le\frac{\lambda}{1-\lambda},
  \qquad
  \lVert X^H\rVert_\infty\le\frac{1}{2(1-2^{-H})}.
\]
Hence $g$ is continuous and bounded on the common range of $W_n$ and
$W$, so $m_n\to\E g(W)$.  The same bounds give a deterministic constant
$B$ bounding every summand in \eqref{eq:template-average}.  Conditional
independence therefore yields
\[
  \E\left|[X^H]_{(\mathbb P_n(\boldsymbol u))}^{(p)}-m_n\right|^2
  \le B^2 2^{-n}.
\]
By Tonelli's theorem, the squared centered terms have a finite sum almost
surely and thus tend to zero.  Together with the limit of $m_n$, this
proves \eqref{eq:template-limit}.
\end{proof}

\subsubsection{The small-split comparison}

\begin{proof}[Proof of Theorem~\ref{thm:all-H-failure}]
Retain $p$, $\lambda$, and $W$ from Lemma~\ref{lem:finite-template}, and
write $M_p:=\E|W|^p=C_{\mathrm{dyad}}(H)$.  For $r\ge1$, let
$t_r=2^{-r}$ and $\boldsymbol u_r=\{0,t_r,1\}$, so that
$\mathbb P_n(\boldsymbol u_r)=\mathbb V_n^{(r)}$.
Lemma~\ref{lem:finite-template} gives
\begin{align}
  [X^H]_{(\mathbb V_n^{(r)})}^{(p)}
  &\longrightarrow C_r(H)
  \qquad\text{almost surely},\notag\\
  C_r(H)&:=\E\left[
       |t_rW+X^H_{t_r}|^p
       +|(1-t_r)W-X^H_{t_r}|^p
     \right].
  \label{eq:small-split-limit}
\end{align}

We compare $C_r(H)$ with $M_p$ as $r\to\infty$.  Write $t=t_r$ and
$a=t^H$.  At the dyadic point $t=2^{-r}$, all levels at least $r$
vanish and the remaining tents are on their increasing branches.  Thus
\begin{equation}
  X^H_t
  =t\sum_{m=0}^{r-1}2^{m(1-H)}\theta_{m,0}
  \stackrel d=aU_r,
  \qquad
  U_r:=\sum_{j=1}^{r}\lambda^j\xi_j,
  \label{eq:XH-small-dyadic-time}
\end{equation}
where $(\xi_j)$ is an independent Rademacher sequence, independent of
$W$.  Since $a\lambda^r=t$ and $U_r+\lambda^rW\stackrel d=W$, the
first term in \eqref{eq:small-split-limit} contributes $a^pM_p=tM_p$.
Symmetry of $U_r$ then gives
\begin{equation}
  C_r(H)=tM_p+\E|(1-t)W+aU_r|^p.
  \label{eq:Cr-exact}
\end{equation}
With $c_r=a/(1-t)$, we obtain the comparison identity
\begin{equation}
  C_r(H)-M_p
  =\bigl[t+(1-t)^p-1\bigr]M_p
   +(1-t)^p\left(\E|W+c_rU_r|^p-M_p\right).
  \label{eq:Cr-master}
\end{equation}
Here $W$ and all $U_r$ are bounded by $\lambda/(1-\lambda)$,
$c_r\to0$, and $\E U_r^2\to\E W^2>0$.

Suppose first that $H<1/2$, so $p>2$.  The function $f(x)=|x|^p$
has a continuous second derivative.  Taylor's formula, uniformly on the
common bounded range, and $\E U_r=0$ therefore yield
\begin{equation}
  \E|W+c_rU_r|^p
  =M_p+\frac{p(p-1)}2c_r^2
       \E|W|^{p-2}\,\E U_r^2+o(c_r^2).
  \label{eq:p-greater-two-expansion}
\end{equation}
Since $t+(1-t)^p-1=(1-p)t+O(t^2)$,
$(1-t)^pc_r^2=t^{2H}(1-t)^{p-2}$, and $t=o(t^{2H})$,
\eqref{eq:Cr-master} gives
\begin{equation}
  C_r(H)-M_p
  \sim\frac{p(p-1)}2\,t^{2H}
       \E|W|^{p-2}\,\E W^2>0.
  \label{eq:Cr-positive-asymptotic}
\end{equation}

Now suppose that $H>1/2$, so $1<p<2$.  We first note that $W$ has
no atoms.  Indeed, its law $\mu$ satisfies
$W\stackrel d=\lambda(Y+W')$, where $Y$ is Rademacher and $W'$ is
an independent copy of $W$.  If $\mu$ had atoms, its largest atom mass
$m>0$ would be attained, and the set
$A=\{x:\mu(\{x\})=m\}$ would be finite and nonempty.  For every
$x\in A$,
\[
  m=\frac12\mu\left(\left\{x/\lambda-1\right\}\right)
    +\frac12\mu\left(\left\{x/\lambda+1\right\}\right).
\]
Both masses on the right must equal $m$.  Thus, with
$\alpha=\min A$ and $\beta=\max A$, the points
$\alpha/\lambda-1$ and $\beta/\lambda+1$ belong to $A$ but are
separated by $(\beta-\alpha)/\lambda+2>\beta-\alpha$, a contradiction.

The derivative of $f(x)=|x|^p$ is $(p-1)$-H\"older continuous, so
\begin{equation}
  \left||x+y|^p-|x|^p-p|x|^{p-2}xy\right|
  \le K_p|y|^p,
  \qquad x,y\in\mathbb R,
  \label{eq:p-less-two-remainder}
\end{equation}
where $|x|^{p-2}x$ is interpreted as zero at $x=0$.
For $W\ne0$, the ordinary second-order Taylor expansion and the uniform
bound on $U_r$ imply
\[
  \frac{|W+c_rU_r|^p-|W|^p
        -p|W|^{p-2}Wc_rU_r}{c_r^p}
  =O_W(c_r^{2-p})\longrightarrow0.
\]
The bound \eqref{eq:p-less-two-remainder} dominates these quotients by
$K_p(\lambda/(1-\lambda))^p$.  Since $\Pp(W=0)=0$, dominated
convergence, independence, and $\E U_r=0$ give
\begin{equation}
  \E|W+c_rU_r|^p=M_p+o(c_r^p).
  \label{eq:p-less-two-expansion}
\end{equation}
As $(1-t)^pc_r^p=t$, \eqref{eq:Cr-master} now yields
\begin{equation}
  C_r(H)-M_p=(1-p)tM_p+o(t)<0
  \label{eq:Cr-negative-asymptotic}
\end{equation}
for all sufficiently large $r$.

In either case, fix $r$ large enough that $C_r(H)\ne M_p$ with the
claimed sign.  The split points are dyadic of level $n+r$, so
\begin{equation}
  \mathbb T_n\subseteq\mathbb V_n^{(r)}
  \subseteq\mathbb T_{n+r}.
  \label{eq:small-split-containment}
\end{equation}
It follows that the sequence in \eqref{eq:alternating-partition-main}
is refining and has vanishing mesh.  By
\eqref{eq:small-split-limit} and \eqref{eq:dyadic-pvar}, its odd terminal
sums converge almost surely to $C_r(H)$ and its even terminal sums to
$M_p$.  Their distinct limits prove the theorem.
\end{proof}

At $H=1/2$, the same calculation gives $p=2$, $M_2=1$, and
$\E U_r^2=1-t_r$.  Hence \eqref{eq:Cr-exact} yields $C_r(1/2)=1$
for every $r$, in agreement with Theorem~\ref{thm:main refining}.

\subsubsection{The explicit thirds-grid limit}

\begin{proof}[Proof of Proposition~\ref{prop:thirds-fourth}]
Take the template $\boldsymbol u_\star=\{0,1/3,2/3,1\}$. Then
$\mathbb P_n(\boldsymbol u_\star)=\mathbb U_n$.
Lemma~\ref{lem:finite-template} therefore gives
\begin{equation}
  C_{\mathrm{thirds}}
  =\E\sum_{j=0}^{2}
    \left|\frac13W
      +X^{1/4}_{(j+1)/3}-X^{1/4}_{j/3}\right|^4,
  \label{eq:thirds-constant-representation}
\end{equation}
where $\lambda=2^{-3/4}$ and $W$ is independent of $X^{1/4}$.
Write $b=2^{-1/4}$.  The dyadic orbit of $1/3$ or $2/3$ is always at
relative position $1/3$ or $2/3$ in its tent.  Consequently,
\[
  X^{1/4}_{1/3}=\frac13\sum_{m\ge0}b^m\xi_m,
  \qquad
  X^{1/4}_{2/3}=\frac13\sum_{m\ge0}b^m\zeta_m.
\]
The two points share the level-zero tent, so $\xi_0=\zeta_0$.
At every higher level they lie in distinct tents.  Thus
$\xi_0$ and $\{\xi_m,\zeta_m:m\ge1\}$ are independent Rademacher
variables, also independent of $W$.

Denote the three increments in \eqref{eq:thirds-constant-representation}
by $D_1,D_2,D_3$.  Then
\[
  3D_1=W+\sum_{m\ge0}b^m\xi_m,
  \qquad
  3D_2=W+\sum_{m\ge1}b^m(\zeta_m-\xi_m),
\]
and $D_3\stackrel d=D_1$.  For an absolutely summable coefficient
sequence and independent Rademacher variables $(\eta_i)$,
\begin{equation}
  \E\left(\sum_i a_i\eta_i\right)^4
  =3\left(\sum_i a_i^2\right)^2-2\sum_i a_i^4.
  \label{eq:rademacher-fourth-moment}
\end{equation}
This follows by expanding finite sums and then using bounded convergence.
The coefficient sums for the three series needed here are geometric:
\[
  \begin{array}{c|cc}
    \text{series}&\displaystyle\sum_i a_i^2
                 &\displaystyle\sum_i a_i^4\\[3pt]\hline
    W&\dfrac{1+2\sqrt2}{7}&\dfrac17\\[7pt]
    3D_1&\dfrac{15+9\sqrt2}{7}&\dfrac{15}{7}\\[7pt]
    3D_2&\dfrac{15+16\sqrt2}{7}&\dfrac{15}{7}
  \end{array}
\]
Applying \eqref{eq:rademacher-fourth-moment} gives
\begin{equation}
  C_{\mathrm{dyad}}(1/4)=\E W^4
  =\frac{13+12\sqrt2}{49},
  \label{eq:dyadic-fourth-constant}
\end{equation}
and
\[
  \E D_1^4=\frac{317+270\sqrt2}{1323},
  \qquad
  \E D_2^4=\frac{667+480\sqrt2}{1323}.
\]
Hence
\[
  C_{\mathrm{thirds}}
  =2\E D_1^4+\E D_2^4
  =\frac{1301+1020\sqrt2}{1323},
\]
and subtraction gives the positive difference in
\eqref{eq:different-constants}.
\end{proof}

\subsection{Invariance of variation index}

\subsubsection{Proofs}
\label{sec:proofs-positive-critical}

Throughout this subsection $H\in(0,1)$, $p=1/H$, and we write
\[
  \begin{aligned}
    f_{m,k}&:=2^{m(1/2-H)}e_{m,k},
    &\|f_{m,k}\|_\infty&=2^{-mH-1},\\
    \operatorname{Lip}(f_{m,k})&=2^{m(1-H)},
    &\operatorname{supp}f_{m,k}&=[k2^{-m},(k+1)2^{-m}],
  \end{aligned}
\]
so that $X^H=\sum_{m,k}\theta_{m,k}f_{m,k}$. For an interval
$I=(u,v]$ and a function $g$ we write $\Delta_Ig:=g(v)-g(u)$ and
$|I|:=v-u$.

\begin{proof}[Proof of Proposition~\ref{prop:deterministic-upper}]
By Lemma~\ref{lem:uniform-holder} and $pH=1$,
\[
  [X^H]^{(p)}_{(\tau)}
  =\sum_{I\in\tau}|\Delta_IX^H|^p
  \le L_H^p\sum_{I\in\tau}|I|^{pH}
  =L_H^p\sum_{I\in\tau}|I|=L_H^p .
\]
For $q>p$, $[X^H]^{(q)}_{(\pi_n)}
\le\omega_{X^H}(\mesh\pi_n)^{q-p}[X^H]^{(p)}_{(\pi_n)}
\le L_H^q\mesh(\pi_n)^{(q-p)H}\to0$. Hence
$p^\pi(X^H)\le p$ by Definition~\ref{def. variation index}.
\end{proof}

\begin{lemma}[Second moment of an increment]
\label{lem:increment-second-moment}
There is a constant $c_H>0$ such that, for every interval $I=(u,v]$ with
$0<|I|\le1/2$,
\begin{equation}
  c_H|I|^{2H}\le\E\bigl(\Delta_IX^H\bigr)^2\le L_H^2|I|^{2H}.
  \label{eq:increment-second-moment}
\end{equation}
Consequently there is $c_H'>0$ such that
$\E|\Delta_IX^H|^p\ge c_H'|I|$ for $0<|I|\le1/2$, and
\begin{equation}
  \E\,[X^H]^{(p)}_{(\tau)}\ge c_H'
  \qquad\text{for every partition $\tau$ with }\mesh(\tau)\le1/2.
  \label{eq:critical-mean-lower}
\end{equation}
\end{lemma}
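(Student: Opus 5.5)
The upper bound in \eqref{eq:increment-second-moment} is immediate from Lemma~\ref{lem:uniform-holder}, since $(\Delta_IX^H)^2\le L_H^2|I|^{2H}$ holds for every sign array. For the lower bound we use orthogonality. The signs are independent and centered, so
\[
  \E(\Delta_IX^H)^2=\sum_{m,k}(\Delta_If_{m,k})^2 .
\]
It therefore suffices to exhibit a single pair $(m,k)$ with $|\Delta_If_{m,k}|\ge c|I|^H$.

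Choose $m\ge1$ with $2^{-m-1}<|I|\le2^{-m}$; this is possible because $|I|\le1/2$. On a level-$(m+2)$ scale, one of the branches of the level-$(m+1)$ or level-$(m+2)$ tents fits inside $I$ in a controlled way. Concretely, we take $n=m+2$, so that $2^{-n}\le|I|/2$. The interval $I$ then contains a full dyadic interval $J$ of length $2^{-n}$. Let $J$ be the left half of some level-$(n-1)$ tent support $S$, where we allow either half, replacing increasing by decreasing where needed.

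The case analysis is the main obstacle, because the tent $f_{n-1,k}$ whose support is $S$ may have only part of its support inside $I$, and cancellation between its rising and falling parts must be excluded. We handle it as follows.
\begin{itemize}
  \item If $S\subseteq I$, the increment of $f_{n-1,k}$ over $I$ is zero. In that case we pass to a finer level: take a level-$n$ tent whose support lies in $I$ but which contains an endpoint region instead.
  \item A cleaner route is to pick the level $n$ with $2^{-n}\le|I|<2^{1-n}$ and look at the level-$n$ tent containing the left endpoint $u$ in its support. Its increment over $I$ equals $-f_{n,k}(u)$ plus possibly $f_{n,k}(v)$, and $v$ lies outside or on the boundary of that support because $|I|\ge2^{-n}$. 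So $\Delta_If_{n,k}=-f_{n,k}(u)$.
  \item This is small if $u$ is near a dyadic point. In that case we use the level-$(n+1)$ tent instead, for which $u$ sits near the peak.
  \item Among the level-$n$ and level-$(n+1)$ tents containing $u$, at least one has $f(u)\ge\frac14\|f\|_\infty$.
\end{itemize}
To check the last point, note that the distance from $u$ to the nearest level-$n$ dyadic point is either at least $2^{-n-2}$, or less than $2^{-n-2}$. In the second case $u$ is at distance at least $2^{-n-2}$ from every level-$(n+1)$ endpoint other than that nearest one. We verify this by scaling to $[0,1]$. It yields $|\Delta_If|\ge c2^{-nH}\ge c'|I|^H$, provided $v$ lies outside the support of the level-$(n+1)$ tent too, which holds since $|I|\ge2^{-n}>2^{-n-1}$.

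For the $p$-th moment, we combine the second-moment bounds with the deterministic bound $|\Delta_IX^H|\le L_H|I|^H$. If $p\ge2$, Jensen's inequality gives $\E|\Delta|^p\ge(\E\Delta^2)^{p/2}\ge c_H^{p/2}|I|$. If $p<2$, we write $\E\Delta^2\le\|\Delta\|_\infty^{2-p}\E|\Delta|^p$, which gives $\E|\Delta|^p\ge c_HL_H^{p-2}|I|^{2H-(2-p)H}=c_HL_H^{p-2}|I|$. Finally, summing over $I\in\tau$ with $\mesh(\tau)\le1/2$ gives $\E[X^H]^{(p)}_{(\tau)}\ge c_H'\sum_I|I|=c_H'$, which is \eqref{eq:critical-mean-lower}.
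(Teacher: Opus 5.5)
\textbf{Verdict: there is a genuine gap.} Several parts are fine: the reduction to $\E(\Delta_IX^H)^2=\sum_{m,k}(\Delta_If_{m,k})^2$, the idea of bounding this sum by a single term, the upper bound, and the passage to $p$-th moments and to \eqref{eq:critical-mean-lower}. The last of these is exactly the paper's argument. The gap is in how you select the tent.

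Your claim that one of the level-$n$ and level-$(n+1)$ tents containing $u$ satisfies $f(u)\ge\frac14\|f\|_\infty$ is false. Every level-$n$ dyadic point $d$ is also a level-$(n+1)$ dyadic point. So if $0<|u-d|=\epsilon<2^{-n-4}$, then $d$ is an endpoint of the level-$(n+1)$ cell containing $u$ as well as of the level-$n$ cell. The two tent values at $u$ are then $2^{n(1-H)}\epsilon$ and $2^{(n+1)(1-H)}\epsilon$, both below a quarter of the respective maxima $2^{-nH-1}$ and $2^{-(n+1)H-1}$. If $u=d$ exactly, both values are zero. Your observation that $u$ is far from the \emph{other} level-$(n+1)$ points does not help: a tent's value is governed by the distance to the nearer endpoint of its own cell.

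No selection rule using only levels $\ge n$ can work. For $I=(k2^{-n},(k+1)2^{-n}]$, every tent of level $\ge n$ vanishes at both endpoints, so all fine-level increments are zero. Shifting this $I$ by a small $\epsilon$ leaves a total fine-level contribution of order at most $\epsilon^{2H}$, while $|I|^{2H}=2^{-2nH}$ stays fixed.

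The variance lives in the coarse levels $m<n$. On those tents, $I$ lies on a linear piece of slope $\pm2^{m(1-H)}$ unless it contains a break point. This is what the paper exploits. Take $2^{-N-1}<|I|\le2^{-N}$ and $|I|\le1/16$. Then $I$ contains at most one dyadic point of level $\le N-2$. Hence every level $m\le N-3$, except possibly the one for which that point is a tent midpoint, contributes at least $2^{m(2-2H)}|I|^2/2$, and summing gives $c_H|I|^{2H}$. Your single-term idea can be rescued the same way by comparing two adjacent coarse levels near $N-3$, since at most one of them is spoiled.

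For $1/16<|I|\le1/2$ there are too few coarse levels. There the paper uses continuity and strict positivity of $(u,v)\mapsto\E(\Delta_IX^H)^2$ on a compact set. Your proposal has no substitute for this range: for example, for $I=(0,1/2]$ only the level-$0$ tent has a nonzero increment.
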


\begin{proof}
The upper bound is Lemma~\ref{lem:uniform-holder}. For the lower bound,
let $h_{m,k}=e_{m,k}'$ be the Haar functions of the proof of
Lemma~\ref{lem:haar-energy}. Since $\Delta_If_{m,k}=2^{m(1/2-H)}\int_uvh_{m,k}$
and the signs are independent with unit variance,
\begin{equation}
  \E\bigl(\Delta_IX^H\bigr)^2
  =\sum_{m\ge0}2^{m(1-2H)}\sum_{k<2^m}
    \Bigl(\int_u^vh_{m,k}\Bigr)^2.
  \label{eq:increment-variance-haar}
\end{equation}
Write $\delta=|I|$ and choose $N\ge0$ with $2^{-N-1}<\delta\le2^{-N}$.

Suppose first that $\delta\le1/16$, so $N\ge4$. The dyadic points of level
at most $N-2$ are spaced $2^{-N+2}\ge4\delta$ apart, so $I$ contains at
most one of them; call it $d$ if it exists, and let $j\le N-2$ be its
level. Fix a level $m\le N-3$. The endpoints and midpoints of the level-$m$
cells are dyadic points of level at most $N-2$. If none of them lies in
$I$, then $I$ is contained in a single level-$m$ cell on one side of its
midpoint, exactly one $k$ contributes to the inner sum in
\eqref{eq:increment-variance-haar}, and its contribution is
$(2^{m/2}\delta)^2=2^m\delta^2$. If $d\in I$ is an endpoint of a
level-$m$ cell, then $I$ meets two cells, each piece lies on one side of
the midpoint of its cell because its length is less than a quarter of the
cell, and the inner sum equals
$2^m\bigl((d-u)^2+(v-d)^2\bigr)\ge2^m\delta^2/2$. If $d$ is the midpoint
of the level-$m$ cell containing $I$, which happens only for $m=j-1$, the
contribution may be smaller and we discard it. Hence
\[
  \E\bigl(\Delta_IX^H\bigr)^2
  \ge\frac{\delta^2}2\Bigl(\sum_{m=0}^{N-3}2^{m(2-2H)}-2^{(N-3)(2-2H)}\Bigr)
  \ge\frac{\delta^2}2\,2^{(N-4)(2-2H)},
\]
where we used that the sum is at least
$2^{(N-3)(2-2H)}(1+2^{-(2-2H)})$. Since $2^N>(2\delta)^{-1}$, this is at
least $c_H\delta^{2H}$ with $c_H=2^{-1-6(2-2H)}$.

For $1/16<\delta\le1/2$, the function $(u,v)\mapsto\E(\Delta_IX^H)^2$
is continuous, because the series \eqref{eq:XH} converges uniformly, and
it is strictly positive: it vanishes only if
$\int_u^vh_{m,k}=0$ for all $(m,k)$, i.e.\ if $\one_{(u,v]}$ is orthogonal
to $H_0$ and hence constant a.e., which forces $\delta\in\{0,1\}$. A
continuous positive function on the compact set
$\{(u,v):1/16\le v-u\le1/2\}$ is bounded below by a positive constant,
and \eqref{eq:increment-second-moment} follows after decreasing $c_H$.
The cutoff $1/2$ can be replaced by any fixed $0<a<1$,
with the lower-bound constant also depending on $a$. A cutoff away from
one is necessary: for $I=(0,1-\varepsilon]$, the bridge property gives
$\E(\Delta_I X^H)^2\le L_H^2\varepsilon^{2H}\to0$, whereas
$|I|^{2H}\to1$.

For the moment bound, write $Z=\Delta_IX^H$, so $|Z|\le L_H\delta^H$. If
$p\ge2$, Jensen's inequality gives
$\E|Z|^p\ge(\E Z^2)^{p/2}\ge c_H^{p/2}\delta^{pH}=c_H^{p/2}\delta$. If
$1<p<2$, then $Z^2=|Z|^p|Z|^{2-p}\le|Z|^p(L_H\delta^H)^{2-p}$, so
$\E|Z|^p\ge\E Z^2\,(L_H\delta^H)^{p-2}\ge c_HL_H^{p-2}\delta^{pH}
=c_HL_H^{p-2}\delta$. Summing over the intervals of $\tau$ gives
\eqref{eq:critical-mean-lower}.
\end{proof}

\begin{lemma}[Bounded differences of the critical sum]
\label{lem:bounded-differences}
Let $\tau$ be any partition of $[0,1]$ with $h:=\mesh(\tau)\le1/2$. Let
$S(\theta):=[X^H]^{(p)}_{(\tau)}$, viewed as a function of the sign array
$\theta=(\theta_{m,k})$, and let $c_{m,k}$ denote the maximal change of
$S$ when $\theta_{m,k}$ alone is flipped. Then
\begin{equation}
  \sum_{m\ge0}\sum_{k<2^m}c_{m,k}^2
  \le C_H\bigl(h^{\kappa_H}+h\one_{\{H=1/2\}}\log(e/h)\bigr),
  \label{eq:bounded-differences-sum}
\end{equation}
with $\kappa_H$ as in \eqref{eq:kappa-H} and a constant $C_H$
depending only on $H$.
\end{lemma}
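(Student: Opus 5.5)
Our plan is to bound each $c_{m,k}$ by a deterministic quantity depending only on how the support of $f_{m,k}$ sits relative to $\tau$, and then sum level by level. Flipping $\theta_{m,k}$ changes $X^H$ by $\pm2f_{m,k}$, so only intervals $I\in\tau$ meeting $\operatorname{supp}f_{m,k}$ are affected. For each such $I$, the elementary inequality $\bigl||a|^p-|b|^p\bigr|\le p|a-b|\max(|a|,|b|)^{p-1}$ applies. Combined with $|\Delta_IX^H|\le L_H|I|^H$ from Lemma~\ref{lem:uniform-holder}, which holds for every sign array, this gives
\[
  c_{m,k}\le 2p\sum_{I\in\tau,\ I\cap\operatorname{supp}f_{m,k}\ne\varnothing}
     |\Delta_If_{m,k}|\,(L_H|I|^H+2|\Delta_I f_{m,k}|)^{p-1}.
\]
Since $|\Delta_If_{m,k}|\le\min\{2^{m(1-H)}|I|,2^{-mH}\}\le 2^{m(1-H)}|I|$, the bracket is at most $C|I|^{H}$ up to a harmless extra term. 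Hence each summand is at most $C\min\{2^{m(1-H)}|I|,2^{-mH}\}|I|^{1-H}$, using $H(p-1)=1-H$.

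We then split into two regimes according to whether $2^{-m}\ge h$ or $2^{-m}<h$. In the coarse regime, $2^{-m}\ge h$, every interval has $|I|\le h\le2^{-m}$. The intervals meeting the support have total length at most $2^{-m}+2h\le3\cdot2^{-m}$, so
\[
  c_{m,k}\le C2^{m(1-H)}h^{1-H}\sum_I|I|\le C2^{-mH}h^{1-H}.
\]
Summing $c_{m,k}^2$ over the $2^m$ indices $k$ gives $C2^{m(1-2H)}h^{2-2H}$. Summing over $m\le\log_2(1/h)$ then gives $Ch^{2-2H}\cdot h^{-(1-2H)}=Ch$ when $H<1/2$, $Ch\log(e/h)$ when $H=1/2$, and $Ch^{2-2H}$ when $H>1/2$. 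In the fine regime, $2^{-m}<h$, the support of $f_{m,k}$ meets at most two intervals, and $|\Delta_If_{m,k}|\le 2\cdot 2^{-mH}$ is nonzero only for intervals containing a point of $\operatorname{supp}f_{m,k}$ in their interior or at an endpoint. A cruder count is better here: $f_{m,k}$ vanishes at the endpoints of its support, so $\Delta_If_{m,k}\neq0$ requires an endpoint of $I$ to lie strictly inside $\operatorname{supp}f_{m,k}$. Thus only the $k$ whose support contains a partition point contribute, and for such $k$ we have $c_{m,k}\le C2^{-mH}h^{1-H}$ (two intervals, each of length at most $h$). The number of such $k$ at level $m$ is at most $2|\tau|$. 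This count alone is too weak, so we refine it: an interval $I$ of length $|I|$ contributes to at most two $k$ per level, with bound $C2^{-mH}|I|^{1-H}$. Hence
\[
  \sum_k c_{m,k}^2\le C2^{-2mH}\sum_{I\in\tau}|I|^{2-2H}\le C2^{-2mH}h^{1-2H}\ \text{ if } H\le 1/2,\qquad\le C2^{-2mH}\,h^{1-2H}\ \text{(bounded using }|I|\le h\text{, }\textstyle\sum|I|=1).
\]
Since $2-2H\ge1$ exactly when $H\le1/2$, we get $\sum_I|I|^{2-2H}\le h^{1-2H}$ for $H\le 1/2$. Summing the geometric series over $2^{-m}<h$ gives $Ch^{2H}h^{1-2H}=Ch$. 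For $H>1/2$ we instead use $|I|^{2-2H}$ together with Hölder to get $\sum_I|I|^{2-2H}\le(\#\text{relevant})^{2H-1}$, which is unbounded. So for $H>1/2$ we bound $\sum_I|I|^{2-2H}$ by summing only over intervals adjacent to level-$m$ support points, giving at most $2^m\cdot(\min(h,\cdot))$; the result is $\sum_k c_{m,k}^2\le C2^{-2mH}\min(2^m,|\tau|)h^{2-2H}$, and summing over $m$ yields $Ch^{2-2H}$ since $2^{m(1-2H)}$ is decaying.

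Combining the regimes gives $C_H(h^{\min(1,2-2H)}+h\log(e/h)\one_{\{H=1/2\}})$, which is \eqref{eq:bounded-differences-sum}. We expect the fine regime for $H>1/2$ to be the main obstacle. There the many tiny tents near partition points must be counted against interval lengths without losing a factor of $|\tau|$. We would first try organizing the sum by partition points, since each point $t_i$ lies in one support per level, rather than by intervals. This gives $\sum_{m:2^{-m}<h}\sum_i 2^{-2mH}(|I_{i-1}|+|I_i|)^{2-2H}$, which we then need to bound by $Ch^{2-2H}$.
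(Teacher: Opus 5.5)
Your coarse-level estimate is fine. The fine-level step for $H\le 1/2$ is the only place where more than a uniform bound on $c_{m,k}$ is needed, and it rests on a false claim: that for $2^{-m}<h$ the support of $f_{m,k}$ meets at most two intervals of $\tau$. But $h=\mesh(\tau)$ is the \emph{largest} interval length, so $\tau$ may contain arbitrarily many tiny intervals inside a single level-$m$ support. For example, take $\tau\supseteq\{0,\frac14,\frac12,\frac34,1\}\cup\{j2^{-20}:0\le j\le 2^{10}\}$, so that $h=\frac14$, and take $m=5$. Your inference ``each interval contributes to at most two $k$ per level, hence $\sum_k c_{m,k}^2\le C2^{-2mH}\sum_I|I|^{2-2H}$'' uses exactly this claim. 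Squaring $c_{m,k}\le C\sum_I|\Delta_I f_{m,k}|\,|I|^{1-H}$ produces cross terms. With only the per-interval bound $|\Delta_I f_{m,k}|\le 2^{-mH}$, you lose a factor equal to the number of intervals meeting the support, and that number is unbounded. Your justification of $c_{m,k}\le C2^{-mH}h^{1-H}$ in this regime (``two intervals'') has the same defect.

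The missing ingredient is the total-variation bound $\sum_{I\in\tau}|\Delta_I f_{m,k}|\le 2^{-mH}$. The paper combines it with a weighted Cauchy--Schwarz inequality to get $c_{m,k}^2\le C2^{-mH}\sum_I|I|^{2-2H}|\Delta_I f_{m,k}|$. Alternatively, let $\mathcal J_{m,k}$ be the intervals with an endpoint inside the open support; then $c_{m,k}\le C2^{-mH}\max_{I\in\mathcal J_{m,k}}|I|^{1-H}$. With that bound, your two-per-level count legitimately gives $\sum_k c_{m,k}^2\le C2^{-2mH}\sum_I|I|^{2-2H}$ and the total $Ch$. The paper also avoids your global split at $h$: using $\sum_k|\Delta_I f_{m,k}|\le\min\{2^{m(1-H)}|I|,2^{-mH}\}$, it splits the levels at each interval's own length.

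Your closing diagnosis is also misplaced. For $H>1/2$, the total-variation bound gives $c_{m,k}\le C2^{-mH}h^{1-H}$ for every $(m,k)$. Then $\sum_m 2^m 2^{-2mH}h^{2-2H}\le C'h^{2-2H}$, with no splitting at all. The partition-point reorganization you propose instead amounts to $h^{2H}\sum_i(|I_{i-1}|+|I_i|)^{2-2H}\le Ch^{2-2H}$. That inequality fails when $\tau$ contains many tiny intervals, because $2-2H<1$.
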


\begin{proof}
Flipping $\theta_{m,k}$ replaces $X^H$ by $X^H\mp2f_{m,k}$, which is again
a member of the class \eqref{eq:XH}. By the mean value theorem,
$\bigl||a|^p-|b|^p\bigr|\le p\max(|a|,|b|)^{p-1}|a-b|$, and both $a$ and
$b$ are increments over $I$ of functions in the class, so
Lemma~\ref{lem:uniform-holder} and $(p-1)H=1-H$ give
\begin{equation}
  c_{m,k}\le2pL_H^{p-1}\sum_{I\in\tau}|I|^{1-H}\,|\Delta_If_{m,k}|.
  \label{eq:c-mk-bound}
\end{equation}
Put $\delta_I:=|I|$ and $\ell_m:=2^{-m}$. The total variation of
$f_{m,k}$ is $\ell_m^H$, twice its maximum height. Hence
$\sum_{I\in\tau}|\Delta_If_{m,k}|\le\ell_m^H$, and weighted
Cauchy--Schwarz applied to \eqref{eq:c-mk-bound} gives
\begin{equation}
  c_{m,k}^2
  \le C_H\ell_m^H\sum_{I\in\tau}
             \delta_I^{2-2H}|\Delta_If_{m,k}|.
  \label{eq:weighted-differences-critical}
\end{equation}
At each level, the tent derivatives satisfy
$\sum_{k<2^m}|f_{m,k}'(t)|=\ell_m^{H-1}$ for almost every $t$.
Integration over $I$ therefore bounds
$\sum_k|\Delta_If_{m,k}|$ by $\ell_m^{H-1}\delta_I$.
Also, each endpoint belongs to at most one nonzero tent, whose height
is at most $\ell_m^H/2$. The triangle inequality gives the second bound
in
\begin{equation}
  \sum_{k<2^m}|\Delta_If_{m,k}|
  \le\min\{\ell_m^{H-1}\delta_I,\ell_m^H\}.
  \label{eq:tent-increment-bound}
\end{equation}
Combining these estimates and summing nonnegative terms yields
\[
  \sum_{m\ge0}\sum_{k<2^m}c_{m,k}^2
  \le C_H\sum_{I\in\tau}\delta_I^{2-2H}Q_H(\delta_I),
\]
where
\[
  Q_H(\delta)
  :=\delta\sum_{\ell_m\ge\delta}\ell_m^{2H-1}
       +\sum_{\ell_m<\delta}\ell_m^{2H}.
\]
The two geometric sums imply
\[
  Q_H(\delta)\le C_H
  \begin{cases}
    \delta^{2H},&H<1/2,\\
    \delta\log(e/\delta),&H=1/2,\\
    \delta,&H>1/2.
  \end{cases}
\]
Consequently,
\[
  \sum_{m\ge0}\sum_{k<2^m}c_{m,k}^2
  \le C_H
  \begin{cases}
    h,&H<1/2,\\
    h\log(e/h),&H=1/2,\\
    h^{2-2H},&H>1/2.
  \end{cases}
\]
Here we used $\sum_I\delta_I=1$ and $\delta_I\le h$; in the middle
case, $t\mapsto t\log(e/t)$ is increasing on $(0,1]$.
This proves \eqref{eq:bounded-differences-sum} without any restriction
on the relative lengths of the partition intervals.
\end{proof}

\begin{proof}[Proof of Theorem~\ref{thm:critical-concentration}]
Write
$S_n:=[X^H]^{(p)}_{(\pi_n)}$ and $h_n:=\mesh(\pi_n)$. For $n$ large,
$h_n\le1/2$, and \eqref{eq:critical-mean-bounds} follows from
Proposition~\ref{prop:deterministic-upper} and
Lemma~\ref{lem:increment-second-moment}.

For the concentration, let $S_n^{[M]}$ be the same sum computed from the
truncation $X^{[M]}=\sum_{m<M}\sum_k\theta_{m,k}f_{m,k}$.
It is a function of finitely many independent signs.
The proof of Lemma~\ref{lem:uniform-holder} gives the same H\"older
constant $L_H$ when levels are omitted, both before and after a sign
flip. Thus the explicit sensitivity estimates in
\eqref{eq:c-mk-bound}--\eqref{eq:tent-increment-bound} apply to
$S_n^{[M]}$ as well. Summing them as in
Lemma~\ref{lem:bounded-differences} bounds the sum of squared
sensitivities by $C_Hh_n^{\kappa_H}$, uniformly in $M$, when
$H\ne1/2$. McDiarmid's inequality therefore gives, for every
$\varepsilon>0$,
\[
  \Pp\bigl(|S_n^{[M]}-\E S_n^{[M]}|>\varepsilon\bigr)
  \le2\exp\Bigl(-\frac{2\varepsilon^2}{C_H}h_n^{\kappa_H}\Bigr).
\]
As $M\to\infty$, $X^{[M]}\to X^H$ uniformly, hence $S_n^{[M]}\to S_n$ for
every sign array and, by bounded convergence, $\E S_n^{[M]}\to\E S_n$.
Fatou's lemma applied to the indicators of the strict exceedance events
yields
\begin{equation}
  \Pp\bigl(|S_n-\E S_n|>\varepsilon\bigr)
  \le2\exp\Bigl(-\frac{2\varepsilon^2}{C_H}h_n^{\kappa_H}\Bigr).
  \label{eq:critical-mcdiarmid}
\end{equation}
Under \eqref{eq:mesh-condition-critical}, $h_n^{\kappa_H}\log n\to0$, so
the right-hand side is $2n^{-a_n}$ with $a_n\to\infty$, which is summable.
Borel--Cantelli with $\varepsilon=1/j$, intersected over $j\in\mathbb N$,
gives \eqref{eq:critical-asymptotic-determinism}. Combining it with
\eqref{eq:critical-mean-bounds} gives \eqref{eq:critical-liminf-limsup}.

Finally, $p^\pi(X^H)\le p$ by Proposition~\ref{prop:deterministic-upper}.
For $1\le q<p$, on the event \eqref{eq:critical-liminf-limsup},
\[
  [X^H]^{(q)}_{(\pi_n)}
  \ge\frac{[X^H]^{(p)}_{(\pi_n)}}{\omega_{X^H}(h_n)^{p-q}}
  \ge\frac{c_H/2}{L_H^{p-q}h_n^{(p-q)H}}\longrightarrow\infty
\]
for all large $n$, so $p^\pi(X^H)\ge p$ by \eqref{eq : limsup q variation}.
\end{proof}

\begin{proof}[Proof of Corollary~\ref{cor:first-moment}]
(i)$\Rightarrow$(ii) is trivial. If (ii) holds, then $S_n$ converges in
probability and, by \eqref{eq:critical-asymptotic-determinism},
$\E S_n=S_n-(S_n-\E S_n)$ converges in probability; a deterministic
sequence converging in probability converges, which is (iii). If (iii)
holds, \eqref{eq:critical-asymptotic-determinism} gives
$S_n\to\lim_n\E S_n$ almost surely, which is (i) together with the
identification of the limit.
\end{proof}

\begin{remark}
Lemma~\ref{lem:bounded-differences} applies to arbitrary finite
partitions, so Theorem~\ref{thm:critical-concentration} requires neither
balance nor a bound on the number of partition intervals. At $H=1/2$,
the same bounded-differences argument gives almost-sure convergence of
the centered quadratic sums to zero under
\[
  \mesh(\pi_n)\log(e/\mesh(\pi_n))=o(1/\log n).
\]
This sufficient condition is stronger than the one in
Theorem~\ref{thm:sharp-logarithmic-mesh}(a). The Hanson--Wright argument
used there exploits the quadratic structure and removes the extra
logarithmic factor.
\end{remark}

\noindent{\bf Acknowledgements.}
The authors gratefully acknowledge financial support from the Heilbronn Institute for Mathematical Research for an International Visitor Award, which facilitated the research stay of the second author at KCL during which this paper was written. A.S.~ furthermore acknowledges financial support from Munich Re and from the Natural Science
and Engineering Research Council of Canada through grant  RGPIN-2024-03761.

The authors used OpenAI's Codex to assist with reviewing proofs,
identifying possible gaps, and exploring alternative mathematical
arguments. The authors evaluated the suggestions and take full
responsibility for the results, proofs, and final text of the paper.

\bibliographystyle{siam}

\bibliography{ref}

\end{document}